%% file: Separation_from_order_and_oriented_maps_output.tex
\documentclass{article}
\usepackage{arxiv}
\input{header}
\input{preamble.sty}
\title{Convexity from order: a theory of faces and generalized separation}
\author{
  Matthew S. Scott \\
  Department of Mathematics \\
  University of British Columbia \\
  Vancouver, BC, Canada \\
  \texttt{matthewscott@math.ubc.ca}
}
\begin{document}
\maketitle
\begin{abstract}
It is known that any convex cone in a general vector space is the positive cone of a preorder compatible with the vector space,
and that any convex set is an affine slice of a convex cone.
We therefore treat convex sets order-theoretically, as affine slices of positive cones.
We find that faces are sets of strong minima,
and we generalize functionals to oriented maps valued in arbitrary ordered vector
spaces. Oriented maps expose every face, and an oriented affine map $f$
separates $C$ from $D$ when $f(D) \le 0 \le f(C)$ element-wise.
This generalization of hyperplane separation recovers mathematical objects that
are ``local", including the generated face at a point and the normal cone. The kernels of separating oriented
maps form a lattice of separating affine flats whose bottom element is the joint
supporting subspace (JSS), recently introduced in finite dimensions for
qualification-free convex analysis results.
We define the JSS in general vector spaces
for two convex sets $C, D$ as the affine slice of the cross-lineality $\lin(K-G)$,
where $K,G$ are the homogenization cones of $C, D$.
We show that in finite dimensions, bilateral facial reduction corresponds to a descent 
in the lattice of separating affine flats, which we formalize as lexicographic
products of oriented maps. The lexicographic characterization of faces follows as a corollary.

By making no assumption other than convexity, our approach yields general results: we
show that the face of a difference of two convex sets is a difference of faces, dropping
assumptions of compactness, finite dimensionality, and exposedness.
We also show that any face of an intersection of two convex sets is an intersection
of faces, resolving open problem 5.10 by Weis in ``A note on faces of convex sets",
dropping the assumption of non-empty intrinsic core.
\end{abstract}
\keywords{convex sets \and convex cones \and faces \and generated faces \and ordered vector spaces \and separation \and facial reduction \and joint supporting subspace}
% \msc{52A05 (Primary), 06F20, 90C25 (Secondary)}
\section{Introduction}
\label{loc:body.introduction}
There has recently been renewed interest in understanding convex sets in general vector spaces without topology~\cite{novoRelativelySolidConvex2021, millanIntrinsicCoreMinimal2023, weisNoteFacesConvex2025}.
In these works, assumptions of non-empty intrinsic core (also known as the algebraic relative
interior) are used to guarantee separation via
the algebraic Hahn-Banach theorem, or to guarantee the existence of a point generating 
a face~\cite{weisFaceGeneratedPoint2021}.
Such assumptions are far from harmless: in infinite dimensions, convex
sets with empty interior are commonplace, arising for instance from
positivity constraints in function spaces~\cite[Example 2]{weisNoteFacesConvex2025}.

Can anything be said about the geometry of convex sets in general vector spaces,
with no assumptions other than convexity?
A hint for the affirmative comes from the theory of ordered vector spaces. 
Preorders that are compatible with the vector space structure 
(preserved under translation and positive scalar multiplication) are known
to be characterized by their positive cone, which may be any convex cone.
It was shown by~\textcite{lewisFacialReductionPartially1994} that
a convex cone that is the positive cone of a vector lattice has all faces identified by
intersections with the order ideals of that lattice.
More recently, \textcite{bruynTensorProductsConvex2020} shows this fact 
for general convex cones, showing additional structure in their relation
with positive maps (order preserving linear maps) from the theory of
ordered vector spaces~\cite{bonsallSublinearFunctionalsIdeals1954}. 
The comparative generality of this theory relative to analogues for
convex sets signals a gap in the literature, made glaring by the close relationship of the two objects.
Our first contribution is to fill this gap by extending the order-theoretic theory of faces from convex cones to convex sets, by using the fact that arbitrary convex sets can be lifted
to slices of convex cones in higher dimensions, which are known as
``homogenization cones" in the literature~\cite{bauschkeHomogenizationConePolar2023}.
We use the term ``dehomogenization" to describe the procedure of inducing results for convex sets by modelling them as slices of convex cones.
Our approach has the benefit of importing the semantics of order
to describe the geometry of convex sets, 
which proves to be powerful even while describing the a priori purely
algebraic objects that are convex sets.
For example, in~\Cref{loc:characterization_of_the_faces_of_convex_sets.statement}
we characterize the non-empty faces of convex sets as sets of
minima for arbitrary compatible preorders in the vector space.
Specifically, we use a strong notion of minima, where an element is in the
minimum set of $C$ when it is dominated by all other elements in $C$.

More fundamentally, the structure from which our results emerge is an alternative axiomatization of convex sets: convex cones can be defined by their being positive cones of compatible preorders, and convex sets as affine slices of these positive cones. Convex sets can thus be defined via compatible preorders instead of convex combinations. The convexity of cones, which algebraically is closure under addition, is instead captured in the transitivity of the compatible preorder. This axiomatization offers a perspective where convex sets are ``positive regions" in affine spaces. From this perspective, the application of order-theoretic language to convex geometry becomes very natural.

For our central tool we introduce the oriented linear map, which is a linear map $\psi:W \to (Y, Y_+)$
from a vector space $W$ to an ordered vector space $(Y, Y_+)$,
where $Y_+ \subseteq Y$ is a pointed convex cone specifying a partial order in $Y$.
Oriented maps are a variation on positive maps from the theory of ordered vector
spaces, differing only in the absence of a specified order in the domain.
This variation allows for dehomogenization by restricting an oriented map to an
affine subspace $U \subseteq W$, which yields an oriented affine map $\psi|_U:U \to (Y, Y_+)$.
The oriented affine maps that are positive on a given convex set then play a role similar
to that of the positive maps for the positive cone in an ordered vector space.

Functionals $\psi:W \to \mathbb{R}$ in convex geometry serve to
expose faces of convex sets and also to separate convex sets
with hyperplanes given by their level sets.
They can themselves be understood as oriented maps, since the oriented map 
$\psi:W \to (\mathbb{R}, \mathbb{R}_+)$ is a functional with the canonical
order in $\mathbb{R}$ made explicit. 
We thus extend the class of ``exposing" maps from functionals to oriented maps,
which, as we show in~\Cref{loc:characterization_of_the_faces_of_convex_sets.statement}, make any face exposed. The classical
``exposed" faces are then those faces that have an oriented functional
among their exposing oriented maps. One way to understand the present work is as the theory resulting from extending the class of functionals to maps with codomains of higher dimensions.

Classically, a functional $\psi:X \to \mathbb{R}$ separates two convex sets $C, D \subseteq X$
when their set-images are pointwise ordered in the codomain:
\begin{equation*}
\forall c \in C, d \in D,  \psi(d) \le \psi(c),
\end{equation*}
which we write as $\psi(D) \le \psi(C)$. Then there exists $b \in \mathbb{R}$ such that $\psi(D) \le b \le \psi(C)$, and $\psi^{-1}(b)$ gives a
hyperplane separating $C$ from $D$.
By letting $\psi$ be instead a general oriented map, the above formula
becomes a generalized notion of separation.
We adopt the convention of requiring $b = 0$ with $\psi$ an affine map, in which case an affine
map $\psi:X \to (Y, Y_+)$ separates $C$ from $D$ when $\psi(D) \le 0 \le \psi(C)$.
We also make no requirement that $C$ and $D$ be disjoint, allowing 
for a theory that turns out to be richest in the case where two convex sets intersect on their
boundaries. The kernels of separating oriented affine maps form a class
of affine flats which we call separating flats, which generalize separating hyperplanes.
We show separating flats admit a number of independent characterizations in \Cref{loc:characterization_of_the_separating_flats.statement}.
The separating flats form a lattice with a bottom element, which is non-trivial
when the two convex sets intersect.
We show that this minimum separating flat is the joint supporting subspace (JSS), recently introduced in~\cite{scottBilateralFacialReduction2026},
where it is used to specify qualification-free generalizations of core duality results in convex analysis. 

Through our cone-first approach of dehomogenization, we find that the
JSS is an affine slice of $\lin(K-G)$, an object that we name the cross-lineality.
The recognition of the role of the cross-lineality is novel even in finite
dimensions, and extends the concept of the JSS to general vector spaces. 
We recover additional related results from~\cite{scottBilateralFacialReduction2026} 
with greater motivation, generality, and simpler proofs, including
the iterative procedure of bilateral facial reduction that computes the JSS in finite dimensions~\cite{scottBilateralFacialReduction2026}.
Bilateral facial reduction was independently
discovered in~\cite{linFacialReductionNice2025} where the procedure is shown to identify a ``minimal pair of faces" with guarantees that some qualification conditions hold. This pair of faces turns out to be the
intersections of the JSS with the two convex sets.
We show that these faces are indeed a ``minimal pair" in a precise sense:
they are the faces of the two convex sets $C,D$ that are generated by the intersection $C \cap D$.
This motivates our definition of the generated face $F_C(D)$ (\Cref{loc:characterization_of_the_generated_face_of_a_convex_set.statement}), where $D$
is an arbitrary convex set not necessarily contained in $C$. We thereby generalize the
classical concept of a face generated at a point,
which was recently explored as a tool in a distinct line
of work for the analysis of geometry of convex sets in general vector
spaces~\cite{weisFaceGeneratedPoint2021, weisNoteFacesConvex2025, millanIntrinsicCoreMinimal2023}.
We show that the face generated at a point is recovered from our theory of separation, when separating a
singleton $\{x\}$ from a convex set $C$ that contains it. In this special case,
the connection of the generated face with the JSS recovers \cite[proposition 2.4]{millanIntrinsicCoreMinimal2023},
a recent characterization of the
generated face. We further show in~\Cref{loc:characterization_of_the_generated_face_of_a_convex_set.statement} that for these two convex sets (and indeed for any
pair of convex sets), there is a finest
separating preorder, and that the generated face $F_C(D)$ is characterized as
the minimum set in $C$ (those points in $C$ dominated by every other in $C$)
in the finest preorder separating $C$ from $D$.

The separation of an element from its containing convex set seems to
subsume the concept of ``locality" in convex geometry.
For example, the normal cone $N_C(x)$ 
corresponds to the set of oriented maps with codomain $(\mathbb{R}, \mathbb{R}_-)$
that separate $C$ from $\{x\}$. 
Similarly, we show in~\Cref{loc:body.applications.facial_reduction_and_lexicographic_maps} that the explicitly local concepts of
supporting subspaces and nested normals in~\cite{scottBilateralFacialReduction2026} are
recovered from separation.

Using our order-theoretic tools, we derive three novel geometric results, each drawing on a different part of the framework:
\begin{itemize}
\item \emph{Faces of intersections.} Every face of $C \cap D$ is the intersection of a face of $C$ and a face of $D$. By showing this fact, we resolve open problem 5.10 posed by~\textcite{weisNoteFacesConvex2025}, by dropping the assumption of non-empty intrinsic core. This assumption was required in~\cite{weisNoteFacesConvex2025} because of the use of generated faces at a point: the non-empty intrinsic core guarantees the existence of a point generating the given face, which can then be used to generate faces for the individual convex sets $C,D$. Our theory of generated faces sidesteps this difficulty by instead obtaining generated faces as minimum sets in the finest separating preorder. The crucial step is in recognizing that the intersection of faces can be obtained by the intersection of the associated finest separating preorders.
\item \emph{Faces of differences and sums.} Every face of the difference $C - D$ is the difference of a face of $C$ and a face of $D$ (\Cref{loc:decomposition_of_faces_of_the_difference_of_convex_sets.statement}), and correspondingly for Minkowski sums. This removes the finite-dimensionality, compactness, and exposedness assumptions of~\cite[Theorem 1.7.5]{schneider_ConvexBodiesBrunn_2013}, though that result was presented in the context of a larger theory, with no particular emphasis on generality. Our result follows from the elementary fact that, in any fixed preorder, $\min (C-D) = \min(C) - \max(D)$.
\item \emph{Lexicographic characterization of faces.} We formalize bilateral facial reduction introduced in~\cite{scottBilateralFacialReduction2026} as a procedure iterating over separating oriented maps, and show that the lexicographic characterization of faces of~\textcite{martinez-legaz_LEXICOGRAPHICALCHARACTERIZATIONFACES_} follows as a corollary, bridging two previously distinct literatures.
\end{itemize}

The original motivation of this theory was to explain the structure of the JSS, establish it as a canonical object, and to allow its extension to 
general vector spaces by removing artificial assumptions. We achieved this goal in~\Cref{loc:affine_cross:lineality.statement}, providing both an intrinsic characterization as the dehomogenization of the cross-lineality, and an extrinsic one as the minimum separating flat. We arrive at this result
by combining a small number of ingredients: dehomogenization, separation, minimality in the separation lattice, and the use of oriented maps. There is very little choice in these ingredients; for example, as we argue in~\Cref{loc:body.separation}, even the consideration of separation emerges from the dehomogenization of a single general convex cone.  
Each ingredient forms an orthogonal axis along which other mathematical objects are found. We opted to provide a comprehensive description of the emerging landscape, characterizing each object along the way. In this manner we build a unifying framework, which in retrospect we consider to be our core contribution. 
For each object we provide multiple characterizations. Some characterizations specify relations to other, immediately neighbouring objects in the grid. Others bridge with definitions in the literature, sometimes in many different traditions. And others finally are novel, aiming at intuitive clarity and simplicity.
We describe the four orthogonal axes in greater detail, since they organize the contents of this work. 
One axis is homogenization, between convex cones and convex sets; we provide our results both for convex cones and convex sets. A second axis goes across oriented maps, their kernels, and the faces that they expose.
A third axis corresponds to restrictions on available
oriented maps: positivity on one convex cone (or convex set) in \Cref{loc:body.positivity_minimization_and_faces}, and then an additional
requirement of negativity on another, which gives separation, in \Cref{loc:body.separation}. 
Finally, we consider minima in the separation lattice in \Cref{loc:body.lattice_minima}, which includes the JSS and generated faces.
The objects across the four axes constitute the core of our framework, 
while the geometric results that follow in~\Cref{loc:body.applications} are applications of the framework.
\subsection{Prior works}
\label{loc:intro_prose_order_convex_sets.concise_attack.prior_works}
The use of order for the characterization of faces of convex sets 
previously appeared in the lexicographic characterization of the faces of convex sets~\cite{martinez-legaz_LEXICOGRAPHICALCHARACTERIZATIONFACES_}, where faces are characterized by the minimization of lexicographic orders. This method characterizes faces in finite dimensions, and encodes in a single order the sequential maximization of functionals.
The proof uses the fact that an exposed face of an exposed face is a face. \textcite{martinez-legaz_LEXICOGRAPHICALCHARACTERIZATIONFACES_} moreover shows that any face is attained by this recursive procedure.

The lexicographic characterization of faces was recently extended to general vector spaces~\cite{gorokhovikCharacterizationsFacesConvex2026},
where the orders are required to be total preorders. Our work shows that totality is inessential, since the minimization of any preorder yields a face.
Total orders are then
a special case --- a subset of the compatible preorders that on its own still suffices
to expose every face. This makes the total-order characterization stronger in the sense that all faces are attained from a narrower class of orders. 
The total order characterization is also the more complex, since constructing
a witnessing order for arbitrary faces requires the Kuratowski--Zorn lemma,
whereas our witnessing preorders are explicit and choice-free.

Lexicographic maps were originally used because of their unique separation properties~\cite{martinez-legazLexicographicalSeparationRn1987},
which were more recently generalized to arbitrary vector spaces via half-spaces~\cite{gorokhovikStepAffineFunctionsHalfspaces2021}.
This separation is different in kind from our
notion of separation, in that it provides a ``true" separation, containing
disjoint convex sets in two disjoint convex sets that partition the space, called half-spaces.
Unlike hyperplane separation, lexicographic separation crucially requires the disjointness of the convex sets.
Our approach is opposite: we generalize the formula of hyperplane separation
$f(D) \le f(C)$ while dropping the requirement of disjointness.
We caution that in~\Cref{loc:body.applications.facial_reduction_and_lexicographic_maps},
we derive lexicographic maps that separate two convex sets
in the sense of this work, which 
differs from lexicographic separation proper.

Vector optimization shares our central object---maps into ordered vector
spaces---but not our aim. Our minimum sets are named in \cite[Definition 2.4.4]{botDualityVectorOptimization2009} as the set of \emph{strongly minimal} elements, a stronger notion than the minimal, or Pareto-optimal, elements, which are merely undominated. The field 
sets aside the former in favour of the latter: ``For vector optimization this definition is of secondary importance because in the most practical cases strongly minimal elements do not exist"~\cite[p. 45]{botDualityVectorOptimization2009}. This caveat does
not apply to our use-case because we use preorders
as analytical tools instead of having a single preorder fixed from the problem
statement.
\section{Notation}
\label{loc:body.notation}
Throughout this work, we let the symbols $X$, $Y$ and $W$ denote general vector spaces,
with $W$ reserved for the spaces carrying convex cones, and $X$ for the spaces carrying convex sets.
A convex cone $K \subseteq W$ is a set closed under addition and positive
scalar multiplication. A convex cone may or may not contain the origin; we
adopt the convention that it does, unless
explicitly stated otherwise.
A convex cone is \emph{pointed} when it has trivial lineality $\lin(K):= K \cap -K$.

We denote $\mathbb{R}_{+} := \{x \in \mathbb{R} \mid x \ge 0\}$ and
$\mathbb{R}_{++} := \mathbb{R}_+  \setminus \{0\}$.
For $x, y \in W$, we denote by $[x, y]$ the line segment from $x$ to $y$,
which is $\{\lambda x + (1-\lambda) y \mid \lambda \in [0, 1],  x, y \in W\}$,
and by $]x, y[$ the segment with endpoints excluded.
For a set $S$, we write $\cone(S) := \{\lambda s \mid \lambda \ge 0,\ s \in S\}$, and $\convcone(S)$ for the smallest convex cone containing $S$.
For a map $f$, we denote by $f^{-1}$ its preimage, making no assumption
of invertibility. Given a set $S \subseteq \dom(f)$, we denote by
$f(S) := \{f(s)\mid s \in S\}$ the set-image.
\subsection{Compatible Preorders}
\label{loc:body.notation.compatible_preorders}
A binary relation $R \subseteq W \times W$ in a vector space $W$ is \emph{compatible} with
$W$ when it is invariant to translations and multiplication by scalars in
$\mathbb{R}_{++}$, that is,
\begin{equation*}
\forall x, y, z \in W, \lambda > 0, \quad (x,  y) \in R \implies (x + z, y+z) \in R \text{ and } (\lambda x, \lambda y) \in R.
\end{equation*}
When a binary relation is translation-invariant, we have $x \le y \iff 0 \le (y-x)$,
and the relation $R$ can be encoded in the positive cone $K := \{x \in W \mid x \ge 0\}$ (which a priori is not necessarily a cone),
since $(x, y) \in R \iff y-x \in K$. Following the convention
of ordered vector spaces, we use $K$ in place of $R$, saying 
``the binary translation-invariant relation $K \subseteq W$".
Scalar multiplication invariance further guarantees that $K$ be a cone,
one that may not contain the origin. The next result shows that the
convexity of $K$ as a cone is associated with the transitivity of the relation.
\begin{lemma}[Equivalence of convexity and transitivity]
\label{loc:equivalence_of_convexity_and_transitivity.statement}
A given compatible binary relation $K \subseteq W$ is transitive if and only if
$K$ is a convex cone, potentially without the origin.
\end{lemma}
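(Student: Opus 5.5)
The plan is to use the encoding $(x,y)\in R \iff y-x\in K$ and translate each property of the relation into a property of $K$. Compatibility already makes $K$ closed under multiplication by $\lambda>0$: if $0\le x$, i.e.\ $(0,x)\in R$, then $(0,\lambda x)\in R$. So $K$ is a cone, possibly without the origin. What remains is to show that transitivity of $R$ is equivalent to closure of $K$ under addition. Here "convex cone, potentially without the origin" is read as "closed under addition and under multiplication by $\mathbb{R}_{++}$".

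\emph{Transitivity implies additivity.} Take $a,b\in K$. Then $(0,a)\in R$, and translating $(0,b)\in R$ by $a$ gives $(a,a+b)\in R$. Transitivity applied to $(0,a)$ and $(a,a+b)$ yields $(0,a+b)\in R$, that is, $a+b\in K$.

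\emph{Additivity implies transitivity.} Suppose $(x,y),(y,z)\in R$, so $y-x\in K$ and $z-y\in K$. Additivity gives $z-x=(y-x)+(z-y)\in K$, hence $(x,z)\in R$.

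The only delicate point is bookkeeping rather than mathematics: we must confirm that the positive-scalar closure obtained from compatibility, together with additivity, is exactly the paper's notion of a convex cone once the convention about containing the origin is dropped. This is why the statement says "potentially without the origin". Reflexivity is not assumed, so $0\notin K$ is allowed; a strict order is the typical example. No earlier results are needed beyond the translation-invariance identity $x\le y\iff 0\le y-x$ recorded just before the lemma.
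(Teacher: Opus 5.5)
Your proof is correct and is essentially the paper's argument: for $a,b\in K$, translating $0\le b$ by $a$ gives $a\le a+b$, and transitivity with $0\le a$ yields $a+b\in K$; conversely, $z-x=(y-x)+(z-y)$ gives transitivity. Your observation that closure under positive scalars comes for free from compatibility matches the remark the paper makes just before the lemma.
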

\begin{proof}[\hypertarget{loc:equivalence_of_convexity_and_transitivity.proof}{}Proof of \Cref{loc:equivalence_of_convexity_and_transitivity.statement}]

Recall that a cone $K$ is convex when $x, y \in K \implies x + y \in K$,
and that in the relation $y < x \iff  x-y \in K$, we have $x \in K \iff x > 0$. We now show both implications in the statement for a given compatible binary relation $(>)$.

$(\implies)$:
Let $x > 0$ and $y > 0$. Translation invariance gives $x + y > y > 0$, and transitivity yields $x + y > 0$.

$(\impliedby)$:
Let $x > y > z$. By translation invariance, $x-y > 0$ and $y-z > 0$. 
Closure under addition gives $x-z = (x-y)+(y-z) > 0$, hence $x > z$.
\end{proof}
The last relevant axiom is reflexivity, which is that $0 \in K$. A reflexive,
compatible and transitive binary relation is a compatible preorder.
We also consider the non-reflexive case, where the positive cone
is a convex set without the origin. A compatible, transitive, yet non-reflexive 
binary relation is a strict order $(<)$. Each preorder $(\le)$ induces a strict
order: for a preorder $K \subseteq W$, we denote 
\begin{equation*}
x >_K y \iff x \ge_K y \; \land \; x \not \le_K y.
\end{equation*}
We also define an associated similarity relation 
\begin{equation*}
x \sim_K y \iff x \le_K y \land  x \ge_K y,
\end{equation*}
for which we have $\lin(K) = \{x \in W \mid x \sim_K 0\}$.
If a preorder $K$ is antisymmetric, in that $x \sim_K y \implies x = y$,
then $K$ is a partial order; geometrically, a pointed convex cone.

An ordered vector space is a vector space $W$ with a
distinguished preorder $K \subseteq W$, which we denote by the tuple $(W, K)$.
The choice of a single distinguished preorder is too restrictive
for our purposes: we instead consider a vector space $W$ which
contains many preorders $K \subseteq W$, none singled out.
In this work we only consider compatible preorders, and therefore any
given preorder will be understood to be compatible without explicit mention.
We extend order notation to sets elementwise: for $S, T \subseteq W$ and a
preorder $K \subseteq W$, we write $S \le_{K} T$ when
$s \le_{K} t$ for all $s \in S$ and $t \in T$, treating a vector as a
singleton. In particular, $S \ge_{K} 0$ means $S \subseteq K$.
\section{Positivity, minimization and faces}
\label{loc:body.positivity_minimization_and_faces}
A preorder can be expressed via an \emph{oriented map}.
\begin{definition}[Oriented maps]
\label{loc:oriented_maps.statement}
Let $W$ be a vector space and $(Y, Y_+)$ an ordered vector space,
where $Y_+ \subseteq Y$ is a pointed convex cone.
We define an \emph{oriented map} to be a map $\psi:W \to (Y, Y_+)$ that is linear
from $W$ to $Y$. 
\end{definition}
We say that an oriented map $\psi$ is \emph{positive} on a 
set $S \subseteq W$ when $\psi(S) \ge 0$. 
An oriented map $\psi$ can be thought of as encoding
the preorder $\psi^{-1}(Y_+)$ in its domain, the pullback of the partial order in its codomain.
\begin{proposition}[Preorders are characterized by oriented maps]
\label{loc:preorders_are_characterized_by_oriented_maps.statement}
For any preorder $P \subseteq W$, there is an oriented map $\psi:W \to (Y,Y_+)$ such that $\psi$ induces the order $P$, i.e., 
\begin{equation*}
\forall x,y \in W, x \le_{P}y \iff \psi(x) \le \psi(y),
\end{equation*}
or equivalently, $P = \psi^{-1}(Y_+)$.
One such oriented map $\psi$ inducing $P$ is the oriented quotient map 
\begin{equation*}
\pi_P:W \to (W/\lin(P),  P/\lin(P)).
\end{equation*}
\end{proposition}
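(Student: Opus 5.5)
We will verify directly that the oriented quotient map $\pi_P$ does the job, which proves existence. The plan has three steps. First, check that the codomain $(W/\lin(P), P/\lin(P))$ is a legitimate ordered vector space in the sense of \Cref{loc:oriented_maps.statement}, i.e.\ that $P/\lin(P)$ is a pointed convex cone. Second, note that $\pi_P$ is linear, so it is an oriented map. Third, show that $P = \pi_P^{-1}(P/\lin(P))$. By translation invariance, $x \le_P y \iff y - x \in P$, and by linearity $\pi_P(x) \le \pi_P(y) \iff \pi_P(y-x) \in P/\lin(P)$. So the claimed equivalence for all $x,y$ reduces to the set identity $P = \pi_P^{-1}(P/\lin(P))$.

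For the first step, $\lin(P) = P \cap -P$ is a linear subspace. It contains $0$ since $P$ is reflexive, and it is closed under addition and positive scaling because $P$ is a convex cone by \Cref{loc:equivalence_of_convexity_and_transitivity.statement}. It is also closed under negation by definition, so the quotient makes sense. The image $P/\lin(P) = \pi_P(P)$ of a convex cone under a linear map is again a convex cone.

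Pointedness is where we must be careful. Suppose $\pi_P(p) = -\pi_P(q)$ for some $p,q \in P$. Then $p + q \in \lin(P)$, so $-(p+q) \in P$, and hence $-p = q + (-(p+q)) \in P$. This gives $p \in \lin(P)$, so $\pi_P(p) = 0$. Therefore $\lin(P/\lin(P))$ is trivial.

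The step I expect to need the most care is the preimage identity, specifically the inclusion $\pi_P^{-1}(\pi_P(P)) \subseteq P$; the reverse inclusion is trivial. Suppose $w \in W$ satisfies $\pi_P(w) = \pi_P(p)$ for some $p \in P$. Then $w = p + \ell$ with $\ell \in \lin(P) \subseteq P$, and closure of $P$ under addition gives $w \in P$. In other words, $P + \lin(P) = P$, which is the key fact making $P$ saturated with respect to the quotient. Combining this with the reduction in the first paragraph completes the proof.
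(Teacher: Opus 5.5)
Your proof is correct and is essentially the paper's: the paper also reduces the claim to the preimage computation $\pi_P^{-1}(\pi_P(P)) = P + \ker \pi_P = P + \lin(P) = P$. The one difference is how pointedness of $P/\lin(P)$ is obtained. The paper cites \Cref{loc:convex_cone_mapped_with_supporting_kernel_has_a_pointed_image.statement}, whereas you check it directly, and your check is exactly what that lemma's hypothesis requires, namely that $P \setminus \lin(P)$ is closed under addition, which the paper leaves implicit.
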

To show the above result, it suffices that $P/ \lin(P)$ be a pointed convex cone
(and hence a partial order). This is guaranteed by the next lemma.
\begin{lemma}[Convex cone mapped with supporting kernel has a pointed image]
\label{loc:convex_cone_mapped_with_supporting_kernel_has_a_pointed_image.statement}
Let $K \subseteq W$ be a convex cone and $\psi:W \to Y$ a linear map. Then
\begin{equation*}
\psi(K) \text{ is pointed} \iff K \setminus \ker \psi \text{ is
a convex cone without the origin}.
\end{equation*}
\end{lemma}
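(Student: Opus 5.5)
The plan is to reduce both sides of the equivalence to one statement about sums: that $\psi$ never sends two elements of $K \setminus \ker\psi$ to opposite nonzero vectors. Two preliminary observations do most of the work.

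First, $\psi(K)$ is a convex cone containing the origin, since it is the linear image of one. Hence being pointed means $\psi(K) \cap -\psi(K) = \{0\}$.

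Second, $K \setminus \ker\psi$ is always closed under multiplication by $\lambda > 0$. Indeed, $\lambda x \in K$, and $\psi(\lambda x) = \lambda \psi(x) \neq 0$ whenever $\psi(x) \neq 0$. It also never contains the origin. So the right-hand side is equivalent to closure under addition. By convexity of $K$ we always have $x + y \in K$, so that closure amounts to the implication
\begin{equation*}
x, y \in K,\ \psi(x) \neq 0,\ \psi(y) \neq 0 \implies \psi(x+y) \neq 0.
\end{equation*}
I treat the empty set as a convex cone without the origin, so the case $K \subseteq \ker\psi$ causes no trouble: there $\psi(K) = \{0\}$ is pointed and the implication holds vacuously.

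$(\implies)$: Suppose $\psi(K)$ is pointed, and let $x, y \in K \setminus \ker\psi$. If $\psi(x+y) = 0$, then $\psi(x) = -\psi(y)$. This places the nonzero vector $\psi(x)$ in $\psi(K) \cap -\psi(K)$, contradicting pointedness. Hence $x + y \in K \setminus \ker\psi$.

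$(\impliedby)$: Argue by contraposition. Suppose $v \neq 0$ lies in $\psi(K) \cap -\psi(K)$, and pick $x, y \in K$ with $\psi(x) = v$ and $\psi(y) = -v$. Both $x$ and $y$ lie in $K \setminus \ker\psi$. However, $x + y \in K$ has $\psi(x+y) = 0$, so $K \setminus \ker\psi$ is not closed under addition.

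No step is a real obstacle. The only care needed is the bookkeeping above: positive scaling is automatic, the origin is automatically excluded, and the degenerate case $K \subseteq \ker\psi$ is covered by the empty-set convention. This lemma is then what Proposition~\ref{loc:preorders_are_characterized_by_oriented_maps.statement} needs. With $\psi = \pi_P$ and $\ker\pi_P = \lin(P)$, the set $P \setminus \lin(P)$ is the strict order $>_P$, which is transitive. By Lemma~\ref{loc:equivalence_of_convexity_and_transitivity.statement} it is therefore a convex cone without the origin, so $P/\lin(P)$ is pointed.
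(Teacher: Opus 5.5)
Your proof is correct and is essentially the paper's argument without the order-theoretic layer. The paper invokes \Cref{loc:equivalence_of_convexity_and_transitivity.statement} to restate the right-hand side as transitivity of the strict relation $x <_a y \iff y-x \in K\setminus\ker\psi$, but both directions rest on your witnesses: two elements of $K\setminus\ker\psi$ with $\psi(x) = -\psi(y) \neq 0$, and preimages of $v$ and $-v$ whose sum lies in $\ker\psi$. Your closing remark also supplies a step the paper leaves implicit in \Cref{loc:preorders_are_characterized_by_oriented_maps.statement}, namely why $P\setminus\lin(P)$ is a convex cone without the origin.
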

\begin{proof}[\hypertarget{loc:convex_cone_mapped_with_supporting_kernel_has_a_pointed_image.proof}{}Proof of \Cref{loc:convex_cone_mapped_with_supporting_kernel_has_a_pointed_image.statement}]

Since the set $K \setminus \ker \psi$ is closed under multiplication by scalars in $\mathbb{R}_{++}$
and does not contain the origin,
by \Cref{loc:equivalence_of_convexity_and_transitivity.statement}
it is a convex cone without the origin if and only if the relation
$x <_a y  \iff y - x  \in K \setminus \ker \psi$ 
is transitive (in which case $>_a$ is a strict order).
Therefore, it suffices to show that
\begin{equation*}
(<_a) \text{ is transitive } \iff \psi(K) \text{ is a partial order }.
\end{equation*}

$(\impliedby):$
Note that by definition,
\begin{equation*}
x <_{a} y \iff \psi(x) \neq \psi(y) \land x \le_{K} y.
\end{equation*}
Now $x \leq_K y \implies \psi(x) \leq_{\psi(K)} \psi(y)$, which with $\psi(x) \neq \psi(y)$ and the partial order property of  $\psi(K)$ implies that  $\psi(x) <_{\psi(K)} \psi(y)$. This shows that
\begin{equation}
\label{eq:represent_a}
x <_{a} y \iff \psi(x) <_{\psi(K)} \psi(y) \land x \le_{K} y.
\end{equation}
Then the transitivity of $(<_a)$ holds from the transitivity of the r.h.s., 
combining the transitivity of $<_{\psi(K)}$ with the transitivity of $\leq_K$.

$(\implies):$
For $0 \leq_{\psi(K)} x \leq_{\psi(K)} 0$, there are $z_1, z_2 \in W$  such that $0 \leq_K z_1$ and $z_2 \leq_K 0$ with $\psi(z_1)=\psi(z_2)=x$. Make the assumption that $x \neq 0$, which we show results in a contradiction. Then $\psi(z_1) = \psi(z_2) \neq 0$, from which we have
$0 <_a z_1$ and $z_2 <_a 0$. By transitivity, we have $0 <_a z_1 <_a z_1 - z_2$, yet this contradicts $\psi(z_1 - z_2) = \psi(z_1)- \psi(z_2) = x-x = 0$. 
Therefore $x$ must be $0$, and this shows that $\psi(K)$ is a partial order.
\end{proof}
\begin{proof}[\hypertarget{loc:preorders_are_characterized_by_oriented_maps.proof}{}Proof of \Cref{loc:preorders_are_characterized_by_oriented_maps.statement}]

That $Y_+ := P/\lin(P)$ is a pointed cone follows from \Cref{loc:convex_cone_mapped_with_supporting_kernel_has_a_pointed_image.statement}.

That $\pi^{-1}(Y_+) = P$ follows from computing the preimage:
\begin{equation*}
\pi^{-1}(Y_+) = \pi^{-1}(\pi(P)) = P + \ker \pi = P + \lin(P) = P.
\end{equation*}
\end{proof}
We note that for a given convex cone $K \subseteq W$, and an oriented map
$\psi:W \to (Y, Y_+)$ that is positive on $K$, we can think of $(W, K)$
as a preordered vector space and $\psi:(W, K) \to (Y, Y_+)$ as an
order-preserving linear map, which is known as a positive map in the literature~\cite{bruynTensorProductsConvex2020}.
We use the notion of oriented maps because we will consider a wider
variety of sets in $W$ on which a given oriented map may or may not
be positive. Nonetheless, it is worth pointing out that positive maps
compose naturally, whereas oriented maps do not. Instead, oriented maps
can only be left-composed with positive maps, and right-composed with linear
maps.
\subsection{Positivity on convex cones}
\label{loc:body.positivity_minimization_and_faces.positivity_on_convex_cones}
Given a convex cone $K \subseteq W$ and an oriented map
$\psi:W \to (Y, Y_+)$ that is positive on $K$,
we have three induced objects of interest:
the induced preorder $\psi^{-1}(Y_+)$; the kernel $\ker \psi$; and the intersection $\ker \psi \cap K$, which will turn
out to be a face of $K$. As $\psi$ varies over all oriented maps positive on $K$,
each of these objects traces out a class which admits alternative characterizations.
The first class we have already discussed: by
\Cref{loc:preorders_are_characterized_by_oriented_maps.statement}, the induced
preorders are exactly the preorders $P$ such that $K \ge_{P} 0$.
We treat the other two: namely the faces and the supporting subspaces of $K$.
For convenience, let us fix the definition of faces from the kernels of positive oriented maps, to later show that this matches the classical definition for the faces of convex sets.
\begin{definition}[Face of convex cone]
\label{loc:face_of_convex_cone.statement}
For a convex cone $K \subseteq W$, the faces of $K$ are sets of the form 
\begin{equation*}
F = \ker \psi \cap K
\end{equation*}
for some oriented map $\psi:W \to (Y, Y_+)$ that is positive on $K$.
\end{definition}
In the following, recall that a minimum set of a convex set $S \subseteq W$ under a preorder $P \subseteq W$ is made of the elements in $S$ dominated by all of $S$, i.e.,
\begin{equation*}
\min_P S := \{ s  \in S \mid \forall s'  \in S, s \leq_{P} s'\}.
\end{equation*}
\begin{theorem}[Characterization of the faces of convex cones]
\label{loc:characterization_of_the_faces_of_convex_cones.statement}
For a convex cone $K \subseteq W$, a convex subcone $F \subseteq K$ is a face of $K$
when any of the following equivalent statements holds.
\begin{enumerate}
\item $\forall x, y \in K,  x + y \in F \implies x, y \in F.$
\item $\forall z \in F,  x \in K,  x \le_{K} z \implies x \in F$.
\item There is a preorder $P \subseteq W$ such that $K \ge_{P} 0$ and $F = \min_{P} K$.
\item There is an oriented map $\psi:W \to (Y, Y_+)$ that is positive on $K$ such that $F = K \cap \ker \psi$.
\end{enumerate}
\end{theorem}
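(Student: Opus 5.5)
The plan is to prove the cycle of implications $(1)\Rightarrow(2)\Rightarrow(3)\Rightarrow(4)\Rightarrow(1)$. Throughout, $F$ is a convex subcone of $K$ containing the origin.

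\emph{$(1)\Rightarrow(2)$.} Take $z\in F$ and $x\in K$ with $x\le_K z$, so $z-x\in K$. Then $x$ and $y:=z-x$ both lie in $K$ and $x+y=z\in F$. Statement (1) gives $x\in F$.

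\emph{$(2)\Rightarrow(3)$.} This is the constructive step and I expect it to be the main obstacle. The natural choice is the preorder
\begin{equation*}
P := K - F = \{k - f \mid k\in K,\ f\in F\}.
\end{equation*}
It is a convex cone containing $0$, because $K$ and $F$ are convex cones; hence it is a compatible preorder by \Cref{loc:equivalence_of_convexity_and_transitivity.statement}. Since $K\subseteq P$, we have $K\ge_P 0$.

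For $F\subseteq\min_P K$: given $f\in F$ and $s\in K$, we have $s-f\in K-F=P$, so $f\le_P s$.

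For the reverse inclusion: let $s\in\min_P K$. Testing against $0\in K$ gives $0-s\in P$, so $-s=k-f$ with $k\in K$ and $f\in F$. Then $s+k=f\in F$. Since $k\in K$, we get $s\le_K s+k=f$. Statement (2) applied to $z=f$ and $x=s$ then yields $s\in F$. The point to verify carefully is that this use of (2) is legitimate: $s\in K$ holds because $s$ is in the minimum set of $K$, and $f\in F$ by construction.

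\emph{$(3)\Rightarrow(4)$.} Given $P$, take the oriented quotient map $\psi=\pi_P$ from \Cref{loc:preorders_are_characterized_by_oriented_maps.statement}, which satisfies $P=\psi^{-1}(Y_+)$. Positivity on $K$ follows from $K\subseteq P$. For $s\in K$, the condition $s\in\min_P K$ implies $s\le_P 0$, since $0\in K$. Combined with $s\ge_P 0$ this gives $s\in\lin(P)=\ker\psi$. Conversely, if $s\in K\cap\ker\psi$, then $s\sim_P 0\le_P s'$ for every $s'\in K$, so $s$ is a minimum. Hence $F=\min_P K=K\cap\ker\psi$.

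\emph{$(4)\Rightarrow(1)$.} Suppose $x,y\in K$ and $x+y\in F$. Then $\psi(x),\psi(y)\in Y_+$ and $\psi(x)+\psi(y)=0$. Thus $\psi(x)\in Y_+\cap -Y_+=\{0\}$ by pointedness, so $x\in\ker\psi\cap K=F$, and likewise $y\in F$.

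This closes the cycle. Note that under (4), $F=\ker\psi\cap K$ is automatically a convex subcone of $K$, so the standing hypothesis causes no gap.
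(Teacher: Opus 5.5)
Your proof is correct and follows the paper's own cycle $(1)\Rightarrow(2)\Rightarrow(3)\Rightarrow(4)\Rightarrow(1)$ with the same witnesses: the preorder $P = K-F$ for $(2)\Rightarrow(3)$, the oriented quotient map $\pi_P$ for $(3)\Rightarrow(4)$, and pointedness of $Y_+$ for $(4)\Rightarrow(1)$. Your reverse inclusion in $(2)\Rightarrow(3)$ tests minimality against $0\in K$ to get $s\le_K f$ with $f\in F$, which is a cleaner special case of the paper's decomposition $f\le_K x\ge_F F$.
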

Points 1 and 2 connect the algebraic and order-theoretic definitions of faces with \Cref{loc:face_of_convex_cone.statement} stated as point 4. Point 3 provides direct intuition about the nature of faces: faces are the sets of minima of compatible preorders. The restriction to preorders such that $K \geq 0$ serves to exclude the empty set, since from \Cref{loc:face_of_convex_cone.statement} it is not a face of convex cones. We treat convex cones as specialized objects with their own structure, for which containment of the origin is essential, and not as special cases of convex sets (for which the empty set is a face).
We also note that Point 3 directly captures the intuition that faces are ``stuck to the side" of convex cones.
\begin{proof}[\hypertarget{loc:characterization_of_the_faces_of_convex_cones.proof}{}Proof of \Cref{loc:characterization_of_the_faces_of_convex_cones.statement}]

$1 \implies 2$: For $x \in K$ and $z \in F$ with $x \le_{K} z$, there is $y \in K$ such that $x + y = z$, and so
by the splitting property $x \in F$.

$2  \implies 3$: Take the preorder $P := K - F$. Then note that $K \ge_{P} 0$ and also $F \leq_{P} K$. Further, $F$ is the full minimum set because, as we show, any point that is less than $F$ is in $F$, that is, $\forall f  \in K, f \leq_{P} F  \implies f  \in F$. We show this by decomposing the inequality of $P=K-F$.
$\forall f \in K, f \leq_{P} F  \implies \exists x  \in W: f \leq_{K} x \geq_{F} F$. The second inequality implies that $x \in F$,
therefore the expression corresponds to $f \leq_{K} F$, which by point 2 guarantees that $f  \in F$.

$3 \implies 4$: For the preorder $P$ given by point 3, take the oriented quotient map as in \Cref{loc:preorders_are_characterized_by_oriented_maps.statement}. The minimum set is $K \cap \lin(P)$, $K \ge_{P} 0$ implies that
$0 \in \min_P K$, hence the minimum is the set of points that $P$ puts in equivalence with $0$, which is $\lin(P)$. Then since $\lin(P)$ is the kernel of the quotient map, point 4 follows.

$4 \implies 1$: Consider any $x, y \in K$ such that $x + y \in F$.
Then for the oriented map $\psi$ given by point 4, we have
$\psi(x),  \psi(y) \ge 0$ because $\psi(K) \ge 0$.
Additionally, since $x + y \in F \subseteq \ker \psi$,
we also have $\psi(x) + \psi(y) =  \psi(x + y) = 0$,
i.e., $\psi(x)= -\psi(y)$. From this,
it follows that $\psi(x) \sim \psi(y) \sim 0$,
and since $Y_+$ specifies a partial order in $Y$,
$\psi(x)= \psi(y)= 0$.
This shows that $x, y \in \ker \psi \cap K = F$.
\end{proof}
Since we define faces via the intersection $K \cap \ker \psi$, we find
it of interest to consider those subspaces that take the form $\ker \psi$.
\begin{definition}[Supporting subspace]
\label{loc:supporting_subspace_definition_from_oriented_maps.statement}
A supporting subspace of a convex cone $K \subseteq W$ is a subspace of the form
\begin{equation*}
U := \ker \psi
\end{equation*}
for some oriented linear map $\psi:W \to (Y, Y_+)$ that is positive on $K$.
\end{definition}
\begin{theorem}[Characterization of supporting subspaces]
\label{loc:characterization_of_supporting_subspaces.statement}
A supporting subspace $U \subseteq W$ of a cone $K \subseteq W$ is equivalently
\begin{enumerate}
\item A subspace $U \subseteq W$ such that $K \setminus U$ is a convex cone without the origin.
\item A subspace $U \subseteq W$ such that $\forall y \in W, z \in U, 0 \le_{K} y \le_{K} z  \implies y \in U$. 
\item $U=\ker \psi$ for some oriented map $\psi:W \to (Y, Y_+)$ that is positive on $K$.
\item $U = \{ x  \in W \mid x \sim_{P} 0\}$ for a preorder $P \subseteq W$ such that $K \geq_{P} 0$.
\item $U \subseteq W$ is a subspace such that $U \cap K \triangleleft K$.
\end{enumerate}
\end{theorem}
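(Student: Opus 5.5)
The plan is to prove the cycle $5 \implies 2 \implies 1 \implies 3 \implies 4 \implies 5$. Each step reduces to an earlier result: \Cref{loc:characterization_of_the_faces_of_convex_cones.statement} for faces, \Cref{loc:convex_cone_mapped_with_supporting_kernel_has_a_pointed_image.statement} for pointedness of images, and the quotient construction of \Cref{loc:preorders_are_characterized_by_oriented_maps.statement}. Point 3 coincides with \Cref{loc:supporting_subspace_definition_from_oriented_maps.statement}, so the cycle shows that each point characterizes supporting subspaces.

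\emph{$5 \implies 2$.} Let $0 \le_K y \le_K z$ with $z \in U$. Transitivity gives $z \in K$, so $z \in U \cap K$. Set $F := U \cap K$, a face of $K$ by assumption. Since $y \in K$ and $y \le_K z$ with $z \in F$, point 2 of \Cref{loc:characterization_of_the_faces_of_convex_cones.statement} gives $y \in F \subseteq U$.

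\emph{$2 \implies 1$.} The set $K \setminus U$ is invariant under multiplication by $\mathbb{R}_{++}$ and excludes $0$, so only closure under addition needs checking. Take $x, y \in K \setminus U$ and suppose $x + y \in U$. Then $0 \le_K x \le_K x + y$, so point 2 forces $x \in U$, a contradiction. Hence $x + y \in K \setminus U$.

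\emph{$1 \implies 3$.} Let $\psi : W \to W/U$ be the quotient map, so $\ker \psi = U$. Hypothesis 1 says exactly that $K \setminus \ker \psi$ is a convex cone without the origin. By \Cref{loc:convex_cone_mapped_with_supporting_kernel_has_a_pointed_image.statement}, the image $Y_+ := \psi(K)$ is then pointed; it is also a convex cone, being the linear image of one. Therefore $\psi : W \to (W/U, \psi(K))$ is an oriented map, it is positive on $K$ by construction, and its kernel is $U$.

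\emph{$3 \implies 4$.} Put $P := \psi^{-1}(Y_+)$. This is a compatible preorder with $K \ge_P 0$, because $\psi$ is positive on $K$. Moreover $x \sim_P 0$ holds iff $\psi(x) \in Y_+ \cap -Y_+ = \{0\}$, by pointedness, i.e. iff $x \in \ker \psi = U$.

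\emph{$4 \implies 5$.} The set $U = \lin(P)$ is a subspace. Since $K \ge_P 0$ and $0 \in K$, we have $0 \in \min_P K$, and therefore $\min_P K = K \cap \lin(P) = K \cap U$; this is the same computation as in the step $3 \implies 4$ of \Cref{loc:characterization_of_the_faces_of_convex_cones.statement}. Point 3 of that theorem then shows $U \cap K \triangleleft K$.

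The only step needing care is $1 \implies 3$. There the oriented map is built from the quotient by $U$, and one must check that the lemma applies to $K$ as a convex cone containing the origin. One must also confirm that $\psi(K)$ is a genuine pointed convex cone, which is what makes $(W/U, \psi(K))$ a legitimate codomain. The remaining steps are short order-theoretic manipulations.
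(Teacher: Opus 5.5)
Your proof is correct and is essentially the paper's own argument. You use the same cycle $1 \implies 3 \implies 4 \implies 5 \implies 2 \implies 1$ and the same ingredients: a linear map with kernel $U$ oriented by $\psi(K)$ through the pointed-image lemma (you take the quotient $W/U$ specifically), the induced preorder $\psi^{-1}(Y_+)$, the identification $U \cap K = \min_P K$, and the face and order-ideal arguments for the last two steps.
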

Point 1 corresponds to the definition of supporting subspaces in \cite{scottBilateralFacialReduction2026}, point 2 is the definition of the order ideals of $K$, and point 3 describes the relation of supporting subspaces to oriented maps (or a positive map, if we fix the order in $W$ to be $K$). We note that point 4 characterizes supporting subspaces as $\lin(P)$ for suitable preorders $P$, showing that supporting subspaces can be thought of as lineality spaces.
Point 5 shows that supporting subspaces are also characterized by the faces of $K$. The closest antecedent in the literature to this multifaceted characterization is \cite{bruynTensorProductsConvex2020} Appendix A, where the equivalence between supporting hyperplanes and order ideals is shown.
\begin{proof}[\hypertarget{loc:characterization_of_supporting_subspaces.proof}{}Proof of \Cref{loc:characterization_of_supporting_subspaces.statement}]

$1  \implies 3$: Consider a linear map $\psi:W \to Y$ with kernel $U$. By \Cref{loc:convex_cone_mapped_with_supporting_kernel_has_a_pointed_image.statement}, $\psi(K)$ is a pointed cone, hence $\psi:W \to (Y, \psi(K))$ forms an oriented map that is positive on $K$, with kernel $U$.

$3 \implies 4$: This holds for the induced preorder $P := \psi^{-1}(Y_+)$.

$4 \implies 5$: Note that $U \cap K = \{ x \in K \mid x \sim_{P} 0\} = \min_P K$, hence $U \cap K \triangleleft K$ by \Cref{loc:characterization_of_the_faces_of_convex_cones.statement} point 3.

$5 \implies 2$: Consider $U$ such that $(U \cap K) \triangleleft K$.
For any $y \in W$ and $z \in U$ with $0 \le_{K} y \le_{K} z$,
we have $y \in K$ and $z \in K \cap U$, and so
by \Cref{loc:characterization_of_the_faces_of_convex_cones.statement} point 2
it follows that $y \in K \cap U$, so $y \in U$.

$2  \implies 1$: Let $x_1,x_2 \in K \setminus U$. Let $z = x_1 + x_2$.
Note that $0 \le_{K} x_1$ and $x_2 \le_{K} z$, and therefore
by point $2$, $z \in U \implies x_1, x_2 \in U$, a contradiction. 
Therefore $K \setminus U$ is closed under addition, and it is
immediate that $\forall x \in K \setminus U, \forall \lambda >0, \lambda x \in K \setminus U$.
This shows that $K\setminus U$ is a convex cone without the origin.
\end{proof}
\subsection{Dehomogenization}
\label{loc:body.positivity_minimization_and_faces.dehomogenization}
Homogenization is the operation of taking a set $S \subseteq X$ and lifting it to the cone $\cone(S \times \{ 1 \}) \subseteq X  \times \mathbb{R}$.
We consider cones as primary and sets as derived, letting the vector space $X$ be embedded as an affine slice $X  \times \{ 1 \}$ in the larger vector space $X  \times \mathbb{R}$. We denote $\gamma_1: X \to X  \times \mathbb{R}$ the map embedding $X$ to $X  \times \{ 1 \}$. A \emph{homogenization cone} is a cone $K$
that is contained in $(X \times \mathbb{R}_{++}) \cup \{ 0 \}$. We take
an affine slice of a homogenization cone with the preimage 
$S := \gamma_1^{-1}(K) \subseteq X$. Every homogenization cone is recovered from its slice.
\begin{lemma}[Dehomogenization is a bijection between sets and homogenization cones]
\label{loc:dehomogenization_is_a_bijection_between_sets_and_homogenization_cones.statement}
The set-valued map $\gamma_1^{-1}$ is a bijection from the set of homogenization
cones to the subsets of $X$. Moreover, $\gamma_1^{-1}$ has inverse $\cone\circ\gamma_1$.
\end{lemma}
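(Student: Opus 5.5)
We prove the statement by checking the two compositions directly: $\gamma_1^{-1}\circ(\cone\circ\gamma_1)$ is the identity on subsets of $X$, and $(\cone\circ\gamma_1)\circ\gamma_1^{-1}$ is the identity on homogenization cones. Before that, we confirm that $\cone\circ\gamma_1$ actually lands in the class of homogenization cones. For any $S\subseteq X$, the set $\cone(S\times\{1\})=\{(\lambda s,\lambda)\mid \lambda\ge 0,\ s\in S\}$ is closed under positive scalar multiplication. It contains the origin, using the convention of $\cone$ with $\lambda\ge 0$; if $S=\emptyset$ we read $\cone(\emptyset)=\{0\}$. Every nonzero element has last coordinate $\lambda>0$, so the set lies in $(X\times\mathbb{R}_{++})\cup\{0\}$.

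\emph{First composition.} We compute $\gamma_1^{-1}(\cone(S\times\{1\}))=\{x\in X\mid (x,1)=(\lambda s,\lambda)\text{ for some }\lambda\ge0,\ s\in S\}$. Comparing last coordinates forces $\lambda=1$, and then $x=s\in S$. Conversely, each $s\in S$ gives $(s,1)$ in the cone. So this composition returns $S$.

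\emph{Second composition.} Let $K$ be a homogenization cone and put $S=\gamma_1^{-1}(K)=\{x\mid (x,1)\in K\}$. The inclusion $\cone(S\times\{1\})\subseteq K$ holds because $(s,1)\in K$, $K$ is closed under positive scaling, and $0\in K$. For the reverse inclusion, take $(x,t)\in K$. If $(x,t)=0$, it lies in $\cone(S\times\{1\})$ by the convention above. Otherwise $t>0$ by the homogenization hypothesis, so $(x/t,1)=t^{-1}(x,t)\in K$. Hence $x/t\in S$ and $(x,t)=t\,(x/t,1)\in\cone(S\times\{1\})$.

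Since the two maps are mutually inverse, $\gamma_1^{-1}$ is a bijection with inverse $\cone\circ\gamma_1$. Convexity plays no role anywhere, because the statement concerns arbitrary cones and arbitrary subsets. The only point needing care is the origin together with the empty set: $\{0\}$ must correspond to $S=\emptyset$, which relies on the stated convention that cones contain the origin and that $\cone(\emptyset)=\{0\}$. We make this explicit in the argument rather than treat it as routine.
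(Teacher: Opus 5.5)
Your proof is correct and follows the paper's route: you check that both $\gamma_1^{-1}\circ(\cone\circ\gamma_1)$ and $(\cone\circ\gamma_1)\circ\gamma_1^{-1}$ are identities, using the height-one coordinate for the first and rescaling nonzero points of $K$ to height one for the second. Your explicit convention $\cone(\emptyset)=\{0\}$ is a genuine small improvement. The paper writes the origin as $0\cdot\gamma_1(c)$ with $c\in\gamma_1^{-1}(K)$, which silently needs $\gamma_1^{-1}(K)\neq\emptyset$ and so skips the case $K=\{0\}$.
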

We leave \hyperlink{loc:dehomogenization_is_a_bijection_between_sets_and_homogenization_cones.proof}{proof} to the appendix.

Convexity of a set in $X$ is equivalent to convexity of its homogenization cone.
\begin{lemma}[Dehomogenization is a lattice isomorphism between convex sets and homogenization cones]
\label{loc:dehomogenization_is_a_lattice_isomorphism_between_convex_sets_and_homogenization_cones.statement}
Let $C \subseteq X$ and $K := \cone\circ\gamma_1(C)$. Then
\begin{equation*}
C \text{ is convex} \iff K \text{ is convex}.
\end{equation*}
Hence the set-valued map $\gamma_1^{-1}(\cdot)$ restricts to a lattice isomorphism from the convex homogenization cones in $X  \times \mathbb{R}$ to the convex sets in $X$, with inverse $\cone \circ \gamma_1$.
\end{lemma}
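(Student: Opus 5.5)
The plan is to prove the equivalence directly and then read off the lattice statement from \Cref{loc:dehomogenization_is_a_bijection_between_sets_and_homogenization_cones.statement}. The main tool is the explicit description of the homogenization cone. Unwinding $\cone(S) = \{\lambda s \mid \lambda \ge 0, s\in S\}$, we get
\begin{equation*}
K = \{(\lambda c, \lambda) \mid \lambda \ge 0,\ c \in C\} = \{0\} \cup \{(\lambda c,\lambda) \mid \lambda>0,\ c\in C\},
\end{equation*}
and $\gamma_1^{-1}(K) = C$. Since $K$ is closed under $\mathbb{R}_{+}$-scaling by construction, convexity of $K$ amounts to closure under addition.

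$(\impliedby)$: Let $c_1, c_2 \in C$ and $t \in [0,1]$. The points $t(c_1,1)$ and $(1-t)(c_2,1)$ lie in $K$, so by additivity their sum $(tc_1 + (1-t)c_2, 1)$ lies in $K$. Its last coordinate is $1$, so it lies in $\gamma_1(X)$. Hence $tc_1+(1-t)c_2 \in \gamma_1^{-1}(K) = C$.

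$(\implies)$: Take $(\lambda_1 c_1, \lambda_1), (\lambda_2 c_2,\lambda_2) \in K$.
\begin{itemize}
\item If one of $\lambda_1,\lambda_2$ is zero, that summand is the origin and there is nothing to show.
\item Otherwise set $\lambda = \lambda_1+\lambda_2 > 0$ and $t = \lambda_1/\lambda \in\, ]0,1[$. The sum equals $\lambda\,(tc_1 + (1-t)c_2, 1)$. This lies in $K$ because $tc_1+(1-t)c_2 \in C$ by convexity of $C$.
\end{itemize}
The only point needing care is this rescaling to a convex combination, together with the degenerate case $\lambda_i = 0$. The latter is why the description of $K$ must record that its only element with last coordinate $0$ is the origin.

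For the lattice claim, \Cref{loc:dehomogenization_is_a_bijection_between_sets_and_homogenization_cones.statement} gives a bijection $\gamma_1^{-1}$ with inverse $\cone\circ\gamma_1$. Every homogenization cone $K$ equals $\cone\circ\gamma_1(\gamma_1^{-1}(K))$. Therefore the equivalence just proved shows the bijection restricts to one between convex homogenization cones and convex subsets of $X$.

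Both $\gamma_1^{-1}$ and $\cone\circ\gamma_1$ are monotone for inclusion: preimages and images preserve $\subseteq$, and so does $\cone$. An inclusion-preserving bijection whose inverse also preserves inclusion is an order isomorphism, so it preserves whatever meets and joins exist; this makes it a lattice isomorphism. Concretely, meets on both sides are intersections, and $\gamma_1^{-1}$ commutes with intersections. The joins, namely convex hull and the convex cone generated within homogenization cones, then correspond automatically.
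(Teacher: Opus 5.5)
Your proof is correct and follows the paper's argument essentially step for step. You use the same rescaling identity $\lambda_1(c_1,1)+\lambda_2(c_2,1)=(\lambda_1+\lambda_2)\,(tc_1+(1-t)c_2,1)$ for the forward direction, the same height-one sum $t(c_1,1)+(1-t)(c_2,1)$ for the converse, and the same appeal to \Cref{loc:dehomogenization_is_a_bijection_between_sets_and_homogenization_cones.statement} plus monotonicity for the lattice isomorphism. Your remark that the inverse $\cone\circ\gamma_1$ must also preserve inclusion, so that the bijection is an order isomorphism, slightly sharpens the paper's one-line justification, which only notes that $\gamma_1^{-1}$ preserves containment.
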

\begin{proof}[\hypertarget{loc:dehomogenization_is_a_lattice_isomorphism_between_convex_sets_and_homogenization_cones.proof}{}Proof of \Cref{loc:dehomogenization_is_a_lattice_isomorphism_between_convex_sets_and_homogenization_cones.statement}]

Let $C \subseteq X$ and $K := \cone\circ\gamma_1(C)$. Any two non-zero elements of $K$ can be formulated as $\lambda(x,1)$, $\mu(y,1)$ with $x,y \in C$ and $\lambda,\mu \in \mathbb{R}_{++}$. With $t := \frac{\lambda}{\lambda+\mu} \in ]0, 1[$,
\begin{equation*}
\lambda(x,1) + \mu(y,1) = (\lambda+\mu)\bigl(tx + (1-t)y,\ 1\bigr).
\end{equation*}

$(\Rightarrow)$ If $C$ is convex, the right-hand side lies in $K$, so $K$ is closed under addition, hence convex.

$(\Leftarrow)$ If $K$ is convex and $x,y \in C$, $t \in ]0,1[$, then $(tx+(1-t)y,\ 1) = t(x,1)+(1-t)(y,1) \in K$, so $tx+(1-t)y \in C$.

By \Cref{loc:dehomogenization_is_a_bijection_between_sets_and_homogenization_cones.statement}, $\gamma_1^{-1}$ and $\cone\circ\gamma_1$ pair $C$ with $K$. The equivalence therefore restricts that bijection to convex sets and convex homogenization cones. Both sides are lattices under inclusion and $\gamma_1^{-1}$ preserves containment, so the restriction is a lattice isomorphism.
\end{proof}
\begin{definition}[Oriented affine maps]
\label{loc:oriented_affine_maps.statement}
An oriented affine map is an affine map $f:X \to (Y, Y_+)$ from a vector space $X$ to a partially ordered vector space $(Y, Y_+)$, where $Y_+ \subseteq Y$ is a partial order.
\end{definition}
Given $x, y \in X \times \{1\}$ the difference $x-y$ lies in $X \times \{0\}$.
The space $X \times \{0\}$ therefore corresponds to the tangent space
of the affine space $X \times \{1\}$. We let $\gamma_0$ be the map embedding $X$
as $X \times \{0\} \subseteq X \times \mathbb{R}$.
The dehomogenization of oriented maps takes the form of a precomposition $f := \psi \circ \gamma_1$, as this corresponds to a restriction $\psi_{X  \times \{ 1 \}}$
up to identifying $X  \times \{ 1 \}$ with $X$.
We next specify this dehomogenization of oriented maps.
\begin{proposition}[Oriented linear maps are in bijection with oriented affine maps]
\label{loc:oriented_linear_maps_are_in_bijection_with_oriented_affine_maps.statement}
Let $X$ be a vector space and $(Y, Y_+)$ a partially ordered vector space.
The following three classes of objects are in bijection.
\begin{enumerate}
\item pairs $(A, b)$ of an oriented linear map $A: X \to (Y, Y_+)$ and a vector $b \in Y$;
\item oriented affine maps $f: X \to (Y, Y_+)$;
\item oriented linear maps $\psi: X \times \mathbb{R} \to (Y, Y_+);$
\end{enumerate}

They are in bijection via the following mutually inverse relations.
\begin{equation*}
f(x) = \psi \circ \gamma_1(x)= A(x) + b,
\end{equation*}
\begin{equation*}
A(x) = \psi \circ \gamma_0(x) = f(x) - f(0) ,
\end{equation*}
\begin{equation*}
b = f(0) = \psi(0, 1),
\end{equation*}
\begin{equation*}
\psi(x, \lambda) = A(x) + \lambda b = (f(x) - f(0)) + \lambda f(0).
\end{equation*}

Further, when $\ker f \neq \varnothing$, the offset $b$ is represented by any
fixed point $x_0 \in \ker f = A^{-1}(-b)$, in that
\begin{equation*}
f(x) = A(x - x_0), \qquad \psi(x, \lambda) = A(x - \lambda x_0).
\end{equation*}
\end{proposition}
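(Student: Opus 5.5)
We prove the statement by exhibiting the three pairs of maps between the classes and checking that they are mutually inverse; the order structure plays no role beyond carrying the codomain $(Y,Y_+)$ along unchanged. Since an oriented (affine or linear) map is just a (affine or linear) map into $Y$ with the order $Y_+$ attached, it suffices to prove the bijections at the level of plain affine and linear maps into $Y$.

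The plan is to take class 2 as the hub. Given an affine map $f$, set $A := f - f(0)$ and $b := f(0)$. Then $A$ is linear by the standard characterization of affine maps as translates of linear maps: $f(tx+(1-t)y) = tf(x)+(1-t)f(y)$ gives additivity and homogeneity of $f-f(0)$. Conversely, $x \mapsto A(x)+b$ is affine, and the two constructions are inverse to each other by direct substitution, since $(A+b)(0)=b$. This settles $1 \leftrightarrow 2$.

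For $1 \leftrightarrow 3$, map $(A,b)$ to $\psi(x,\lambda) := A(x)+\lambda b$, which is linear on $X\times\mathbb{R}$. In the other direction, map $\psi$ to the pair $(\psi\circ\gamma_0, \psi(0,1))$. The linearity decomposition $\psi(x,\lambda) = \psi(x,0)+\lambda\psi(0,1)$ shows that these two constructions are inverse to each other. Composing with the previous step gives $f = \psi\circ\gamma_1$, because $\psi(x,1) = A(x)+b$, and all displayed formulas follow by substitution.

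For the final claim, suppose $x_0 \in \ker f$. Then $A(x_0)+b=0$, so $b=-A(x_0)$, and $\ker f = A^{-1}(-b)$ by definition. Substituting gives $f(x) = A(x)-A(x_0) = A(x-x_0)$ and $\psi(x,\lambda) = A(x) - \lambda A(x_0) = A(x-\lambda x_0)$, using linearity of $A$. The only step that needs any care is verifying that $f-f(0)$ is linear when only affine-combination preservation is assumed. I would handle this by first obtaining homogeneity from $A(tx) = A(tx+(1-t)0) = tA(x)$, extending to all scalars $t$ via affine lines, and then deriving additivity from $A(\tfrac{x+y}{2}) = \tfrac12 A(x)+\tfrac12 A(y)$.
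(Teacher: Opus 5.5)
Your proposal is correct and follows essentially the same route as the paper: parametrize by the pair $(A,b)$, note that $A(x)+b$ and $A(x)+\lambda b$ exhaust the affine maps on $X$ and the linear maps on $X\times\mathbb{R}$ respectively, and obtain the final claim by substituting $b=-A(x_0)$. The only difference is that you spell out the standard facts the paper simply asserts, namely the linearity of $f-f(0)$ and the decomposition $\psi(x,\lambda)=\psi(x,0)+\lambda\psi(0,1)$.
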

\begin{remark}

The correspondence between oriented affine maps and
oriented linear maps constitutes a universal property of the
pair $(X \times \mathbb{R}, \gamma_1)$; that any affine oriented map out of $X$ is
expressed by a linear oriented map out of $X \times \mathbb{R}$:
\begin{equation*}
\forall f:X \to (Y,Y_+) \quad  \exists!\, \psi: X \times \mathbb{R} \to (Y, Y_+) \; : \quad f =
\psi \circ \gamma_1.
\end{equation*}
\end{remark}
In \Cref{loc:oriented_linear_maps_are_in_bijection_with_oriented_affine_maps.statement},
the case where $\ker f \neq \varnothing$ allows for simpler formulas: after fixing some $x_0 \in \ker f$,
the homogenization maps $\psi:X  \times \mathbb{R} \to (Y,Y_+)$ are in correspondence with the linear maps $A:X \to (Y,Y_+)$.
The act of choosing $x_0$ in \Cref{loc:oriented_linear_maps_are_in_bijection_with_oriented_affine_maps.statement} can be understood as effectively ``picking an origin" in $X$.
Indeed, $\gamma_1$ embeds $X$ as the affine space $X  \times \{ 1 \}$, hence $X$ can itself be thought of as an affine space. Its origin plays no special role other than in specifying the product $X  \times \mathbb{R}$. The choice of a meaningful $x_0  \in X$ can thus be thought of as picking a ``real" origin for $X$, making it into a ``true" vector space.
In such a case, the now-vector-space $X$ takes the place of the homogenization space as the base vector space, allowing for simplified results.
This simplification via a ``choice of origin" in $X$ is a recurring theme in this work.
\begin{proof}[\hypertarget{loc:oriented_linear_maps_are_in_bijection_with_oriented_affine_maps.proof}{}Proof of \Cref{loc:oriented_linear_maps_are_in_bijection_with_oriented_affine_maps.statement}]

Fix $A$ and $b$, and let $f(x) := A(x)+b$, $\psi(x,\lambda) := A(x) + \lambda b$.
Then all the remaining equalities are immediate.
Further, the form $f(x) = A(x)+ b$ describes arbitrary affine maps,
and the form $A(x)+ \lambda b$ arbitrary linear maps, therefore the specified
equations specify pairwise bijections between all three objects.
Finally, when $\ker f \neq \varnothing$, 
fix $x_0 \in \ker f$, and note that
$f(x_0) = A(x_0)+b = 0$,
therefore $b = -A(x_0)$. Substituting in the equations
$f(x)= A(x)+ b$ and $\psi(x, \lambda)= A(x) + \lambda b$ yields the result.
\end{proof}
We now show a number of lemmas describing the dehomogenization of oriented maps positive on sets, the dehomogenization of kernels, and the dehomogenization
of spans.
\begin{lemma}[Preservation of orientation in dehomogenization]
\label{loc:preservation_of_orientation_in_dehomogenization.statement}
Let $C \subseteq X$ be a convex set and $K := \cone(\gamma_1(C))$ its homogenization cone. For any linear oriented map $\psi:X \times \mathbb{R} \to(Y,Y_+)$, with $f:= \psi \circ \gamma_1$,
\begin{equation*}
\psi(K) \geq 0  \iff f(C) \geq 0,
\end{equation*}
hence the bijection of \Cref{loc:oriented_linear_maps_are_in_bijection_with_oriented_affine_maps.statement} between linear and affine oriented maps
restricts to a bijection between those maps positive on $K$ and $C$ respectively.
\end{lemma}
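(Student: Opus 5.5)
The plan is to verify the two implications directly, using only that $K = \cone(\gamma_1(C)) = \{\lambda(x,1) \mid \lambda \ge 0,\ x \in C\}$ and that $Y_+$ is a convex cone. The key identity is that for any $x \in X$ and $\lambda \ge 0$,
\begin{equation*}
\psi(\lambda(x,1)) = \lambda\,\psi(\gamma_1(x)) = \lambda f(x),
\end{equation*}
which follows from linearity of $\psi$ and the definition $f := \psi\circ\gamma_1$.

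For $(\Rightarrow)$, assume $\psi(K) \ge 0$. For any $x \in C$ we have $\gamma_1(x) = (x,1) \in K$ (take $\lambda = 1$). Hence $f(x) = \psi(x,1) \in Y_+$, so $f(C) \ge 0$.

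For $(\Leftarrow)$, assume $f(C) \ge 0$. An arbitrary element of $K$ has the form $\lambda(x,1)$ with $\lambda \ge 0$ and $x \in C$. The case $\lambda = 0$ gives the origin, and $\psi(0) = 0 \in Y_+$ since $Y_+$ is a cone containing $0$. Otherwise $\psi(\lambda(x,1)) = \lambda f(x)$, which lies in $Y_+$ because $Y_+$ is closed under positive scaling. If $C = \varnothing$ then $K = \{0\}$, and both sides hold trivially. Note that convexity of $C$ is not actually needed here, only that $K$ is the conic hull of $\gamma_1(C)$.

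The restriction of the bijection then follows immediately. \Cref{loc:oriented_linear_maps_are_in_bijection_with_oriented_affine_maps.statement} pairs $\psi$ with $f = \psi\circ\gamma_1$, and the equivalence just shown says that $\psi$ is positive on $K$ exactly when its partner $f$ is positive on $C$. So the bijection maps the first subclass onto the second. There is no real obstacle. The only care needed is to handle the origin (and hence the empty-set case) separately, and to use that a generic element of $K$ is a nonnegative multiple of a point of $\gamma_1(C)$, as given by the definition of $\cone$.
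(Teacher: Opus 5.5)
Your proof is correct and is essentially the paper's argument: $(\Rightarrow)$ uses $\gamma_1(C)\subseteq K$, and $(\Leftarrow)$ writes each nonzero element of $K$ as a positive multiple of a lifted point of $C$ and applies $\psi(\lambda(x,1)) = \lambda f(x)$ together with closure of $Y_+$ under positive scaling. One small nit: under the paper's literal definition of $\cone$ one gets $\cone(\gamma_1(\varnothing)) = \varnothing$ rather than $\{0\}$, but the equivalence holds vacuously under either reading, so this changes nothing.
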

\begin{proof}[\hypertarget{loc:preservation_of_orientation_in_dehomogenization.proof}{}Proof of \Cref{loc:preservation_of_orientation_in_dehomogenization.statement}]

$(\implies):$ $\psi(K) \geq 0  \implies \psi(\gamma_1(\gamma_1^{-1}(K))) \geq 0  \implies f(C) \geq 0$.

$(\impliedby)$: Any point in $K$ is either the origin, in which case $\psi(0) \geq 0$ trivially, or it is of the form $(x,\lambda)$, for $\lambda > 0$, and such that $\frac{1}{\lambda}(x,\lambda) = (x /\lambda, 1)  \in C  \times \{ 1 \}$, and hence $x /\lambda  \in C$. Then since
\begin{equation*}
\psi(x,\lambda) = \lambda \psi(x / \lambda, 1) = \lambda f(x / \lambda),
\end{equation*}
we have $\lambda > 0$ and $f(x /\lambda) \geq 0$ because $x /\lambda  \in C$. Therefore $\psi(x,\lambda) \geq 0$, showing that $\psi(K) \geq 0$.
\end{proof}
\begin{lemma}[Dehomogenization of kernels]
\label{loc:dehomogenization_of_kernels.statement}
Let $\psi:X  \times \mathbb{R} \to (Y,Y_+)$ with $f := \psi \circ \gamma_1$. 
Then
\begin{equation*}
\ker f = \gamma_1^{-1}(\ker \psi).
\end{equation*}
\end{lemma}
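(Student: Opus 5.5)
This is a direct set-theoretic unwinding of definitions, so I would prove the equality by double inclusion through a chain of equivalences. For $x \in X$, recall that $f := \psi \circ \gamma_1$, so $f(x) = \psi(\gamma_1(x)) = \psi(x,1)$. The plan is then to observe
\begin{equation*}
x \in \ker f \iff f(x) = 0 \iff \psi(\gamma_1(x)) = 0 \iff \gamma_1(x) \in \ker \psi \iff x \in \gamma_1^{-1}(\ker \psi).
\end{equation*}
The first and third equivalences are the definition of a kernel (here for the affine map $f$, meaning its zero set, which may be empty). The second is the definition of $f$. The last is the definition of the preimage, and since we make no invertibility assumption on $\gamma_1$, this is exactly how $\gamma_1^{-1}$ is read in \Cref{loc:body.notation}.

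Optionally, I would add a remark using \Cref{loc:oriented_linear_maps_are_in_bijection_with_oriented_affine_maps.statement}, writing $\psi(x,\lambda) = A(x) + \lambda b$. This shows concretely that $\ker f = A^{-1}(-b)$ is the slice at height $1$ of the subspace $\ker \psi$. In particular, $\ker f$ is empty precisely when $\ker\psi$ misses $X \times \{1\}$. This matches the case distinction in that proposition, but it is not needed for the equality itself.

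I expect no real obstacle here. The only point requiring care is notational: ``$\ker$'' of an affine map must be understood as its zero set $f^{-1}(0)$, rather than as a linear kernel. Once that convention is fixed, the statement holds for arbitrary $\psi$, with no positivity or convexity hypotheses, which is why the lemma carries none.
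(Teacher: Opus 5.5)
Your proof is correct and is the same argument as the paper's: the paper writes it as the composition-of-preimages identity $\ker f = (\psi\circ\gamma_1)^{-1}(\{0\}) = \gamma_1^{-1}(\psi^{-1}(\{0\}))$, which is your elementwise chain of equivalences stated at the level of sets. Your optional remark that $\ker f = A^{-1}(-b)$ is accurate but, as you note, not needed for the equality.
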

\begin{proof}[\hypertarget{loc:dehomogenization_of_kernels.proof}{}Proof of \Cref{loc:dehomogenization_of_kernels.statement}]

The identity follows from composition of preimages:
\begin{equation*}
\ker f = (\psi \circ \gamma_1)^{-1}(\{0\}) = \gamma_1^{-1}\left(\psi^{-1}(\{0\})\right) = \gamma_1^{-1}(\ker \psi).
\end{equation*}
\end{proof}
\begin{lemma}[Span dehomogenizes to the affine hull]
\label{loc:span_dehomogenizes_to_the_affine_hull.statement}
Let $S \subseteq X$ be a convex set and $H := \cone \circ \gamma_1(S)$ its homogenization cone. Then
\begin{equation*}
\gamma_1^{-1}(\Span H) = \aff(S).
\end{equation*}
\end{lemma}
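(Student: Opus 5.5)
We prove the two inclusions directly, after writing down $\Span H$ explicitly. Since $H$ is a convex cone containing the origin, $\Span H = H - H$. If $S = \varnothing$, then $H = \{0\}$ and $\Span H = \{0\}$. Its preimage $\gamma_1^{-1}(\{0\})$ is empty, because $(x,1) \neq 0$, and this matches $\aff(\varnothing) = \varnothing$. So from now on we assume $S \neq \varnothing$.

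\emph{The inclusion $\aff(S) \subseteq \gamma_1^{-1}(\Span H)$.} Take an affine combination $x = \sum_i \lambda_i s_i$ with $s_i \in S$ and $\sum_i \lambda_i = 1$. Then
\begin{equation*}
\gamma_1(x) = (x,1) = \sum_i \lambda_i (s_i, 1).
\end{equation*}
This is a linear combination of the elements $(s_i,1) = \gamma_1(s_i) \in H$, so it lies in $\Span H$. Hence $x \in \gamma_1^{-1}(\Span H)$.

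\emph{The inclusion $\gamma_1^{-1}(\Span H) \subseteq \aff(S)$.} This is the step that needs care. Suppose $(x,1) \in \Span H = H - H$, and write $(x,1) = h_1 - h_2$ with $h_1, h_2 \in H$. Since $H \subseteq (X \times \mathbb{R}_{++}) \cup \{0\}$, we can write $h_1 = \alpha(a,1)$ and $h_2 = \beta(b,1)$ with $a, b \in S$ and $\alpha, \beta \ge 0$. (When $h_i = 0$ we take the coefficient to be $0$ and pick any point of $S$, which is why we needed $S$ nonempty.) Comparing last coordinates gives $\alpha - \beta = 1$, so $\alpha = 1 + \beta$. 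Comparing first coordinates gives
\begin{equation*}
x = (1+\beta)a - \beta b.
\end{equation*}
The coefficients $1+\beta$ and $-\beta$ sum to $1$, so this is an affine combination of points of $S$, and therefore $x \in \aff(S)$.

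The main obstacle is only the bookkeeping of the last coordinate, which forces the coefficients to sum to one. Convexity of $S$ enters solely through $\Span H = H - H$, which relies on $H$ being a convex cone (\Cref{loc:dehomogenization_is_a_lattice_isomorphism_between_convex_sets_and_homogenization_cones.statement}).
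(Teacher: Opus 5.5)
Your proof is correct, but it takes a different route from the paper.

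\textbf{The paper's route.} The paper argues extrinsically. It writes $\Span H = \bigcap_{\psi(H)=0}\ker\psi$ and $\aff S = \bigcap_{f(S)=0}\ker f$, and commutes $\gamma_1^{-1}$ with the intersection. It then dehomogenizes each kernel via $\gamma_1^{-1}(\ker\psi)=\ker f$ (\Cref{loc:dehomogenization_of_kernels.statement}). Finally it reindexes along the bijection $\psi\mapsto f=\psi\circ\gamma_1$, using \Cref{loc:preservation_of_orientation_in_dehomogenization.statement} for $\psi$ and $-\psi$ to get $\psi(H)=0\iff f(S)=0$.

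\textbf{Your route.} You argue intrinsically, with explicit combinations. In both directions the last coordinate forces the coefficients to sum to one.

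\textbf{What each buys.} The paper's version fits the rest of its framework: hulls become kernels of maps annihilating the set, so the lemma is one more instance of dehomogenizing maps and then kernels. The cost is that it relies on the linear/affine map bijection, and on the fact, asserted there without proof, that subspaces and nonempty flats are exactly kernels of linear and affine maps. Your version is self-contained and elementary. For convex $S$ it also gives slightly more: every point of $\aff(S)$ has the form $(1+\beta)a-\beta b$ with $a,b\in S$ and $\beta\ge 0$.

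\textbf{Convexity.} If you only want the identity, convexity is not needed. Write a general element of $\Span H$ as $\sum_i\mu_i(s_i,1)$; the last coordinate again gives $\sum_i\mu_i=1$, so $\gamma_1^{-1}(\Span H)=\aff(S)$ holds for arbitrary $S$.

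\textbf{Empty set.} Your treatment of $S=\varnothing$ is fine under either reading of $\cone(\varnothing)$, since the span is $\{0\}$ in both cases.
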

We defer \hyperlink{loc:span_dehomogenizes_to_the_affine_hull.proof}{the proof} to the appendix.
\subsection{Positivity on convex sets}
\label{loc:body.positivity_minimization_and_faces.positivity_on_convex_sets}
The following are the affine analogues of supporting subspaces.
\begin{definition}[Supporting flats]
\label{loc:supporting_flats.statement}
A supporting flat $U$ of a convex set $C \subseteq X$ is
\begin{equation*}
U := \ker f
\end{equation*}
for some oriented affine map $f:X \to (Y, Y_+)$ that is positive on $C$.
\end{definition}
Similarly to the faces of convex cones, we define the faces of convex sets as intersections with kernels, and later show equivalence with the classical definition of the faces of convex sets.
\begin{definition}[Faces of convex sets]
\label{loc:faces_of_convex_sets.statement}
For a convex set $C \subseteq X$, the faces of $C$ are the sets of the form
\begin{equation*}
F = C \cap \ker f
\end{equation*}
for some affine oriented map $f:X \to (Y, Y_+)$ that is positive on $C$.
\end{definition}
The next lemma shows that the preimage $\gamma_1^{-1}(\cdot)$ dehomogenizes faces. 
\begin{lemma}[Dehomogenization is a bijection on the faces]
\label{loc:dehomogenization_is_a_bijection_on_the_faces.statement}
Let $C \subseteq X$ be a convex set and $K := \cone \circ \gamma_1(C)$ its homogenization cone.
The set-valued map $\gamma_1^{-1}(\cdot)$ restricts to a lattice
isomorphism between the faces of $K$ and the faces of $C$.
\end{lemma}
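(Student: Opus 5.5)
The plan is to work entirely through the oriented-map descriptions of faces (\Cref{loc:face_of_convex_cone.statement} and \Cref{loc:faces_of_convex_sets.statement}) and to transport them along the bijection of \Cref{loc:oriented_linear_maps_are_in_bijection_with_oriented_affine_maps.statement}.

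\textbf{Faces of $K$ go to faces of $C$.} Let $F = K \cap \ker\psi$ with $\psi$ positive on $K$, and set $f := \psi\circ\gamma_1$. By \Cref{loc:preservation_of_orientation_in_dehomogenization.statement}, $f$ is positive on $C$. Since preimages commute with intersections, \Cref{loc:dehomogenization_of_kernels.statement} gives
\begin{equation*}
\gamma_1^{-1}(F) = \gamma_1^{-1}(K)\cap\gamma_1^{-1}(\ker\psi) = C \cap \ker f .
\end{equation*}
Here $\gamma_1^{-1}(K) = C$ holds by \Cref{loc:dehomogenization_is_a_bijection_between_sets_and_homogenization_cones.statement}. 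So $\gamma_1^{-1}(F)$ is a face of $C$.

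\textbf{Every face of $C$ is hit.} Conversely, let $G = C\cap\ker f$ with $f$ positive on $C$. Take the unique linear $\psi$ with $f = \psi\circ\gamma_1$. By \Cref{loc:preservation_of_orientation_in_dehomogenization.statement}, $\psi$ is positive on $K$, so $F := K\cap\ker\psi$ is a face of $K$. The computation above then gives $\gamma_1^{-1}(F) = G$, which proves surjectivity onto the faces of $C$.

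\textbf{Injectivity.} Since the faces of $K$ need not a priori be convex homogenization cones, I would check this directly. First, each face $F = K\cap\ker\psi$ is a convex cone containing $0$, because $\ker\psi$ is a subspace. Second, $F$ is contained in $K\subseteq (X\times\mathbb{R}_{++})\cup\{0\}$, so $F$ is a homogenization cone; it is convex, being an intersection of convex sets. \Cref{loc:dehomogenization_is_a_lattice_isomorphism_between_convex_sets_and_homogenization_cones.statement} then gives $F = \cone\circ\gamma_1(\gamma_1^{-1}(F))$. Hence $\gamma_1^{-1}$ is injective on faces of $K$, with inverse $\cone\circ\gamma_1$ on faces of $C$.

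\textbf{The empty face.} One subtlety arises when $G=\varnothing$. The corresponding cone face is $\{0\}$, and this is consistent: $\cone(\varnothing)$ should be read as $\{0\}$ under the paper's convention that cones contain the origin. The identification $\cone\circ\gamma_1(\varnothing)=\{0\}$ also needs to be a face of $K$, and the surjectivity argument already exhibits it as one. I would note this explicitly.

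\textbf{Lattice structure, the main obstacle.} Both $\gamma_1^{-1}$ and its inverse $\cone\circ\gamma_1$ preserve inclusion. An order isomorphism between posets preserves whatever meets and joins exist, so the bijection is a lattice isomorphism of the face posets. The step I expect to need the most care is making sure the lattice operations are those of the face posets and not inherited set operations. I would argue purely order-theoretically: an inclusion-preserving bijection with inclusion-preserving inverse is a poset isomorphism, hence a lattice isomorphism. This avoids having to prove separately that intersections or joins of faces are faces.
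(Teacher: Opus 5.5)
Your proposal is correct and follows the paper's route. The identity $\gamma_1^{-1}(K \cap \ker\psi) = C \cap \ker f$, combined with the bijection between oriented linear and affine maps and the preservation of positivity, shows that $\gamma_1^{-1}$ sends the faces of $K$ onto the faces of $C$; the isomorphism between convex sets and convex homogenization cones then gives injectivity and the order structure. Your explicit checks make precise what the paper's proof leaves implicit: that faces of $K$ are themselves convex homogenization cones, and that an inclusion-preserving bijection with inclusion-preserving inverse is a lattice isomorphism of the face posets.
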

\begin{proof}[\hypertarget{loc:dehomogenization_is_a_bijection_on_the_faces.proof}{}Proof of \Cref{loc:dehomogenization_is_a_bijection_on_the_faces.statement}]

First, by \Cref{loc:preservation_of_orientation_in_dehomogenization.statement},
we have $\psi(K) \geq 0 \iff f(C) \geq 0$, over the pairs $f = \psi \circ \gamma_1$ of \Cref{loc:oriented_linear_maps_are_in_bijection_with_oriented_affine_maps.statement}. We have the identity
\begin{equation*}
\gamma_1^{-1}(K \cap \ker \psi) = \gamma_1^{-1}(K) \cap \gamma_1^{-1}(\ker \psi) = C \cap \ker f,
\end{equation*}
by the preimage commuting with intersections, and by the identity $\gamma_1^{-1}(\ker \psi)= \ker f$ from \Cref{loc:dehomogenization_of_kernels.statement}.
Since any face of $K$ takes the form of the l.h.s. (\Cref{loc:face_of_convex_cone.statement}), any face of $C$ takes the form of the r.h.s. (\Cref{loc:faces_of_convex_sets.statement}), and $\gamma_1^{-1}$ is a lattice isomorphism between convex sets and convex
cones contained in $(X \times \mathbb{R}_{++})\cup\{0\}$ (\Cref{loc:dehomogenization_is_a_lattice_isomorphism_between_convex_sets_and_homogenization_cones.statement}), the statement follows.
\end{proof}
The faces of convex sets admit the following characterizations, 
which correspond to the characterizations of the faces of convex cones in \Cref{loc:characterization_of_the_faces_of_convex_cones.statement}. In the following,
$]x, y[$ is the line segment from $x$ to $y$ excluding both endpoints.
\begin{theorem}[Characterization of the faces of convex sets]
\label{loc:characterization_of_the_faces_of_convex_sets.statement}
Let $C \subseteq X$ and $F \subseteq C$ be convex sets.
Then $F \triangleleft C$ precisely when any, hence all, of the following equivalent statements hold.
\begin{enumerate}
\item $\forall x, y \in C, ]x, y[ \cap F \neq \varnothing \implies x, y \in F$.
\item For $K := \cone\circ\gamma_1(C)$, there is a face $T  \triangleleft K$ such that $F = \gamma_1^{-1}(T)$.
\item Either $F = \varnothing$, or $F = \min_{P} C$ for some compatible preorder $P \subseteq X$.
\item $F = C \cap \ker f$ for some oriented affine map $f:X \to (Y,Y_+)$ that is positive on $C$.
\item $F = C \cap L$ for $L$ a supporting flat of $C$.
\end{enumerate}
\end{theorem}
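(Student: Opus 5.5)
Since $F \triangleleft C$ means point 4 by \Cref{loc:faces_of_convex_sets.statement}, I will prove $4 \Leftrightarrow 5$, $4 \Leftrightarrow 2$, $2 \Rightarrow 1 \Rightarrow 3 \Rightarrow 4$.

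The equivalence $4 \Leftrightarrow 5$ is immediate from \Cref{loc:supporting_flats.statement}. For $4 \Leftrightarrow 2$ I will use \Cref{loc:dehomogenization_is_a_bijection_on_the_faces.statement}. By \Cref{loc:preservation_of_orientation_in_dehomogenization.statement} and \Cref{loc:dehomogenization_of_kernels.statement}, for $f = \psi\circ\gamma_1$ we have $C \cap \ker f = \gamma_1^{-1}(K \cap \ker\psi)$, with $f$ positive on $C$ iff $\psi$ is positive on $K$. Hence the sets in point 4 are exactly the dehomogenizations of the faces of $K$ given by \Cref{loc:face_of_convex_cone.statement}.

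For $2 \Rightarrow 1$, take $x,y\in C$ and $z = tx+(1-t)y \in F$ with $t\in]0,1[$. Then $(z,1) = t(x,1) + (1-t)(y,1)$ lies in $T$. Since $t(x,1)$ and $(1-t)(y,1)$ lie in $K$, point 1 of \Cref{loc:characterization_of_the_faces_of_convex_cones.statement} puts both in $T$. Positive scaling keeps $T$ closed, so $(x,1),(y,1)\in T$, i.e.\ $x,y\in F$.

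For $1 \Rightarrow 3$, assume $F \neq \varnothing$. I would dehomogenize the cone construction $P = K - F$ from the proof of \Cref{loc:characterization_of_the_faces_of_convex_cones.statement}, and take $P := \convcone(C - F) + \lin$-type enlargement. The cleanest candidate is $P := \convcone(C - F)$, which is a convex cone with the origin and hence a compatible preorder by \Cref{loc:equivalence_of_convexity_and_transitivity.statement}. Then $F \le_P C$. Also $F - F \subseteq P \cap -P$, because $f - f' \in C - F$ for $f\in C$, $f'\in F$. So every $f \in F$ is a minimum, and $F \subseteq \min_P C$.

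The main obstacle is the converse: if $x\in C$ satisfies $x \le_P f$ for some $f\in F$, then $x\in F$. Writing $f - x = \sum_i \lambda_i (c_i - f_i)$ with $c_i\in C$, $f_i\in F$, $\lambda_i>0$, I would rearrange into a convex-combination identity
\[
\tfrac{1}{1+\Lambda}f + \tfrac{\Lambda}{1+\Lambda}\bar f = \tfrac{1}{1+\Lambda}x + \tfrac{\Lambda}{1+\Lambda}\bar c,
\]
where $\Lambda = \sum\lambda_i$, $\bar f$ is the corresponding convex combination of the $f_i$, and $\bar c$ that of the $c_i$. The left side lies in $F$ by convexity and is an interior point of the segment $]x,\bar c[$. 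Point 1 then gives $x\in F$.

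For $3 \Rightarrow 4$, the case $F=\varnothing$ is handled by a map that is strictly positive on $C$. I would take $\psi(x,\lambda) = \lambda$ into $(\mathbb{R},\mathbb{R}_+)$, so $f\equiv 1$ and $C \cap \ker f = \varnothing$. Otherwise, fix $x_0 \in F$ and set $f := \pi_P(\cdot - x_0)$, using \Cref{loc:preservation_of_orientation_in_dehomogenization.statement}'s companion \Cref{loc:preorders_are_characterized_by_oriented_maps.statement}. Then $f$ is positive on $C$ because $x_0 \le_P C$. Moreover $C\cap\ker f = \{x\in C: x\sim_P x_0\}$, which equals $\min_P C$ by transitivity.
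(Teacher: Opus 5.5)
Your proof is correct. It takes a genuinely different route from the paper's at one point, and the cycle $4 \Rightarrow 2 \Rightarrow 1 \Rightarrow 3 \Rightarrow 4$ together with $4 \Leftrightarrow 5$ does close all the equivalences.

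The steps $4 \Leftrightarrow 2$, $2 \Rightarrow 1$, $3 \Rightarrow 4$ and $4 \Leftrightarrow 5$ coincide with the paper's. You differ in how you get from the algebraic definition back to the others. The paper proves $1 \Rightarrow 2$ by lifting the splitting property to the homogenization cone, where convex combinations in $C$ become sums in $K$. It proves $4 \Rightarrow 3$ by taking the preorder induced by the exposing map $f$, for which $\min_f C = C \cap \ker f$. You instead prove $1 \Rightarrow 3$ directly in $X$ with the explicit preorder $\cone(C-F)$. This is the affine analogue of the choice $P = K - F$ in the proof of \Cref{loc:characterization_of_the_faces_of_convex_cones.statement}, and your convex-combination identity correctly turns $x \le_P f$ into a point of $F$ lying in $]x,\bar c[$.

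What your route buys is an explicit, canonical, choice-free witness. You are in effect proving the fixed-point identity $F = \min_{\cone(C-F)} C$ for every non-empty face. The paper only reaches this later, from \Cref{loc:characterization_of_the_generated_face_of_a_convex_set.statement} (points 1 and 3 with $D = F$), after its lemmas on dehomogenizing minima and differences. It then relies on that identity in \Cref{loc:decomposition_of_faces_of_the_difference_of_convex_sets.statement}. The paper's route, in turn, stays inside its homogenization program and shows that the preorder induced by \emph{any} exposing map witnesses the face, not only the canonical one.

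Three small clean-ups:
\begin{enumerate}
\item Drop the aside about a ``$\lin$-type enlargement''.
\item Since $C - F$ is a non-empty convex set, $\convcone(C-F) = \cone(C-F)$, so the single-term representation $f - x = \lambda(c - f')$ suffices.
\item State the degenerate case $\lambda = 0$ (that is, $x = f$) explicitly.
\end{enumerate}
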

The novel results in this list are points 3 and 4. 

Point 3 provides a direct, intuitive notion of faces as sets of minima, under the caveat of non-emptiness. Making $\varnothing$ a face explicitly is necessary because of the case where $C$ is a singleton, in which case no preorder has the empty set as its set of minima (in all other cases, the discrete preorder $\{0\}$ has an empty set of minima).

Point 4 corresponds to \Cref{loc:faces_of_convex_sets.statement}, which we choose as
our core definition due to its unconditional nature. It is the dehomogenization
of the characterization of faces by positive maps from the theory of ordered vector spaces.

Point 1 shows equivalence of the other characterizations with the classical definition of faces of convex sets, while point 5 recovers another characterization of faces when considered together with \Cref{loc:characterization_of_supporting_flats.statement} point 1, in that a face
is characterized by two facts: that it is given by intersection with an affine flat, and
that its complement is convex. This characterization of faces was first stated as an exercise in \cite{brondstedIntroductionConvexPolytopes1983}, and later shown in \cite{scottBilateralFacialReduction2026}. Our exposition explains this characterization by identifying the supporting flats with the kernels of oriented affine maps and the convexity of the complement with the positivity in the partial order of the codomain.
\begin{proof}[\hypertarget{loc:characterization_of_the_faces_of_convex_sets.proof}{}Proof of \Cref{loc:characterization_of_the_faces_of_convex_sets.statement}]

$1 \iff 2$: The equivalence  follows from convex combinations being in correspondence with sums in the homogenization space. We show this in detail.

$(1 \implies 2)$:
Any element in $K := \cone(\gamma_1(C))$ can be expressed as $(\lambda_x x, \lambda_x) \in K$, for some $\lambda_x \geq 0$, $x \in C$. Let $T = \cone(\gamma_1(F))$, a convex cone since $F$ is convex, and note that $F = \gamma_1^{-1}(T)$ by \Cref{loc:dehomogenization_is_a_lattice_isomorphism_between_convex_sets_and_homogenization_cones.statement}. 
We show that $T$ is a face of $K$ because it has the splitting property (\Cref{loc:characterization_of_the_faces_of_convex_cones.statement} point 1).
For any two elements $(\lambda_x x,  \lambda_x),  (\lambda_y y,  \lambda_y) \in K$
such that $(\lambda_x x,  \lambda_x) + (\lambda_y y,  \lambda_y) \in T$,
if either is zero, then their sum being in $T$ implies they are both in $T$ trivially. Otherwise,
let $\lambda := \frac{\lambda_x}{\lambda_x + \lambda_y}$. The sum is
\begin{equation*}
(\lambda_x x,  \lambda_x) + (\lambda_y y,  \lambda_y) = (\lambda_x + \lambda_y)(\lambda x + (1-\lambda) y, 1).
\end{equation*}
Since the l.h.s. is in $T$, so is the r.h.s. Dividing by $(\lambda_x + \lambda_y)$ on the r.h.s.,
we find an element in $T$ at height $1$, so $(\lambda x + (1-\lambda) y) \in F$. By point 1, this implies $x,y \in F$, hence $(\lambda_x x, \lambda_x), (\lambda_y y,\lambda_y) \in T$.

$(2 \implies 1)$:
Note that since $T \triangleleft K$, we have $T \subseteq K \subseteq X  \times \mathbb{R}_{++} \cup \{ 0 \}$, hence $T$ is a homogenization cone. Therefore for $F := \gamma_1^{-1}(T)$, we have $T = \cone(\gamma_1(F))$ by \Cref{loc:dehomogenization_is_a_bijection_between_sets_and_homogenization_cones.statement}. 
Consider any $x,y \in C$, $\lambda \in ]0,1[$ such that $\lambda x + (1-\lambda) y \in F$. Then note that
\begin{equation*}
(\lambda x + (1-\lambda) y, 1) = (\lambda x, \lambda) + ((1-\lambda) y, 1-\lambda),
\end{equation*}
The l.h.s. can be seen to be in $T$. Then since the sum on the r.h.s. is in $K$, the splitting property of $T$ (\Cref{loc:characterization_of_the_faces_of_convex_cones.statement} point 1) guarantees that
the individual terms are in $T$, from which it follows that $x, y \in F$.

$4 \iff 2$: Given by \Cref{loc:dehomogenization_is_a_bijection_on_the_faces.statement},
with faces defined via kernels by \Cref{loc:face_of_convex_cone.statement} for convex cones and by \Cref{loc:faces_of_convex_sets.statement} for convex sets.
$3 \iff 4$: When $F = \varnothing$, both points hold trivially.
Otherwise, in all cases we may fix some $x_0 \in F$.

$(3 \implies 4)$: We have the quotient map $\pi_P$ defined from the preorder $P$
as in \Cref{loc:preorders_are_characterized_by_oriented_maps.statement},
and then $f(x) := \pi_P(x-x_0)$ is an oriented affine map positive on $C$ such that $\ker f \cap C = \min_P C$.

$(4 \implies 3)$: The face is given by minimization in the preorder induced by $f$
($x \le_{f} y \iff f(x) \le f(y)$).
Since we have both $f(C) \ge 0$ and $f(x_0) = 0$ for some witness $x_0 \in C$, it follows that
$\min_f C = \argmin_{x \in C} f(x) = \{x \in C \mid f(x) = 0\} = C \cap  \ker f$.

$5 \iff 4$: The statements correspond, up to specifying supporting flats as in \Cref{loc:supporting_flats.statement}.
\end{proof}
Finally, we characterize the supporting flats, obtained via
dehomogenization of supporting subspaces.
\begin{theorem}[Characterization of supporting flats]
\label{loc:characterization_of_supporting_flats.statement}
Let $C \subseteq X$ be a convex set and $L \subseteq X$ an affine flat. The following are equivalent characterizations for $L$ being a supporting flat of $C$.
\begin{enumerate}
\item $C \setminus L$ is convex.
\item $L = \ker f$ for some affine oriented map $f:X \to (Y, Y_+)$ that is positive on $C$.
\item $L \cap C$ is a face of $C$.
\item $L = \gamma_1^{-1}(U)$ for $U$ a supporting subspace of $K := \cone(\gamma_1(C))$.
\end{enumerate}
\end{theorem}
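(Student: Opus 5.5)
The plan is to prove three equivalences, $2 \iff 4$, $1 \iff 4$ and $1 \iff 3$, which together connect all four points. The first is pure dehomogenization bookkeeping. The second transfers the convex-complement condition through the homogenization cone using \Cref{loc:characterization_of_supporting_subspaces.statement} point 1. The third is a direct check against the segment characterization of faces in \Cref{loc:characterization_of_the_faces_of_convex_sets.statement} point 1.

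For $2 \iff 4$, I use the bijection $f = \psi \circ \gamma_1$ of \Cref{loc:oriented_linear_maps_are_in_bijection_with_oriented_affine_maps.statement}. By \Cref{loc:preservation_of_orientation_in_dehomogenization.statement}, $\psi(K) \ge 0 \iff f(C) \ge 0$. By \Cref{loc:dehomogenization_of_kernels.statement}, $\ker f = \gamma_1^{-1}(\ker\psi)$.
\begin{itemize}
\item Given $L = \ker f$ with $f$ positive on $C$, the corresponding $\psi$ is positive on $K$. Then $U := \ker\psi$ is a supporting subspace of $K$ by \Cref{loc:supporting_subspace_definition_from_oriented_maps.statement}, and $L = \gamma_1^{-1}(U)$.
\item Conversely, if $U = \ker\psi$ with $\psi(K) \ge 0$, then $f := \psi\circ\gamma_1$ is positive on $C$, and $\gamma_1^{-1}(U) = \ker f$.
\end{itemize}

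For $1 \iff 4$, I rely on one key observation. Let $U \subseteq X\times\mathbb{R}$ be a subspace with $\gamma_1^{-1}(U) = L$. Every nonzero element of $K$ has the form $\lambda(x,1)$ with $\lambda>0$ and $x \in C$, and $\lambda(x,1) \in U \iff x \in L$ by scaling. Hence
\begin{equation*}
K \setminus U = \bigl(\cone(\gamma_1(C\setminus L))\bigr)\setminus\{0\}.
\end{equation*}
\begin{itemize}
\item For $4 \implies 1$: by \Cref{loc:characterization_of_supporting_subspaces.statement} point 1, $K\setminus U$ is convex. Its slice at height one is $\gamma_1(C \setminus L)$, and the intersection of a convex set with an affine hyperplane is convex, so $C\setminus L$ is convex.
\item For $1 \implies 4$: I must construct $U$ from $L$. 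If $L \neq \varnothing$, take $U := \Span(\gamma_1(L))$, so that $\gamma_1^{-1}(U) = \aff(L) = L$ by \Cref{loc:span_dehomogenizes_to_the_affine_hull.statement}. If $L = \varnothing$, take $U := X\times\{0\}$, which meets $X\times\{1\}$ in nothing.
\item In either case, the display shows that $K\setminus U$ is the homogenization cone of the convex set $C\setminus L$ with the origin removed. By \Cref{loc:dehomogenization_is_a_lattice_isomorphism_between_convex_sets_and_homogenization_cones.statement}, a homogenization cone is closed under addition, and sums of nonzero elements stay at positive height and are therefore nonzero. So $K\setminus U$ is a convex cone without the origin, and $U$ is a supporting subspace.
\end{itemize}
I expect this construction of $U$ to be the main obstacle. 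Two points need care: handling the empty flat separately, and verifying that $U \cap (X \times\{1\}) = \gamma_1(L)$ exactly, which is what the span lemma supplies.

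For $1 \iff 3$, set $F := C\cap L$, which is convex as an intersection of convex sets.
\begin{itemize}
\item For $1 \implies 3$: take $x,y\in C$ and $z \in\, ]x,y[\, \cap F$. If both $x,y\notin L$, convexity of $C\setminus L$ puts $z \notin L$, a contradiction. If, say, $x \in L$, then $y$ is an affine combination of $x$ and $z$, both in $L$, so $y\in L$ because $L$ is affine. In every case $x,y\in F$, so $F$ is a face by \Cref{loc:characterization_of_the_faces_of_convex_sets.statement} point 1.
\item For $3 \implies 1$: take $x,y \in C\setminus L = C\setminus F$. By the same point 1, the open segment $]x,y[$ cannot meet $F$, since otherwise $x,y\in F$. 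Since $]x,y[ \subseteq C$ by convexity, we get $[x,y] \subseteq C\setminus L$.
\end{itemize}
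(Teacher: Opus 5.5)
Your proof is correct, but it is organized differently from the paper's.

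\textbf{The paper's route.} It proves the cycle $1 \Rightarrow 2 \Rightarrow 3 \Rightarrow 1$ together with $2 \iff 4$.
\begin{itemize}
\item For $1 \Rightarrow 2$, it takes an arbitrary affine map $f$ with $\ker f = L$ and equips it with the orientation $\cone(f(C))$. This cone is pointed by \Cref{loc:convex_set_mapped_with_supporting_kernel_has_a_pointed_image.statement}.
\item For $3 \Rightarrow 1$, it takes an exposing map $f$ of the face $L \cap C$, whose kernel need not be $L$. It applies $2 \Rightarrow 1$ to $\ker f$, which is the reverse direction of the same pointed-image lemma, and then uses $C \setminus \ker f = C \setminus L$.
\end{itemize}

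\textbf{Your route.} Your $2 \iff 4$ matches the paper's, but the other two links differ.
\begin{itemize}
\item You prove $1 \iff 4$ in the cone. You build an explicit $U$, namely $\Span(\gamma_1(L))$, or $X \times \{0\}$ when $L = \varnothing$, and invoke point 1 of \Cref{loc:characterization_of_supporting_subspaces.statement}. Your identity $K \setminus U = \cone(\gamma_1(C \setminus L)) \setminus \{0\}$ reproves inline the third equivalence in the paper's proof of the pointed-image lemma. For nonempty $L$, your $U$ coincides with $\ker\psi$ for the homogenization $\psi$ of any affine $f$ with $\ker f = L$. So there the two constructions produce the same subspace. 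For $L = \varnothing$ you pick the largest admissible choice.
\item You prove $1 \iff 3$ by a direct segment argument against \Cref{loc:characterization_of_the_faces_of_convex_sets.statement} point 1. The only input is that $L$ is affine, which is what puts $y$ in $L$ once $x, z \in L$.
\end{itemize}

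\textbf{What each buys.} The paper's route is shorter. It also directly exhibits an oriented map realizing $L$ from any affine map with that kernel, which is the point of the framework. Your route has three advantages:
\begin{itemize}
\item Every implication is written out. The paper's appeal to ``the above argument $(2 \Rightarrow 1)$'' is never stated separately.
\item The empty flat is handled explicitly.
\item It shows that the equivalence between ``$C \setminus L$ is convex'' and ``$L \cap C$ is a face'' is a purely classical consequence of $L$ being affine, independent of orientations and homogenization.
\end{itemize}
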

Point 1 shows that the supporting flats match the notion in \cite[Definition 6.1]{scottBilateralFacialReduction2026}. Point 3 is also notable: not only do supporting flats characterize faces, but faces also characterize supporting flats, a novel characterization. The analogous fact for convex cones has a direct justification: by \Cref{loc:characterization_of_supporting_subspaces.statement} point 2, a supporting subspace $U$ is supporting when $\forall y \in W, z \in U, 0 \le_{K} y \le_{K} z  \implies y \in U$. Yet positivity in $K$ implies that all feasible points $y, z$ in the statement must be in $K$, hence the statement is concerned only with the intersection set $K \cap U$. 

To show \Cref{loc:characterization_of_supporting_flats.statement}, we require the
following lemma.
\begin{lemma}[Convex set mapped with supporting kernel has a pointed image]
\label{loc:convex_set_mapped_with_supporting_kernel_has_a_pointed_image.statement}
Given a convex set $C \subseteq X$ and an affine map $\phi:X \to Y$,
\begin{equation*}
\cone (\phi(C)) \text{ is pointed} \iff C \setminus \ker \phi \text{ is convex}.
\end{equation*}
\end{lemma}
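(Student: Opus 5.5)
The plan is to dehomogenize \Cref{loc:convex_cone_mapped_with_supporting_kernel_has_a_pointed_image.statement}. Let $K := \cone\circ\gamma_1(C)$, and let $\psi: X\times\mathbb{R}\to Y$ be the linear map with $\phi = \psi\circ\gamma_1$, given by \Cref{loc:oriented_linear_maps_are_in_bijection_with_oriented_affine_maps.statement} (the bijection there does not use the order on $Y$). Applying that lemma to $K$ and $\psi$ gives
\begin{equation*}
\psi(K)\text{ is pointed} \iff K\setminus\ker\psi \text{ is a convex cone without the origin}.
\end{equation*}
It then suffices to identify the two sides of this equivalence with the two sides of the statement.

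For the left-hand sides, I compute $\psi(K)$ directly. Its elements are $0$ and $\psi(\lambda(x,1)) = \lambda\phi(x)$ for $\lambda>0$ and $x\in C$. Hence $\psi(K) = \cone(\phi(C))\cup\{0\}$, which equals $\cone(\phi(C))$ under the paper's convention $\cone(S)\ni 0$. Pointedness is therefore the same condition on both sides.

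For the right-hand sides, observe that $K\setminus\ker\psi = \cone\circ\gamma_1(C\setminus\ker\phi)\setminus\{0\}$. Indeed, for $\lambda>0$ we have $\lambda(x,1)\in\ker\psi$ iff $\phi(x)=0$, by \Cref{loc:dehomogenization_of_kernels.statement} and homogeneity. Also $0$ is never in $K\setminus\ker\psi$. I then apply the argument of \Cref{loc:dehomogenization_is_a_lattice_isomorphism_between_convex_sets_and_homogenization_cones.statement} to the set $S := C\setminus\ker\phi$: the set $\cone\circ\gamma_1(S)\setminus\{0\}$ is closed under addition iff $S$ is convex. The computation $\lambda(x,1)+\mu(y,1)=(\lambda+\mu)(tx+(1-t)y,1)$ gives closure from convexity. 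Conversely, $t(x,1)+(1-t)(y,1)\in K\setminus\ker\psi$ forces $tx+(1-t)y\in S$. Closure under positive scaling is automatic, so "convex cone without the origin" is equivalent to convexity of $C\setminus\ker\phi$.

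The main obstacle is bookkeeping rather than substance. That lemma is proved verbatim for a convex set and cones containing the origin, so I must check that removing the origin does not break it. This holds because all nonzero elements sit at positive height, and sums of such elements stay at positive height, so the origin never arises as a sum. I also need to handle the degenerate case $C=\varnothing$, where both sides hold trivially: $\cone(\varnothing)=\{0\}$ is pointed and $\varnothing$ is convex.
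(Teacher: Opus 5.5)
Your proof is correct and follows the paper's route. Like the paper, you apply the conic lemma to the homogenization $\psi$ of $\phi$ and then translate both sides, using $\psi(K)=\cone(\phi(C))$ and $\gamma_1^{-1}(K\setminus\ker\psi)=C\setminus\ker\phi$. Your explicit check that removing the origin is harmless (sums of vectors at positive height never vanish) supplies a step the paper leaves implicit: the paper cites the lattice-isomorphism lemma for the origin-free cone $K\setminus\ker\psi$, although that lemma is stated for cones containing the origin.
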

\begin{proof}[\hypertarget{loc:convex_set_mapped_with_supporting_kernel_has_a_pointed_image.proof}{}Proof of \Cref{loc:convex_set_mapped_with_supporting_kernel_has_a_pointed_image.statement}]

For $K = \cone \circ \gamma_1(C)$ and $\psi$ the homogenization of $\phi$ given by \Cref{loc:oriented_linear_maps_are_in_bijection_with_oriented_affine_maps.statement},
\begin{align*}
\cone(\phi(C)) \text{ is pointed}
&\iff \psi(K) \text{ is pointed} \\
&\iff K \setminus \ker \psi \text{ is a convex cone without the origin} \\
&\iff C \setminus \ker \phi \text{ is convex}.
\end{align*}
The first equivalence holds because $\psi(K) = \cone(\phi(C))$, since $\psi(\lambda\gamma_1(c)) = \lambda\phi(c)$.
The second is \Cref{loc:convex_cone_mapped_with_supporting_kernel_has_a_pointed_image.statement}.
We show the third. By \Cref{loc:dehomogenization_of_kernels.statement}, $\ker\phi = \gamma_1^{-1}(\ker\psi)$, so
\begin{equation*}
\gamma_1^{-1}(K \setminus \ker\psi) = \gamma_1^{-1}(K) \setminus \gamma_1^{-1}(\ker\psi) = C \setminus \ker\phi.
\end{equation*}
Since $K \setminus \ker\psi \subseteq (X \times \mathbb{R}_{++})\cup\{0\}$, the last equivalence is given by \Cref{loc:dehomogenization_is_a_lattice_isomorphism_between_convex_sets_and_homogenization_cones.statement}, which states that a homogenization cone is convex if and only if its associated set is convex.
\end{proof}
\begin{proof}[\hypertarget{loc:characterization_of_supporting_flats.proof}{}Proof of \Cref{loc:characterization_of_supporting_flats.statement}]

$1 \implies 2$: Consider any affine map $f:X \to Y$ with kernel $L$. We provide it with orientation $f:X \to (Y, \cone(f(C)))$. By \Cref{loc:convex_set_mapped_with_supporting_kernel_has_a_pointed_image.statement},
$\cone(f(C))$ is pointed, hence we have a valid oriented map.
$2 \implies 3$: That $L \cap C$ is a face holds by our definition of faces, \Cref{loc:faces_of_convex_sets.statement}.

$3  \implies 1$: Since $L \cap C$ is a face, by \Cref{loc:faces_of_convex_sets.statement} there exists an affine oriented map $f$ such that $\ker f \cap C = L \cap C$ (making no claim that $\ker f = L$ here). Then $\ker f$ is a supporting flat, and by the above argument $(2  \implies 1)$, $C \setminus \ker f$ is convex, hence $C\setminus L$ is convex by virtue of being the same set.

$2 \iff 4$: The kernels correspond as $\ker f = \gamma_1^{-1}(\ker \psi)$ by \Cref{loc:dehomogenization_of_kernels.statement}, and $f$ is positive on $C$ exactly when $\psi$ is positive on $K$ by \Cref{loc:preservation_of_orientation_in_dehomogenization.statement}. This relates to the supporting subspaces in point 4 because supporting subspaces are the kernels of the oriented maps $\psi$ positive on $K$ by \Cref{loc:supporting_subspace_definition_from_oriented_maps.statement}. 
\end{proof}
\section{Separation}
\label{loc:body.separation}
Let $K, G \subseteq W$ be convex cones.
We say that a preorder $P \subseteq W$ separates $K$ from $G$ when
$G \le_{P} K$.
There always exists a preorder separating two convex cones (for instance, the
preorder obtained by the set difference $K-G:=\{ k - g \mid k \in K,g  \in G\}$). 
An important realization is that in any given preorder,
\begin{equation*}
G \le_{P} K  \iff  G \leq_{P} 0 \leq_{P} K,
\end{equation*}
because the origin is contained in any convex cone, hence $G \leq_{P} K  \implies 0 \leq_{P} K$ from the fact that $0 \in G$.
This statement strengthens to $G \leq_{P} \lin(P) \leq_{P} K$ since
$\lin(P) = \{ x  \in W \mid x \sim_{P} 0\}$, so there naturally
emerges a subspace $\lin(P)$ induced by the preorder separating $K$ and $G$.

Any preorder $P \subseteq W$ has three zones of importance in $W$:
the positive cone $P$, the negative cone $-P$, and the lineality $\lin(P)$.
We derived above that two convex cones being ordered by $P$ implies
the stronger notion that one convex cone is contained in the positive cone $P$ and the other in the negative cone $-P$,
with $\lin(P)$ between them.

Expressing the preorder $P$ via a corresponding oriented linear map
$\psi:W \to (Y, Y_+)$ (for which $P = \psi^{-1}(Y_+)$), separation takes the form
$\psi(G)\le \psi(K)$, with the equivalent statement $\psi(G) \leq 0 \leq \psi(K)$.
The subspace in between the two is $U = \ker \psi$.
When $\psi$ is a functional, i.e. its codomain is the ordered real line $(\mathbb{R}, \mathbb{R}_+)$, $U$ is a
separating hyperplane between $K$ and $G$. The kernels of separating oriented
maps therefore generalize separating hyperplanes, and we call them \emph{separating subspaces}.

There is a third equivalent way of expressing separation, by
using the set-difference $K-G \subseteq W$. It is immediate that this is the finest preorder
separating $K$ and $G$, since
\begin{equation*}
G \le_{P} K \iff K-G \ge_{P} 0  \iff K - G \subseteq P.
\end{equation*}
The finest preorder therefore characterizes all separating preorders by the above containment.
Analogously, an oriented map $\psi:W \to (Y,Y_+)$ is separating
precisely when it is monotone in the finest separating preorder $K-G$, since
\begin{equation*}
\psi(G) \leq \psi(K)  \iff \psi(K-G) \geq 0.
\end{equation*}
We characterize the separating subspaces.
\begin{theorem}[Characterization of separating subspaces]
\label{loc:characterization_of_separating_subspaces.statement}
For convex cones $K, G \subseteq W$, a subspace $U \subseteq W$ is a separating subspace of $K$ and $G$ when, equivalently, 
\begin{enumerate}
\item $U$ is a supporting subspace of $K-G$.
\item $U = \ker \psi$ for some oriented map $\psi:W \to (Y, Y_+)$ such that $\psi(G)\le \psi(K)$.
\item $U = \lin(P)$ for a preorder $P \subseteq W$ such that $G \le_{P} K$.
\item $U$ is a supporting subspace of both $K$ and $G$, and such that $\forall k  \in K, g  \in G, k-g \in U \implies k, g \in U$.
\end{enumerate}
\end{theorem}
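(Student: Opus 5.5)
The plan is to take point 2 as the definition of a separating subspace and to prove the cycle $2 \Rightarrow 3 \Rightarrow 1 \Rightarrow 2$, then attach point 4 through $1 \iff 4$. Throughout, note that $K-G$ is a convex cone containing $0$: it is $K+(-G)$ with both summands convex cones. Hence \Cref{loc:characterization_of_supporting_subspaces.statement} applies to it.

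For $2 \Rightarrow 3$, take $P := \psi^{-1}(Y_+)$. Then $G \le_P K$ by definition, and $\lin(P) = \ker\psi$ because $Y_+$ is pointed.

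For $3 \Rightarrow 1$, the discussion preceding the theorem gives $G \le_P K \iff K-G \ge_P 0$. So $P$ is a preorder with $K-G \ge_P 0$, and $U = \lin(P) = \{x \mid x \sim_P 0\}$ is a supporting subspace of $K-G$ by point 4 of \Cref{loc:characterization_of_supporting_subspaces.statement}.

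For $1 \Rightarrow 2$, point 3 of the same theorem yields an oriented map $\psi$ with $\ker\psi = U$ and $\psi(K-G) \ge 0$. This is exactly $\psi(G) \le \psi(K)$.

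For $1 \Rightarrow 4$, let $\psi$ be as in the previous step. Since $0 \in G$ and $0 \in K$, we get $\psi(K) \ge 0$ and $\psi(-G) \ge 0$, so $U$ is a supporting subspace of $K$ and of $-G$. Supporting subspaces of $G$ and $-G$ coincide: if $\psi$ is positive on $-G$, then the oriented map $\psi$ into $(Y,-Y_+)$ is positive on $G$ and has the same kernel. For the splitting condition, suppose $k-g \in U$. Then $\psi(k) \ge 0$, $\psi(-g) \ge 0$ and $\psi(k)+\psi(-g) = 0$, so pointedness of $Y_+$ forces $\psi(k) = \psi(g) = 0$. This is the same argument as $4 \Rightarrow 1$ in \Cref{loc:characterization_of_the_faces_of_convex_cones.statement}.

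For $4 \Rightarrow 1$, I would verify point 1 of \Cref{loc:characterization_of_supporting_subspaces.statement} for $K-G$, namely that $(K-G)\setminus U$ is closed under addition. It is clearly closed under positive scaling and misses the origin. Take $x_1 = k_1 - g_1$ and $x_2 = k_2 - g_2$ outside $U$, and suppose for contradiction that $x_1 + x_2 = (k_1+k_2) - (g_1+g_2) \in U$. The splitting hypothesis in point 4 gives $k_1+k_2 \in U$ and $g_1+g_2 \in U$. Since $U$ is a supporting subspace of $K$, the set $U\cap K$ is a face of $K$ (point 5 of \Cref{loc:characterization_of_supporting_subspaces.statement}), so the splitting property of faces gives $k_1, k_2 \in U$. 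The same argument applied to $G$ gives $g_1, g_2 \in U$. Then $x_1 \in U$, a contradiction.

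The main subtlety I expect is in $4 \Rightarrow 1$. Supporting-ness of $U$ for $K$ and for $G$ must be used separately on each summand, and the splitting condition serves only to pass from the difference to the individual sums. Beyond that, the only care needed is the sign convention relating $G$ and $-G$.
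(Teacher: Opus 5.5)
Your proof is correct and is essentially the paper's argument rearranged: $1\Rightarrow 2$ and $2\Rightarrow 3$ coincide with the paper's. Your $1\Rightarrow 4$ is the paper's $3\Rightarrow 4$, phrased through $\psi$ and the pointedness of $Y_+$ rather than through $\sim_P$, and your direct $3\Rightarrow 1$ (via $K-G\subseteq P$) simply replaces the paper's route $3\Rightarrow 4\Rightarrow 1$. Your $4\Rightarrow 1$ uses the same key move as the paper: absorb the aggregated sums $k_1+k_2$ and $g_1+g_2$, then split inside the faces $U\cap K$ and $U\cap G$. The only difference there is that you check the convex-complement characterization of supporting subspaces of $K-G$ rather than the order-ideal one.
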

Point 4 is novel, and notable because it is purely algebraic, without reference to the separating maps from which we derived them. This characterization is of a similar kind to the algebraic, classical definition of the faces of convex cones. It recovers the algebraic definition of faces of $K$ when taking $G := -K$. 
This structure is related to the fact that the supporting subspaces of a convex cone $K$ are equivalently the subspaces separating $K$ from its negation $-K$, a fact that becomes clear when considering point 2: 
\begin{equation*}
\psi(K) \geq 0  \implies \psi(-K) \leq 0  \implies \psi(-K) \leq \psi(K).
\end{equation*}
\begin{proof}[\hypertarget{loc:characterization_of_separating_subspaces.proof}{}Proof of \Cref{loc:characterization_of_separating_subspaces.statement}]

$1 \implies 2$: By the definition of supporting subspaces (\Cref{loc:supporting_subspace_definition_from_oriented_maps.statement}) there is an oriented map $\psi:W \to (Y, Y_+)$ that is positive on
$K-G$ and such that $\ker \psi = U$. Since $K \subseteq K-G$ and $-G \subseteq K-G$, we have that $\psi(K) \ge 0$ and $\psi(G) \le 0$, hence $\psi(G) \leq \psi(K)$.

$2 \implies 3$: The preorder $\psi^{-1}(Y_+)$ satisfies the requirements, since
\begin{equation*}
\ker \psi = \{ u  \in W \mid u \sim_{P} 0\}.
\end{equation*}
in the order $P :=\psi ^{-1}(Y_+)$.

$3 \implies 4$: The subspace $U=\lin(P)$ supports $K$ since $K \geq_{P} 0$ by \Cref{loc:characterization_of_supporting_subspaces.statement} point 4, and supports $G$ by the same argument with the preorder $-P$.
Note that $k-g  \in U  \implies k-g \sim_{P} 0  \implies k \sim_{P} g$. But we also have $k \geq_{P} 0$ and $g \leq_{P} 0$, therefore $k \sim_{P} 0,g \sim_{P} 0$ individually, and therefore $k,g \in U$.

$4  \implies 1$: We show that $U$ is a supporting subspace by showing it satisfies \Cref{loc:characterization_of_supporting_subspaces.statement} point 2. Let $x  \in W, y  \in U$ be such that $0 \le_{K-G} x \le_{K-G} y$; it suffices to show that $x  \in U$. From these inequalities, there exists $k_1, k_2  \in K$ and $g_1, g_2  \in G$ such that $x = k_1 -g_1$ and $y = k_1 + k_2 - g_1-g_2$.  With $k := (k_1 + k_2)\in K$ and $g := (g_1 + g_2)\in G$, we have $k-g =y \in U$, therefore by point 4 we find that $k,g  \in U$. Since $U$ supports $K$ (again by point 4), we have that $U \cap K$ is a face of $K$, so by the algebraic definition of faces (\Cref{loc:characterization_of_the_faces_of_convex_cones.statement} point 1) $k_1, k_2 \in K \cap U$. By a similar argument, $g_1, g_2 \in G \cap U$.
From this, $x = k_1 - g_1 \in U$, which concludes the proof.
\end{proof}
Separating subspaces reveal two faces by intersection with each convex cone.
\begin{proposition}[Decomposition of faces of difference of convex cones]
\label{loc:decomposition_of_faces_of_difference_of_convex_cones.statement}
For two convex cones $K, G \subseteq W$, and any face $F \triangleleft K-G$, the
sets
$F_K := F \cap K$ and $F_G := (-F) \cap G$ are faces of $K$ and $G$ respectively,
and moreover, they are such that 
\begin{equation*}
F = F_K-F_G.
\end{equation*}

Further, for any supporting subspace $U$ of $K-G$ such that $F = U \cap (K-G)$,
we have $F_K = U \cap K$ and $F_G = U \cap G$.
\end{proposition}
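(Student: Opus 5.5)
Fix a face $F \triangleleft K-G$. Since $K-G$ is a convex cone, the characterization of faces of convex cones (point 4) gives an oriented map $\psi:W \to (Y,Y_+)$ that is positive on $K-G$ with $F = (K-G)\cap\ker\psi$. Let $U := \ker\psi$. By \Cref{loc:characterization_of_separating_subspaces.statement} (point $1 \implies 4$), $U$ is a supporting subspace of both $K$ and $G$, and it has the splitting property $k-g\in U \implies k,g\in U$. I will prove the second claim first, for an arbitrary supporting subspace $U$ of $K-G$ with $F = U\cap(K-G)$. The first claim then follows by choosing $U=\ker\psi$.

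The plan is to show $F\cap K = U\cap K$ and $(-F)\cap G = U\cap G$. For the first equality, take $k \in U\cap K$. Since $0\in G$, we have $k = k-0\in K-G$, so $k\in U\cap(K-G)=F$. Conversely, $F\cap K \subseteq U\cap K$ because $F\subseteq U$. For the second equality, take $g\in U\cap G$. Then $-g = 0-g\in K-G$, and $-g\in U$ since $U$ is a subspace, so $-g\in F$, i.e. $g\in -F$. Conversely, $(-F)\cap G\subseteq (-U)\cap G = U\cap G$. Because $U$ supports $K$ and $G$, \Cref{loc:characterization_of_supporting_subspaces.statement} point 5 shows that $U\cap K\triangleleft K$ and $U\cap G\triangleleft G$. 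Hence $F_K$ and $F_G$ are faces.

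It remains to show $F = F_K - F_G$. For $\supseteq$: if $k\in U\cap K$ and $g\in U\cap G$, then $k-g\in K-G$ and $k-g\in U$, so $k-g\in F$. For $\subseteq$: any $x\in F$ can be written $x=k-g$ with $k\in K$, $g\in G$ and $k-g\in U$. The splitting property of point 4 of \Cref{loc:characterization_of_separating_subspaces.statement} then gives $k,g\in U$, so $k\in F_K$ and $g\in F_G$.

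The only step that is not pure bookkeeping is this last inclusion $F\subseteq F_K-F_G$. Its content is that a representation $x = k-g$ cannot leave the subspace $U$ even when the individual terms $k,g$ are unconstrained a priori. This is exactly what the splitting property of separating subspaces provides. So the work reduces to checking that an arbitrary supporting subspace $U$ of $K-G$ satisfies point 4 of \Cref{loc:characterization_of_separating_subspaces.statement}. This is immediate because point 1 of that theorem is precisely "$U$ supports $K-G$", and the theorem's equivalence $1 \implies 4$ delivers point 4.
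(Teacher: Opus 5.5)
Your proposal is correct and follows essentially the same route as the paper. You pass from $F$ to a supporting subspace $U$ of $K-G$, use the $1 \implies 4$ direction of the characterization of separating subspaces to get that $U$ supports $K$ and $G$ and absorbs differences, and then read off $F = (U\cap K) - (U\cap G)$, $F\cap K = U\cap K$ and $(-F)\cap G = U\cap G$. The only difference is cosmetic: you check the last two identities elementwise, while the paper obtains them from the sandwiches $K\cap U \subseteq F \subseteq U$ and $G\cap U \subseteq -F \subseteq U$.
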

\begin{proof}[\hypertarget{loc:decomposition_of_faces_of_difference_of_convex_cones.proof}{}Proof of \Cref{loc:decomposition_of_faces_of_difference_of_convex_cones.statement}]

We first show the second part of the statement, and then the first.
Since $F$ is a face of $K-G$, it has a supporting subspace $U$ of $K-G$ for
which $F =  U \cap (K-G)$, since by \Cref{loc:face_of_convex_cone.statement}, the face is given by intersection with a kernel, and this kernel is a supporting subspace by \Cref{loc:supporting_subspace_definition_from_oriented_maps.statement}. The supporting subspace $U$ is then a separating subspace of $K$
and $G$ by \Cref{loc:characterization_of_separating_subspaces.statement} point 1.
By \Cref{loc:characterization_of_separating_subspaces.statement} point 4, the subspace $U$
supports both $K$ and $G$, so $F_K := K \cap U \triangleleft K$ and $F_G := G \cap U \triangleleft G$
by \Cref{loc:characterization_of_supporting_subspaces.statement} point 5 (we show
$F_K$ and $F_G$ are as in the statement below).
The same point 4 gives $F = (K - G) \cap U = K \cap U - G \cap U$: the containment $\supseteq$ holds
because $U$ is a subspace, and for the inclusion $\subseteq$, any $k - g \in (K-G) \cap U$ has
$k, g \in U$ by \Cref{loc:characterization_of_separating_subspaces.statement} point 4. This
shows the second part of the statement.

For the first part of the statement, it remains only to show that $F_K := K \cap U = F \cap K$,
and that similarly $F_G := G \cap U = (-F) \cap G$.
Recall that $F := (K-G) \cap U$, and note that
$F_K = K \cap U \subseteq (K - G) \cap U = F \subseteq U$, so by taking intersection with $K$ on both sides, we have a sandwich which yields
$F_K = F \cap K$. Similarly, since $G \cap U \subseteq (G-K) \cap U = -F \subseteq U$, we have $F_G = G \cap (-F)$.
\end{proof}
\subsection{Separating flats}
\label{loc:body.separation.separating_flats}
For convex cones $K, G \subseteq X \times \mathbb{R}$, and a preorder $P \subseteq X \times \mathbb{R}$, how does the statement $G \le_{P} K$ manifest in the affine slice $X = \gamma_1^{-1}(X \times \mathbb{R})$?
The three regions of $P$, namely the positive cone $P$, the negative cone $-P$,
and the lineality $\lin(P)$, dehomogenize respectively to a ``positive" convex set
$\gamma_1^{-1}(P)$, a ``negative" convex set $\gamma_1^{-1}(-P)$, and an affine flat
$\gamma_1^{-1}(\lin(P))$. $P$ also induces a preorder on $X$, since $\forall x,y  \in X,$
\begin{equation*}
\gamma_1(y) \leq_{P} \gamma_1(x) \iff \gamma_1(x) -\gamma_1(y)  \in  P  \iff \gamma_0(x-y)  \in  P  \iff  x-y  \in  \gamma_0^{-1}(P),
\end{equation*}
making $\gamma_0^{-1}(P)$ the induced preorder.

The three zones and the preorder can be understood as the affine manifestation of a general convex cone in the homogenization space $X \times \mathbb{R}$.
In \Cref{loc:body.positivity_minimization_and_faces.dehomogenization}, we focused on homogenization cones $K \subseteq (X \times \mathbb{R}_{++}) \cup \{0\}$.
These cones are the ones with trivial negative set $\gamma_1^{-1}(-K) = \varnothing$, trivial neutral flat $\gamma_1^{-1}(\lin(K))= \varnothing$, 
and trivial induced preorder $\gamma_0^{-1}(K) = \{0\}$. As such, they are
suitable to represent individual convex sets, carrying no additional baggage. 
In contrast, a single general convex cone directly induces additional structure which is suggestive of separation: a positive and a negative zone which are ordered in a given preorder,
with their intersection forming an affine flat between the two zones. We think of this structure as separating two convex sets when they are contained in the positive and negative zones respectively.
This motivates separation as a fundamental object of study, since it arises from the dehomogenization of a single general convex cone in the homogenization space.

As before, we express $P$ as in \Cref{loc:preorders_are_characterized_by_oriented_maps.statement} with a linear oriented map $\psi:X \times \mathbb{R} \to (Y, Y_+)$ such that $P = \psi^{-1}(Y_+)$, and then dehomogenize the map by precomposition with $\gamma_1$ as in \Cref{loc:oriented_linear_maps_are_in_bijection_with_oriented_affine_maps.statement}. We derive the separation of two convex sets
$C,  D \subseteq X$ from the conic separation $\psi(G) \le \psi(K)$ of the
homogenization cones $K := \cone \circ\gamma_1(C)$ and $G := \cone \circ \gamma_1(D)$. 
Dehomogenizing  all three objects, we find an affine oriented map $f := \psi \circ \gamma_1$ such that
$f(D) \le 0 \le f(C)$ by \Cref{loc:preservation_of_orientation_in_dehomogenization.statement}, which motivates our next definition.
\begin{definition}[Separating affine maps]
\label{loc:separating_affine_maps.statement}
Given two convex sets $C, D \subseteq X$, an affine oriented map
$f:X \to (Y, Y_+)$ separates $C$ from $D$ when
\begin{equation*}
f(D) \le 0 \le f(C)
\end{equation*}
(or if $f(C) \le 0 \le f(D)$).
\end{definition}
Note that when the codomain of $f$ is $\mathbb{R}$, the convex sets are separated by the hyperplane $\ker f$, and our definition recovers separation by functionals and hyperplanes.
On the other extreme, a separating affine map may have an empty kernel: for two disjoint convex sets $C, D \subseteq X$, it can be shown that the difference $K - G$ of their respective homogenization cones is a pointed convex cone, because both $K$ and $G$ are pointed convex cones with intersection $K \cap G = \{ 0 \}$ (see \hyperlink{loc:generated_faces_are_formed_by_symmetric_differences.proof}{the proof} of \Cref{loc:generated_faces_are_formed_by_symmetric_differences.statement}). Equipping the embedding map with the orientation $K-G$ yields $\tilde{\gamma}_1:X \to (X \times \mathbb{R}, K-G)$, an oriented affine map which is separating because $\tilde{\gamma}_1(D)\le 0 \le \tilde{\gamma}_1(C)$, and which is such that $\ker \tilde{\gamma}_1 = \varnothing$.

One interesting special case of separation is when $D$ is a singleton contained in $C$.
In this case, our notion of separation recovers concepts of locality; geometric objects
that are local at a point.
For $x \in C$ and an affine map $f$ separating $\{x\}$ from $C$, we have $f(x) \leq 0 \leq f(C)$, and $x \in C$ forces $f(x)=0$. 
The separating maps therefore correspond to the oriented linear maps positive on $C$ when $x$ is fixed to be the origin. 
The subset of these oriented maps with codomain
$(\mathbb{R}, \mathbb{R}_{-})$ forms the normal
cone at $x$.
\begin{theorem}[Oriented separation of convex sets]
\label{loc:oriented_separation_of_convex_sets.statement}
For convex sets $C, D \subseteq X$ and an oriented affine map $f:X \to (Y, Y_+)$,
the following are equivalent characterizations of the fact of $f$ separating $C$ from $D$.
\begin{enumerate}
\item $f(D) \le 0 \le f(C)$.
\item The homogenization of $f$, $\psi(x, \lambda) := f(x) - (1-\lambda) f(0)$, is such that $\psi(G) \le \psi(K)$ for the homogenization cones $K := \cone \circ \gamma_1(C)$ and $G := \cone\circ\gamma_1(D)$.
\end{enumerate}

Further, when $C \cap D \neq \varnothing$, the following are also equivalent to the above.
\begin{enumerate}
\setcounter{enumi}{2}
\item The map $f$ is such that the oriented linear map $A(x) := f(x) - f(0)$ is positive on $C-D$, and further $f$ is such that $f(x) = A(x-x_0)$ for any $x_0 \in C \cap D$.
\item The map $f$ is such that its linear part $A(x) := f(x)-f(0)$ is an oriented linear map that, for any choice of $x_0 \in C \cap D$, separates the convex cones $\cone(C - x_0)$ and $\cone(D- x_0)$, and further $f$ is such that $f(x) = A(x-x_0)$.
\end{enumerate}
\end{theorem}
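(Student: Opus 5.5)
The plan is to pass everything through the homogenization bijection of \Cref{loc:oriented_linear_maps_are_in_bijection_with_oriented_affine_maps.statement}, together with the observation that conic separation reduces to a pair of one-sided positivity conditions. First I will check that the map in point 2 is the homogenization of that proposition. With $b := f(0)$ and $A := f - f(0)$,
\begin{equation*}
\psi(x,\lambda) = f(x) - (1-\lambda) f(0) = A(x) + \lambda b,
\end{equation*}
so $f = \psi \circ \gamma_1$.

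For $1 \iff 2$, I will use the remark at the start of \Cref{loc:body.separation}: since $0 \in K \cap G$ and $\psi$ is linear, $\psi(G) \le \psi(K)$ holds if and only if $\psi(G) \le 0 \le \psi(K)$. \Cref{loc:preservation_of_orientation_in_dehomogenization.statement} gives $\psi(K) \ge 0 \iff f(C) \ge 0$ directly. For the other side, I apply the same lemma to $D$ and the oriented map $-\psi$, whose dehomogenization is $-f$. This gives $\psi(G) \le 0 \iff f(D) \le 0$. The reversed clause $f(C) \le 0 \le f(D)$ in \Cref{loc:separating_affine_maps.statement} is handled by swapping the roles of $C$ and $D$.

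Now assume $C \cap D \neq \varnothing$ and fix any $x_0 \in C \cap D$. The key observation is that point 1 forces $f(x_0) \le 0 \le f(x_0)$, so $f(x_0) = 0$ by pointedness of $Y_+$. Hence $x_0 \in \ker f$, and the last part of \Cref{loc:oriented_linear_maps_are_in_bijection_with_oriented_affine_maps.statement} gives $f(x) = A(x - x_0)$ for every such $x_0$. Conversely, the clause ``$f(x) = A(x-x_0)$'' appearing in points 3 and 4 is simply equivalent to $f(x_0) = 0$, because $f = A + f(0)$. So in each direction it suffices to compare the positivity conditions under the standing hypothesis $f(x_0) = 0$.

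For $1 \implies 3$, I write $A(c - d) = f(c) - f(d) \ge 0$. For $3 \implies 1$, I use $x_0 \in D$ and $x_0 \in C$ respectively:
\begin{equation*}
f(c) = A(c - x_0) \ge 0, \qquad f(d) = A(d - x_0) = -A(x_0 - d) \le 0 .
\end{equation*}
For $1 \iff 4$, positive homogeneity of $A$ and the fact that $0$ lies in both cones reduce $A(\cone(D - x_0)) \le A(\cone(C - x_0))$ to $A(D - x_0) \le 0 \le A(C - x_0)$. This is exactly $f(D) \le 0 \le f(C)$, using $f = A(\cdot - x_0)$.

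None of the steps is deep. The main point requiring care is the quantifier ``for any $x_0 \in C \cap D$'': the representation $f = A(\cdot - x_0)$ must be shown to hold for every point of the intersection, not just one. This is exactly what the forcing argument $f(x_0) = 0$ provides.
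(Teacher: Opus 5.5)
Your proposal is correct and follows essentially the same route as the paper. It proves $1 \iff 2$ by applying \Cref{loc:preservation_of_orientation_in_dehomogenization.statement} to $(\psi, C)$ and to $(-\psi, D)$, and it handles points 3 and 4 through the forced vanishing $f(x_0)=0$ for $x_0 \in C \cap D$ together with $c - x_0,\ x_0 - d \in C - D$, while also making explicit two steps the paper leaves implicit: $\psi(G) \le \psi(K) \iff \psi(G) \le 0 \le \psi(K)$ because $0 \in K \cap G$, and the clause $f = A(\cdot - x_0)$ is equivalent to $f(x_0) = 0$.
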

Points 3 and 4 amount to picking any $x_0 \in C \cap D$ as the origin of $X$:
separation of $C$ and $D$ then reduces to positivity of the linear part on $C-D$,
or equivalently to conic separation of $\cone(C - x_0)$ from $\cone(D-x_0)$.
\begin{proof}[\hypertarget{loc:oriented_separation_of_convex_sets.proof}{}Proof of \Cref{loc:oriented_separation_of_convex_sets.statement}]

$1 \iff 2$ Follows from \Cref{loc:preservation_of_orientation_in_dehomogenization.statement} with the affine map $f$ and convex set $C$, and from \Cref{loc:preservation_of_orientation_in_dehomogenization.statement} applied again to the affine map $-f$ and the convex set $D$.

$1 \implies 3$
For the forward direction, positivity on $C-D$ follows from
\begin{equation*}
A(C - D) = A(C) - A(D) = f(C) - f(D) \geq 0.
\end{equation*}
To show that $f(x) = A(x-x_0)$ for any $x_0 \in C \cap D$, by \Cref{loc:oriented_linear_maps_are_in_bijection_with_oriented_affine_maps.statement} we only need that $f(x_0)=0$. For this, note that any $x_0 \in C \cap D$ satisfies $0 \le f(x_0) \le 0$ from membership in $C$ and $D$ respectively, hence $f(x_0) = 0$ by antisymmetry of the codomain order.

$3  \implies 1$
Fix any $x_0 \in C \cap D$. For $c \in C$ we have $c - x_0 \in C - D$, so $f(c) = A(c - x_0) \ge 0$; for $d \in D$ we have $x_0 - d \in C - D$, so $f(d) = A(d - x_0) \le 0$, which shows point 1.

$1 \implies 4$
As in $(1 \implies 3)$, any $x_0 \in C \cap D$ is such that $f(x_0) = 0$, so $f(x) = A(x - x_0)$. Writing $\tilde{C} := C - x_0$ and $\tilde{D} := D - x_0$, we have $A(\tilde{C}) = f(C) \ge 0$ and $A(\tilde{D}) = f(D) \le 0$. Since $A$ is linear, this positivity extends to $\cone(\tilde{C})$ and the negativity to $\cone(\tilde{D})$, so $A$ separates the two cones, which shows point 4.

$4 \implies 1$
Fix any $x_0 \in C \cap D$. From $A$ separating the cones we have $A(\cone(\tilde{C})) \ge 0$ and $A(\cone(\tilde{D})) \le 0$, in particular $A(\tilde{C}) \ge 0$ and $A(\tilde{D}) \le 0$. Since $f(x) = A(x - x_0)$, this reads $f(C) \ge 0$ and $f(D) \le 0$, which shows point 1.
\end{proof}
We call the kernels of separating affine maps \emph{separating flats}. We prove a number of characterizations for these flats.
\begin{theorem}[Characterization of the separating flats]
\label{loc:characterization_of_the_separating_flats.statement}
Given convex sets $C, D \subseteq X$, an affine flat $L \subseteq X$ is a separating flat when,
equivalently,
\begin{enumerate}
\item $L = \gamma_1^{-1}(U)$ for $U$ a separating subspace of $K = \cone \circ \gamma_1(C)$ and $G = \cone \circ \gamma_1(D)$.
\item $L = \ker f$ for some affine oriented $f:X \to (Y, Y_+)$ separating $C$ from $D$.
\end{enumerate}

And if $C \cap D \neq \varnothing$, the following statements are also equivalent.

\begin{enumerate}
\setcounter{enumi}{2}
\item $L$ is supporting for both $C$ and $D$, and $\forall c  \in C, d  \in D, c-d \in L-L \implies c,d \in L$.
\item $L =  \tilde{L} + C \cap D$ for $\tilde{L}$ a supporting subspace of $\cone(C-D)$.
\end{enumerate}
\end{theorem}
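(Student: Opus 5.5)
I will prove the equivalences by dehomogenization for $1 \iff 2$, and by choosing an origin $x_0 \in C \cap D$ for points 3 and 4. Throughout, $K,G$ are the homogenization cones of $C,D$.

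\textbf{Step 1: $1 \iff 2$.} If $U$ is a separating subspace of $K$ and $G$, then by \Cref{loc:characterization_of_separating_subspaces.statement} point 2 we have $U = \ker\psi$ with $\psi(G)\le\psi(K)$. Set $f:=\psi\circ\gamma_1$. By \Cref{loc:oriented_separation_of_convex_sets.statement} ($2\implies 1$), $f$ separates $C$ from $D$, and by \Cref{loc:dehomogenization_of_kernels.statement}, $\ker f = \gamma_1^{-1}(U)$. Conversely, given a separating $f$, take its homogenization $\psi$ from \Cref{loc:oriented_linear_maps_are_in_bijection_with_oriented_affine_maps.statement}. The same two results show that $\ker\psi$ is a separating subspace and that $\gamma_1^{-1}(\ker\psi)=\ker f$.

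\textbf{Step 2: $2 \iff 4$.} Assume $C\cap D\ne\varnothing$ and fix $x_0\in C\cap D$. Given a separating $f$, \Cref{loc:oriented_separation_of_convex_sets.statement} point 3 gives $f(x)=A(x-x_0)$ with $A$ positive on $C-D$, hence on $\cone(C-D)$. So $\tilde L:=\ker A$ is a supporting subspace of $\cone(C-D)$, and $\ker f = x_0+\tilde L$. It remains to check that $x_0+\tilde L = \tilde L + C\cap D$. Any $x_1\in C\cap D$ has $f(x_1)=0$, so $x_1\in x_0+\tilde L$; since $\tilde L$ is a subspace this gives $\tilde L+C\cap D = x_0+\tilde L$. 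Conversely, given $\tilde L=\ker A$ with $A$ positive on $\cone(C-D)$, put $f(x):=A(x-x_0)$. Then $f(c)=A(c-x_0)\ge 0$ because $c-x_0\in C-D$, and $f(d)=-A(x_0-d)\le 0$. So $f$ separates $C$ from $D$. Also $A(x_1-x_0)\ge0$ and $A(x_0-x_1)\ge0$ for any $x_1\in C\cap D$, so $x_1-x_0\in\tilde L$ by pointedness, and therefore $\ker f=\tilde L+C\cap D$.

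\textbf{Step 3: $2\implies 3$.} Write $L=\ker f = x_0+\ker A$, so $L-L=\ker A$. Since $f$ is positive on $C$, $L$ supports $C$ (\Cref{loc:supporting_flats.statement}); since $-f$ is positive on $D$, $L$ supports $D$. Now let $c-d\in\ker A$. Then $f(c)-f(d)=A(c-d)=0$, while $f(c)\ge 0\ge f(d)$. Hence $f(c)=f(d)$ lies in both $Y_+$ and $-Y_+$, so antisymmetry forces $f(c)=f(d)=0$, that is, $c,d\in L$.

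\textbf{Step 4: $3\implies 4$ (the main obstacle).} Since $L$ supports $C$ and $C\cap D\ne\varnothing$, we should have $C\cap D\subseteq L$. To see this, take $x_1\in C\cap D$: then $x_1-x_1=0\in L-L$, so the condition in point 3 gives $x_1\in L$ (the hypothesis is used with $c=d=x_1$). Thus $L=x_0+\tilde L$ where $\tilde L:=L-L$ is a linear subspace. We must show $\tilde L$ supports $\cone(C-D)$, using \Cref{loc:characterization_of_supporting_subspaces.statement} point 1, i.e. that $\cone(C-D)\setminus\tilde L$ is closed under addition.

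Take $\lambda(c_1-d_1)$ and $\mu(c_2-d_2)$ outside $\tilde L$ and suppose their sum lies in $\tilde L$. Normalize so that $\lambda+\mu=1$ and use convexity of $C$ and $D$: the sum becomes $(\lambda+\mu)(c-d)$ with $c=\lambda c_1+\mu c_2\in C$ and $d=\lambda d_1+\mu d_2\in D$. Then $c-d\in\tilde L$, so $c,d\in L$ by the condition in point 3. Since $L\cap C$ is a face of $C$ (\Cref{loc:characterization_of_supporting_flats.statement} point 3), the face property from \Cref{loc:characterization_of_the_faces_of_convex_sets.statement} point 1 gives $c_1,c_2\in L$; likewise $d_1,d_2\in L$. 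Hence $c_1-d_1\in\tilde L$, a contradiction.

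The edge case is $\lambda$ or $\mu$ zero, where the corresponding term is $0\in\tilde L$ and there is nothing to prove. The other delicate point is making sure that elements of $\cone(C-D)$ are exactly of the form $\lambda(c-d)$ with $\lambda\ge 0$; this holds because $C-D$ is convex, so its conic hull needs only single generators. This step is where the full force of point 3 is needed, and it is the one I expect to require the most care.
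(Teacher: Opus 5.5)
Your proof is correct. Steps 1--3 and the $2\iff 4$ argument follow the paper's proof almost line by line. You are in fact more careful than the paper in two places: you check $x_0+\tilde L=\tilde L+C\cap D$ via pointedness, and you use $-f$ to see that $L$ supports $D$.

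The genuine difference is how you leave point 3. The paper proves $3\Rightarrow 2$. It translates by $x_0\in C\cap D$, invokes point 4 of \Cref{loc:characterization_of_separating_subspaces.statement} for the cones $\cone(C-x_0)$ and $\cone(D-x_0)$, and then gets a separating linear map from point 2 of that theorem. You instead prove $3\Rightarrow 4$ directly, by checking point 1 of \Cref{loc:characterization_of_supporting_subspaces.statement} for $\cone(C-D)$. A convex combination merges the generators $\lambda(c_1-d_1)$ and $\mu(c_2-d_2)$ into a single $c-d$. Absorption then puts $c,d\in L$, and the segment property of the faces $L\cap C$ and $L\cap D$ pushes $c_1,c_2,d_1,d_2$ into $L$.

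The paper's route is shorter because it reuses the conic theorem. However, it silently needs that support and absorption for $C-x_0$ and $D-x_0$ transfer to their conic hulls. This is true, by rescaling and using $0\in(C-x_0)\cap(D-x_0)$, but it is not written out. Your route is self-contained and stays in the affine setting. It is really an affine analogue of the paper's own $4\Rightarrow 1$ argument in \Cref{loc:characterization_of_separating_subspaces.statement}, at the cost of relying on $4\Rightarrow 2$ to close the cycle.

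One caveat, shared with the paper: obtaining $C\cap D\subseteq L$ from absorption with $c=d=x_1$ uses $0\in L-L$, i.e.\ $L\neq\varnothing$. This has to be read into ``affine flat''. The empty set is a supporting flat of any convex set, so it would satisfy point 3 vacuously while failing point 2.
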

In point 3 we present a surprisingly compact characterization of
separating flats, which is purely algebraic and reminiscent of the
splitting property for faces.
Point 4 shows that separating flats are the same objects as the supporting
subspaces considered in \cite{scottBilateralFacialReduction2026}.
\begin{proof}[\hypertarget{loc:characterization_of_the_separating_flats.proof}{}Proof of \Cref{loc:characterization_of_the_separating_flats.statement}]

$1 \iff 2$: By \Cref{loc:oriented_separation_of_convex_sets.statement}, the affine map $f$ separates $C,D$ if and only if its homogenization $\psi$ separates the corresponding homogenization cones. Then $\ker \psi$ is a separating subspace by \Cref{loc:characterization_of_separating_subspaces.statement} point 2, and $\ker f = \gamma_1^{-1}(\ker \psi)$ by \Cref{loc:dehomogenization_of_kernels.statement}.

$2 \implies 3$: $L$ supports both $C$ and $D$ by \Cref{loc:characterization_of_supporting_flats.statement} point 2. For the separation condition: $\forall c  \in C, d  \in D, c-d \in \ker f - \ker f = \ker A$, (for $A(x) = f(x)-f(0)$ the linear part of $f$).
Then $A(c-d) = 0$, hence $A(c) = A(d)$, and therefore $f(c)=f(d)$.
But also, by their membership in $C,D$ we have $0 \leq f(c) = f(d) \leq 0$, hence $f(c)=f(d)=0$ since the codomain of $f$ is a partial order. This shows point 3.

$3 \implies 2$:
Fix $x_0 \in C \cap D$; it lies in $L$ by taking $c = d = x_0$ in the absorption condition. With $\tilde{L} := L - x_0$, $\tilde{C} := C - x_0$, and $\tilde{D} := D - x_0$, the absorption condition reads $\tilde{c} - \tilde{d} \in \tilde{L} \implies \tilde{c}, \tilde{d} \in \tilde{L}$. By \Cref{loc:characterization_of_separating_subspaces.statement} point 4, $\tilde{L}$ is a separating subspace of $\cone(\tilde{C})$ and $\cone(\tilde{D})$; by its point 2 there is an oriented linear map $A:X \to (Y, Y_+)$ with $\ker A = \tilde{L}$ and $A(\cone \tilde{D}) \le 0 \le A(\cone \tilde{C})$. Then $f(x) := A(x - x_0)$ satisfies $f(D) \le 0 \le f(C)$ with $\ker f = x_0 + \tilde{L} = L$, which is point 2.

$2 \implies 4$:
Let $L = \ker f$ for a separating $f$. By \Cref{loc:oriented_separation_of_convex_sets.statement} point 3, for any $x_0 \in C \cap D$ we have $f(x) = A(x - x_0)$ with $A(x) := f(x) - f(0)$ oriented and positive on $C - D$. Positivity on $C - D$ is positivity on $\cone(C-D)$, so by \Cref{loc:characterization_of_supporting_subspaces.statement} point 3, $\tilde{L} := \ker A$ is a supporting subspace of $\cone(C-D)$. Since $\ker f = x_0 + \ker A$ and $C \cap D \subseteq \ker f$, we get $L = x_0 + \tilde{L} = \tilde{L} + C \cap D$, which is point 4.

$4 \implies 2$:
Let $L = \tilde{L} + C \cap D$ with $\tilde{L}$ a supporting subspace of $\cone(C-D)$. By \Cref{loc:characterization_of_supporting_subspaces.statement} point 3, there is an oriented linear map $A$ with $\ker A = \tilde{L}$ that is positive on $\cone(C-D) \supseteq C - D$. Fix $x_0 \in C \cap D$ and set $f(x) := A(x - x_0)$. By \Cref{loc:oriented_separation_of_convex_sets.statement} point 3, $f$ separates $C$ from $D$, and $\ker f = x_0 + \tilde{L} = L$, from the fact that $C \cap D \subseteq L$. This is point 2.
\end{proof}
\section{Lattice minima}
\label{loc:body.lattice_minima}
In the previous section, it was made apparent that the orders $K - G$ and $\cone(C-D)$ are of canonical importance: they are the minimum separating orders, and characterize all other separating orders. In this section we identify the remaining canonical minima: we show that the \emph{cross-lineality} $\lin(K-G)$ is the lattice minimum of the separating subspaces, and that its dehomogenization $\gamma_1^{-1}(\lin(K-G))$ is the minimum separating flat, and is moreover 
the joint supporting subspace recently introduced in~\cite{scottBilateralFacialReduction2026}. 

We also consider the generated face $F_K(G)$ and the generated supporting
subspace $H_K(G)$, which are respectively the 
minimum face and supporting subspace (in the containment order) of $K$
that contain $K \cap G$. Unlike typical definitions of generated faces~\cite{weisFaceGeneratedPoint2021}, we do not require that $G$
be contained in $K$. From this relaxation, we 
obtain the pairs $(F_K(G), F_G(K))$ and $(H_K(G), H_G(K))$ of mutually
minimal faces and supporting subspaces.
In the next section we show how the two pairs 
and the cross-lineality mutually characterize each other. We do this by first characterizing
the generated faces.
\subsection{Generated faces of convex cones}
\label{loc:body.lattice_minima.generated_faces_of_convex_cones}
\begin{theorem}[Characterization of the generated face of a convex cone]
\label{loc:characterization_of_the_generated_face_of_a_convex_cone.statement}
For convex cones $K, G \subseteq W$, the set $F_K(G)$, the face of $K$ generated by $G$, is equivalently
\begin{enumerate}
\item The face of $K$ that contains $K \cap G$ and that is contained in every other such face.
\item $\min_{K-G} K$
\item $\min_{K - (K \cap G)}K$
\item $\{k \in K \mid \exists w \in G \text{ s.t. } k \leq_{K} w\}$.
\end{enumerate}
\end{theorem}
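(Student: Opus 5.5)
The plan is to show that the sets in points 2, 3 and 4 coincide, and then that this common set is a face of $K$ satisfying the minimality property of point 1. Since point 1 fixes the object uniquely (at most one face can be contained in every other face containing $K \cap G$), this identifies all four descriptions with $F_K(G)$.

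\emph{Points 2, 3 and 4 agree.} Note that $K \cap G$ is a convex cone containing $0$. Hence $P_1 := K - G$ and $P_2 := K - (K\cap G)$ are preorders, with $K \subseteq P_2 \subseteq P_1$. In particular $K \ge_{P_i} 0$ for both. As in the proof of $3 \implies 4$ of \Cref{loc:characterization_of_the_faces_of_convex_cones.statement}, $0 \in \min_{P_i} K$, so $\min_{P_i} K = K \cap \lin(P_i) = \{k \in K \mid -k \in P_i\}$. I would then unpack the condition $-k \in P_i$ for $k \in K$ and show that each version is equivalent to point 4.
\begin{itemize}
\item If $-k = k' - g$ with $k' \in K$ and $g \in G$, then $g = k + k' \in K$, so $g \in K \cap G$ and $k \le_K g$.
\item Conversely, if $k \le_K w$ with $w \in G$, then $w = k + (w-k) \in K$. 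So $-k = (w-k) - w$ lies in $K - (K\cap G) \subseteq K - G$.
\end{itemize}
Thus the sets in points 2, 3 and 4 coincide, each being $\{k \in K \mid \exists g \in K\cap G,\ k \le_K g\}$.

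\emph{It is a face containing $K \cap G$.} By \Cref{loc:characterization_of_the_faces_of_convex_cones.statement} point 3, applied with the preorder $K - G$, the set $\min_{K-G} K$ is a face of $K$. It contains $K \cap G$, since any $x \in K\cap G$ satisfies $x \le_K x$; this is just the description in point 4.

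\emph{Minimality.} Let $F \triangleleft K$ with $K \cap G \subseteq F$, and let $k$ be in the set of point 4. Then there is $g \in K\cap G \subseteq F$ with $g - k \in K$. Writing $g = k + (g-k)$ as a sum of two elements of $K$ lying in $F$, the splitting property (\Cref{loc:characterization_of_the_faces_of_convex_cones.statement} point 1) gives $k \in F$. Hence the set is contained in every face containing $K \cap G$, which is point 1.

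The only delicate step is the bookkeeping in unpacking $-k \in K - G$: one must notice that the witness $g = k + k'$ automatically lies in $K$. This is what makes the intersection $K \cap G$ appear even though $G \not\subseteq K$ in general. Everything else is a direct application of earlier results.
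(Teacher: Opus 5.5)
Your proof is correct. Two parts match the paper's argument. The first is the identification of points 2, 3 and 4: you unpack $-k \in P$ and notice that the witness $g = k + k'$ is forced into $K$, hence into $K \cap G$. The second is the use of point 3 of \Cref{loc:characterization_of_the_faces_of_convex_cones.statement} to see that $\min_{K-G}K$ is a face containing $K\cap G$.

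Where you differ is the minimality step. The paper takes an arbitrary face $F \supseteq K \cap G$ and writes it as $F = \min_P K$ for some preorder $P$ with $K \ge_P 0$. It then observes that $K \cap G \subseteq F$ forces $K \cap G \le_P 0$, hence $K - (K\cap G) \subseteq P$. Finally it uses monotonicity of $P \mapsto \min_P K$ to conclude $\min_{K-(K\cap G)}K \subseteq F$.

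You instead apply the splitting property directly to $g = k + (g-k)$ with $g \in K\cap G \subseteq F$. In effect this is point 2 of the face characterization, that faces are order ideals for $\le_K$. Your route is shorter and needs only the algebraic definition of a face, not its representation as a minimum set.

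The paper's route is chosen to exhibit a principle: $K-(K\cap G)$ is the finest preorder with $K \cap G \le 0 \le K$, and the finest preorder yields the least face. That principle is reused later. It appears in the monotonicity remark for generated faces of convex sets, and in the faces-of-intersections theorem, where generated faces are intersected by intersecting their finest preorders.
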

\begin{proof}[\hypertarget{loc:characterization_of_the_generated_face_of_a_convex_cone.proof}{}Proof of \Cref{loc:characterization_of_the_generated_face_of_a_convex_cone.statement}]

First note that for any preorder $P \subseteq W$ for which $K \geq_{P} 0$, we have 
\begin{equation*}
\min_P K = \{x \in K \mid x \le_{P} K\} = \{x \in K \mid x \le_{P} 0\}.
\end{equation*}
$2 = 4$: With $P := K-G$, from the r.h.s. it suffices to characterize for which $x \in K$ the inequality $x \le_{K-G} 0$ holds. Note that this inequality is equivalently written as
\begin{equation*}
\exists w \in G: x \le_{K} w,
\end{equation*}
which is precisely the selecting property of point 4.

$4 = 3$: Point 4 is equivalently stated as $\{k \in K \mid \exists w \in G \cap K \text{ s.t. } k \leq_{K} w\}$, because $0 \le_{K} k \leq_{K} w  \implies 0 \leq_{K} w  \implies w \in K$. Then obtaining point 4 from point 3 follows from the same argument as in $(2 = 4)$, only with the cone $G \cap K$ instead of $G$.

$2 = 3 = 1$: The set of point 2 is a face of $K$ because it is a minimum set by \Cref{loc:characterization_of_the_faces_of_convex_cones.statement} point 3. It contains $K \cap G$ because $\forall x \in K \cap G, x  \in G  \implies x \geq_{G} 0  \implies x \leq_{K-G} 0$. Finally, it is the smallest such face: consider any $F \triangleleft K$ that contains $K \cap G$. The face $F$ is given by the minimization of some preorder $P \subseteq W$ by \Cref{loc:characterization_of_the_faces_of_convex_cones.statement} point 3. Then $K \cap G \subseteq F  \implies K \cap G \leq_{P} 0$, and with the fact that $K \geq_{P} 0$, we find that $K - (K \cap G) \geq_{P} 0$, i.e., $K - (K \cap G) \subseteq P$. Then since the minimum set is monotone with the underlying preorder,  by considering the generated face as in point 3 we have that
\begin{equation*}
\min_{K - (K \cap G)} K \subseteq \min_P K = F,
\end{equation*}
showing that $\min_{K - (K \cap G)} K$ is indeed the minimum face containing $K \cap G$.
\end{proof}
\begin{remark}
\label{loc:characterization_of_the_generated_face_of_a_convex_cone.remark}
For any preorder $P \supseteq K$, the face $\min_P K$ admits various other expressions. The condition $x \le_{P} 0$ on a point $x \in K$ is equivalent to $x \sim_{P} 0$, so
\begin{equation*}
\min{}_P K = K \cap (-P) = K \cap \lin(P).
\end{equation*}
For $P = K-G$, this reads 
\begin{equation*}
F_K(G) = K \cap (G-K) = K \cap \lin(K-G),
\end{equation*}
 and for $P = K - (K\cap G)$ this reads 
\begin{equation*}
F_K(G) = K \cap ((K \cap G)-K) = K \cap \lin(K- (K \cap G)).
\end{equation*}
\end{remark}
The generated face $F_K(\cdot)$ as a set-valued map has fixed points which are precisely the faces of $K$. Writing out this fact as equations provides novel characterizations for the faces of convex cones.
\begin{corollary}[Faces are fixed points of the generated face]
\label{loc:faces_are_fixed_points_of_the_generated_face.statement}
For a convex cone $K \subseteq W$, a convex cone $S \subseteq W$ is a face of $K$ if and only if
\begin{equation*}
S = \min_{K-S}K, \quad \text{equivalently} \quad S = K \cap (S-K), \quad\text{or}\quad S = K \cap \lin(K-S).
\end{equation*}
\end{corollary}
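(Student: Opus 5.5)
The plan is to apply \Cref{loc:characterization_of_the_generated_face_of_a_convex_cone.statement} and \Cref{loc:characterization_of_the_generated_face_of_a_convex_cone.remark} with $G := S$. Those results give
\begin{equation*}
F_K(S) = \min_{K-S} K = K \cap (S-K) = K \cap \lin(K-S).
\end{equation*}
So the three displayed equations are the same statement, namely $S = F_K(S)$. It therefore suffices to show that $S \triangleleft K$ if and only if $S = F_K(S)$.

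For the reverse direction, suppose $S = F_K(S)$. By point 1 of \Cref{loc:characterization_of_the_generated_face_of_a_convex_cone.statement}, $F_K(S)$ is a face of $K$. Hence $S$ is a face.

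For the forward direction, suppose $S \triangleleft K$. Then $S \subseteq K$, so $K \cap S = S$. By point 1 of \Cref{loc:characterization_of_the_generated_face_of_a_convex_cone.statement}, $F_K(S)$ is the smallest face of $K$ containing $K \cap S = S$. Since $S$ is itself such a face, minimality gives $F_K(S) \subseteq S$. The inclusion $S = K\cap S \subseteq F_K(S)$ holds by definition, so $F_K(S) = S$.

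As a sanity check one can argue directly, without the theorem. By point 2 of \Cref{loc:characterization_of_the_faces_of_convex_cones.statement}, every $x\in K$ with $x \le_K s$ for some $s \in S$ lies in $S$. This is exactly the description in point 4 of \Cref{loc:characterization_of_the_generated_face_of_a_convex_cone.statement} of $\{k\in K \mid \exists w\in S,\ k\le_K w\}$, which therefore equals $S$.

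The only delicate point is that the generated-face theorem must apply to an arbitrary convex cone $S$, not only to cones contained in $K$. The theorem was stated for arbitrary convex cones $K,G$, so this is covered. The remark's identities $\min_P K = K\cap(-P) = K\cap\lin(P)$ require $P \supseteq K$, and $P = K-S$ contains $K$ because $0\in S$. No further obstacle is expected.
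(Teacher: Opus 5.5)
Your proposal is correct and follows the paper's proof. You take $G := S$ in the characterization of the generated face of a convex cone, where points 1 and 2 give that $S \triangleleft K$ if and only if $S = \min_{K-S}K$, and then use the remark to rewrite this as $K \cap (S-K)$ and $K \cap \lin(K-S)$. The only difference is that you spell out the fixed-point argument the paper leaves implicit: $F_K(S)$ is always a face, and minimality forces $F_K(S) = S$ whenever $S \triangleleft K$.
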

\begin{proof}[\hypertarget{loc:faces_are_fixed_points_of_the_generated_face.proof}{}Proof of \Cref{loc:faces_are_fixed_points_of_the_generated_face.statement}]

The first formula follows from equivalence of \Cref{loc:characterization_of_the_generated_face_of_a_convex_cone.statement} points 1 and 2, and the two other formulas follow by reformulating the r.h.s. as in \Cref{loc:characterization_of_the_generated_face_of_a_convex_cone.remark}. 
\end{proof}
\subsection{The cross-lineality}
\label{loc:body.lattice_minima.the_cross:lineality}
We show that the cross-lineality $\lin(K-G)$ is the lattice minimum of the separating
subspaces, and that it admits two decompositions: that it decomposes into the two
mutually generated faces of $K$ and $G$, and that it factors with the intersection $K \cap G$.
We first list some properties of the lineality for future reference,
leaving the proof to the appendix.
\begin{lemma}[Characterization of the lineality]
\label{loc:characterization_of_the_lineality.statement}
Given a convex cone $K \subseteq W$, the lineality $\lin(K)$ is, equivalently,
\begin{enumerate}
\item $K \cap -K$
\item $\{x \sim_{K} 0 \mid x \in K\}.$
\item The largest subspace contained in $K$; it contains every subspace contained in $K$. 
\item The minimum supporting subspace of $K$.
\item The minimum face of $K$.
\end{enumerate}
\end{lemma}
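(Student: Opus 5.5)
I will prove the equivalences by cycling through the points. Each step will take point 1, $K \cap -K$, as the anchor and use the earlier results on supporting subspaces and faces of convex cones.

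\emph{Points 1 and 2.} Since $x \sim_K 0$ means $x \ge_K 0$ and $x \le_K 0$, that is, $x \in K$ and $-x \in K$, the set in point 2 is exactly $K \cap -K$. The qualifier $x \in K$ is redundant here.

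\emph{Point 3.} The set $K \cap -K$ is closed under addition and positive scaling, and it is symmetric, hence closed under negation. It contains $0$, so it is a subspace, and it is contained in $K$. Conversely, if $V \subseteq K$ is a subspace, then $V = -V \subseteq -K$, so $V \subseteq K \cap -K$. Thus $\lin(K)$ is the largest subspace contained in $K$.

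\emph{Point 4.} By \Cref{loc:characterization_of_supporting_subspaces.statement} point 4, taking $P := K$ (which trivially satisfies $K \ge_P 0$), the subspace $\{x \mid x \sim_K 0\} = \lin(K)$ is supporting. For minimality, let $U$ be any supporting subspace. Then $U = \lin(P)$ for some preorder $P \supseteq K$, again by \Cref{loc:characterization_of_supporting_subspaces.statement} point 4. Since $K \subseteq P$ gives $-K \subseteq -P$, we get $K \cap -K \subseteq P \cap -P = U$.

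\emph{Point 5.} $\lin(K) = K \cap \lin(K)$, and $\lin(K)$ is supporting by point 4. Hence it is a face by \Cref{loc:characterization_of_supporting_subspaces.statement} point 5. For minimality, let $F = K \cap \ker\psi$ be any face, where $\psi$ is positive on $K$. For $x \in \lin(K)$ we have $\psi(x) \ge 0$ and $\psi(-x) \ge 0$, so $\psi(x) = 0$ by pointedness of $Y_+$. Therefore $x \in F$.

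\emph{Main obstacle.} The only step needing care is the minimality in point 5. One might worry that the face must be compared via its supporting subspace, but the direct kernel argument above avoids this. Alternatively, one can use the splitting property of \Cref{loc:characterization_of_the_faces_of_convex_cones.statement} point 1: faces contain $0$, and $x + (-x) = 0 \in F$ with $x, -x \in K$, so $x \in F$.
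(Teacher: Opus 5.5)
Your proof is correct and follows essentially the same route as the paper: points 1--3 by the symmetry argument, and the supporting and face properties of $\lin(K)$ via \Cref{loc:characterization_of_supporting_subspaces.statement} points 4 and 5. The only difference is in the minimality steps. The paper uses the order-ideal properties with $z = 0$ (point 2 of \Cref{loc:characterization_of_supporting_subspaces.statement} and of \Cref{loc:characterization_of_the_faces_of_convex_cones.statement}), while you use $U = \lin(P)$ with $K \subseteq P$ and the kernel definition of faces, which is equally direct.
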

\begin{lemma}[The cross-lineality is the minimum separating subspace]
\label{loc:the_cross:lineality_is_the_minimum_separating_subspace.statement}
For convex cones $K, G \subseteq W$, the subspace $\lin(K-G)$ is the minimum separating subspace of $K$ and $G$.
\end{lemma}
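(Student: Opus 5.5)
The plan is to reduce everything to facts about the single convex cone $K-G$, using the characterizations of separating subspaces and of the lineality already established. By \Cref{loc:characterization_of_separating_subspaces.statement} point 1, the separating subspaces of $K$ and $G$ are exactly the supporting subspaces of $K-G$. First I will check that $K-G$ is a convex cone containing the origin. It is closed under addition and positive scaling because $K$ and $G$ are, and $0 = 0 - 0$ lies in it. Once this is in hand, the statement becomes the claim that the minimum supporting subspace of the convex cone $K-G$ is $\lin(K-G)$.

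That claim is \Cref{loc:characterization_of_the_lineality.statement} point 4 applied to the cone $K-G$, so the proof is essentially a one-line chain of citations. To guard against relying on a lemma whose proof is deferred, I will also sketch a direct argument in two steps.

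\emph{Step 1: $\lin(K-G)$ is itself a separating subspace.} Take the quotient map $\pi_{K-G}$ of \Cref{loc:preorders_are_characterized_by_oriented_maps.statement}. It is an oriented map that is positive on $K-G$, and its kernel is $\lin(K-G)$. Hence $\lin(K-G)$ is a supporting subspace of $K-G$ by \Cref{loc:characterization_of_supporting_subspaces.statement} point 3. Alternatively, $\lin(K-G) = \lin(P)$ for the preorder $P := K-G$, which satisfies $G \le_P K$, so \Cref{loc:characterization_of_separating_subspaces.statement} point 3 applies directly.

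\emph{Step 2: $\lin(K-G)$ is contained in every separating subspace.} Let $U$ be separating. By \Cref{loc:characterization_of_separating_subspaces.statement} point 3, $U = \lin(P)$ for some preorder $P$ with $G \le_P K$. From the discussion preceding that theorem, this condition is equivalent to $K - G \subseteq P$. Lineality is monotone under inclusion: if $A \subseteq B$ then $A \cap -A \subseteq B \cap -B$. Therefore $\lin(K-G) \subseteq \lin(P) = U$.

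No step here is a real obstacle. The only point needing care is that the characterization of separating subspaces in point 3 quantifies over all preorders $P$ with $G \le_P K$, so that the inclusion $K-G \subseteq P$ is available for every separating subspace and not just for kernels of particular maps. That point is settled by the earlier remark that $G \le_P K \iff K-G \subseteq P$.
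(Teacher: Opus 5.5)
Your proposal is correct, and its main line is exactly the paper's proof. Point 1 of the characterization of separating subspaces identifies the separating subspaces of $K$ and $G$ with the supporting subspaces of $K-G$, and point 4 of the characterization of the lineality makes $\lin(K-G)$ the minimum among these. Your supplementary direct argument is also sound, though not needed: Step 1 uses $P := K-G$, and Step 2 uses $K-G \subseteq P$ together with monotonicity of $\lin$.
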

\begin{proof}[\hypertarget{loc:the_cross:lineality_is_the_minimum_separating_subspace.proof}{}Proof of \Cref{loc:the_cross:lineality_is_the_minimum_separating_subspace.statement}]

Considering \Cref{loc:characterization_of_the_lineality.statement} point 4, $\lin(K-G)$ is the smallest
supporting subspace of $K-G$, and then by 
\Cref{loc:characterization_of_separating_subspaces.statement} point 1, it is the smallest separating subspace of $K$ and $G$.
\end{proof}
\begin{lemma}[Cross-lineality from the difference of generated faces]
\label{loc:cross:lineality_from_the_difference_of_generated_faces.statement}
For convex cones $K, G \subseteq W$,
\begin{equation*}
\lin(K-G)= F_K(G)-F_G(K).
\end{equation*}
\end{lemma}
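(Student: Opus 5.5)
We prove $\lin(K-G)= F_K(G)-F_G(K)$ by viewing it as a face of the difference cone and applying the decomposition of \Cref{loc:decomposition_of_faces_of_difference_of_convex_cones.statement}. By \Cref{loc:characterization_of_the_lineality.statement} point 5, $F := \lin(K-G)$ is the minimum face of $K-G$. By point 4 it is also the minimum supporting subspace $U := \lin(K-G)$, and it satisfies $F = U \cap (K-G)$, because $\lin(K-G) \subseteq K-G$.

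Applying \Cref{loc:decomposition_of_faces_of_difference_of_convex_cones.statement} to this face and this supporting subspace gives
\begin{equation*}
\lin(K-G) = (U \cap K) - (U \cap G).
\end{equation*}
It therefore remains to identify each piece with a generated face.

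For the first piece, \Cref{loc:characterization_of_the_generated_face_of_a_convex_cone.remark} gives directly $F_K(G) = K \cap \lin(K-G) = U \cap K$.

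For the second piece, apply the same remark with the roles of $K$ and $G$ exchanged. This yields $F_G(K) = G \cap \lin(G-K)$. Since $G-K = -(K-G)$ and the lineality is symmetric under negation, $\lin(G-K) = -\lin(K-G) = \lin(K-G) = U$. Hence $F_G(K) = U \cap G$, and substituting both identifications proves the statement.

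The only delicate point is checking that the remark's formula applies with the roles swapped. This needs the preorder $G-K$ to contain $G$, which holds because $0 \in K$. The symmetry $\lin(-P)=\lin(P)$ is immediate from $\lin(P) = P\cap -P$.

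As a fallback, we can verify both inclusions directly. For $\supseteq$: if $k \in K\cap\lin(K-G)$ and $g \in G \cap \lin(K-G)$, then $k - g$ lies in the subspace $\lin(K-G)$. For $\subseteq$: use \Cref{loc:characterization_of_separating_subspaces.statement} point 4 applied to the separating subspace $\lin(K-G)$. Any $x \in \lin(K-G)\subseteq K-G$ can be written as $x=k-g$, and that point forces $k, g \in \lin(K-G)$, so $x$ lies in the difference of the two generated faces.
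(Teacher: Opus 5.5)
Your proof is correct and follows the paper's argument: you use the lineality characterization to identify $\lin(K-G)$ as both the minimum face and the minimum supporting subspace of $K-G$, apply the decomposition of faces of a difference of cones, and then match the two pieces with $F_K(G)$ and $F_G(K)$ via the remark on generated faces. You also check explicitly that the remark applies with $K$ and $G$ swapped, using $\lin(G-K)=\lin(K-G)$, which the paper leaves implicit.
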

\begin{proof}[\hypertarget{loc:cross:lineality_from_the_difference_of_generated_faces.proof}{}Proof of \Cref{loc:cross:lineality_from_the_difference_of_generated_faces.statement}]

By \Cref{loc:characterization_of_the_lineality.statement} points 4 and 5, the subspace $\lin(K-G)$ both supports $K-G$ and is its minimum face. Applying \Cref{loc:decomposition_of_faces_of_difference_of_convex_cones.statement} with $F = \lin(K-G)$, so that $F_K = \lin(K-G) \cap K$ and $F_G = \lin(K-G) \cap G$, we find that
\begin{equation*}
\lin(K-G) = \bigl(\lin(K-G) \cap K\bigr) - \bigl(\lin(K-G) \cap G\bigr) = F_K(G) - F_G(K),
\end{equation*}
where the second equality follows by \Cref{loc:characterization_of_the_generated_face_of_a_convex_cone.remark}.
\end{proof}
The next two results detail the case where one convex cone is contained in the other, in which case the cross-lineality recovers the generated supporting subspace $H_K(G)$.
\begin{proposition}[Separation reduces to containment]
\label{loc:separation_reduces_to_containment.statement}
Let $K \subseteq W$ be a convex cone and let the convex cone $S \subseteq W$ be either a subcone of $K$ or a subspace of $W$.
\begin{enumerate}
\item A subspace $U \subseteq W$ is a separating subspace of $K$ and $S$ if and only if $U$ is a supporting subspace of $K$ with $S \subseteq U$.
\item $\lin(K-S)$ is the minimum supporting subspace of $K$ containing $S$.
\end{enumerate}
\end{proposition}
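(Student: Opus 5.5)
The plan is to prove point 1 directly from \Cref{loc:characterization_of_separating_subspaces.statement} and then obtain point 2 from point 1 together with \Cref{loc:the_cross:lineality_is_the_minimum_separating_subspace.statement}. Throughout, I will treat the two cases for $S$ (a subcone of $K$, or a subspace of $W$) in parallel. The two facts I will lean on are: a subspace is a subcone of itself whose only face is itself, and every subcone $S\subseteq K$ satisfies $S\le_K$-type relations with $K$.

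For point 1, direction $(\Rightarrow)$, I take a separating subspace $U$. By \Cref{loc:characterization_of_separating_subspaces.statement} point 3, $U=\lin(P)$ for a preorder $P$ with $S\le_P 0\le_P K$.
\begin{itemize}
\item If $S\subseteq K$, then every $s\in S$ satisfies $0\le_P s\le_P 0$. Hence $s\sim_P 0$, so $s\in\lin(P)=U$.
\item If $S$ is a subspace, then $S\le_P 0$ gives $-S=S\le_P 0$ as well, so $S\subseteq P\cap -P=\lin(P)$.
\end{itemize}
In both cases $U$ supports $K$, by \Cref{loc:characterization_of_separating_subspaces.statement} point 4.

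For direction $(\Leftarrow)$, I take a supporting subspace $U$ of $K$ with $S\subseteq U$, and verify point 4 of \Cref{loc:characterization_of_separating_subspaces.statement}.
\begin{itemize}
\item $U$ supports $S$: $U\cap S=S$, and $S$ is a face of itself (apply \Cref{loc:characterization_of_supporting_subspaces.statement} point 5, using \Cref{loc:characterization_of_the_faces_of_convex_cones.statement} point 1 trivially).
\item Absorption: let $k\in K$, $s\in S$ with $k-s\in U$. Since $s\in U$ and $U$ is a subspace, $k\in U$, and trivially $s\in U$.
\end{itemize}
This establishes point 1.

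For point 2, \Cref{loc:the_cross:lineality_is_the_minimum_separating_subspace.statement} says $\lin(K-S)$ is the minimum separating subspace of $K$ and $S$. By point 1, the separating subspaces are exactly the supporting subspaces of $K$ containing $S$. The minimum of the first family is therefore the minimum of the second, which is the claim.

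The main obstacle is only the bookkeeping of the two cases in $(\Rightarrow)$, where the sign trick for subspaces must be used. One should also make sure that "$S$ a subspace" is read as a convex cone containing the origin, so that the earlier theorems apply.
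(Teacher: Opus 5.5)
Your proof is correct and follows the paper's structure: the forward direction is the paper's argument via point 3 of the characterization of separating subspaces, with the two cases for $S$ written out explicitly, and point 2 is obtained from the minimum-separating-subspace lemma exactly as in the paper. The only deviation is in $(\Leftarrow)$, where you check the algebraic absorption characterization (point 4), including that $U$ supports $S$ because $S$ is a face of itself; the paper instead takes an oriented map $\psi$ positive on $K$ with $\ker\psi = U$, notes $\psi(S) = 0 \le \psi(K)$, and invokes point 2. Both are one-line checks.
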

\begin{proof}[\hypertarget{loc:separation_reduces_to_containment.proof}{}Proof of \Cref{loc:separation_reduces_to_containment.statement}]

Point 1. $(\Rightarrow)$ By \Cref{loc:characterization_of_separating_subspaces.statement} point 4, $U$ supports $K$. By point 3 of that statement, $U = \{u \in W \mid u \sim_{P} 0\}$ for a preorder $P$ such that $S \le_{P} K$. Then $0  \in K  \implies S \le_{P} 0$. Then the reverse inequality $S \ge_{P} 0$ also holds, because either $S \subseteq K$, or $S$ is a subspace in which case $S = -S$, so the first inequality implies the reverse. Hence $S \subseteq U$.
$(\Leftarrow)$ By \Cref{loc:supporting_subspace_definition_from_oriented_maps.statement}, $U = \ker \psi$ for an oriented map $\psi$ positive on $K$. Then $\psi(S) = 0 \le \psi(K)$, so $U$ is separating by \Cref{loc:characterization_of_separating_subspaces.statement} point 2.

Point 2. \Cref{loc:the_cross:lineality_is_the_minimum_separating_subspace.statement} states for the pair $(K, S)$ that the subspace $\lin(K-S)$ is the minimum separating subspace of $K$ and $S$. Point 1 shown above then identifies $\lin(K-S)$ as the minimum supporting subspace of $K$ containing $S$.
\end{proof}
\begin{corollary}[The generated supporting subspace is a cross-lineality]
\label{loc:the_generated_supporting_subspace_is_a_cross:lineality.statement}
Let $K, G \subseteq W$ be convex cones. The generated supporting
subspace is given by 
\begin{equation*}
H_K(G)=  \lin(K- (K \cap G)).
\end{equation*}
\end{corollary}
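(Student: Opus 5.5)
The plan is to reduce the corollary to \Cref{loc:separation_reduces_to_containment.statement} point 2, applied with $S := K \cap G$. This $S$ is a convex cone, being an intersection of convex cones containing the origin, and it is a subcone of $K$. The hypothesis of that proposition is therefore satisfied, and we conclude that $\lin(K - (K\cap G))$ is the minimum supporting subspace of $K$ containing $K \cap G$.

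It then remains to match this with the definition of $H_K(G)$, which is the minimum supporting subspace of $K$ containing $K \cap G$. The two descriptions coincide verbatim, so the identity follows.

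For robustness, I would also check that this minimum exists and is the same object if $H_K(G)$ is read through faces. A subspace $U$ is supporting for $K$ and contains $K\cap G$ exactly when $U \cap K$ is a face (\Cref{loc:characterization_of_supporting_subspaces.statement} point 5) containing $K \cap G$. The minimality statement of \Cref{loc:separation_reduces_to_containment.statement} already guarantees a least element in the containment order, so no separate existence argument is needed.

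As a consistency check, I would compare with \Cref{loc:characterization_of_the_generated_face_of_a_convex_cone.remark}. There $K \cap \lin(K-(K\cap G)) = F_K(G)$, so $H_K(G)\cap K = F_K(G)$, which is what one expects from the generated supporting subspace.

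The only potential obstacle is definitional: making sure $H_K(G)$ is understood as minimal among supporting subspaces containing $K\cap G$, rather than containing $G$. The latter need not exist, since $G$ may not lie in any supporting subspace of $K$. With the definition as stated at the start of \Cref{loc:body.lattice_minima}, no further work is required.
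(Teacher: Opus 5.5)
Your proof is correct and is exactly the paper's argument: apply \Cref{loc:separation_reduces_to_containment.statement} point 2 with $S := K \cap G$, which is a subcone of $K$, and match the result with the definition of $H_K(G)$. One side remark is wrong, though harmless to the proof: a minimum supporting subspace of $K$ containing $G$ always exists. $W$ itself is supporting, as the kernel of the zero map. Supporting subspaces are also closed under intersection, by point 2 of \Cref{loc:characterization_of_supporting_subspaces.statement}. That subspace is simply, in general, a different object from $H_K(G)$.
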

\begin{proof}[\hypertarget{loc:the_generated_supporting_subspace_is_a_cross:lineality.proof}{}Proof of \Cref{loc:the_generated_supporting_subspace_is_a_cross:lineality.statement}]

Follows from \Cref{loc:separation_reduces_to_containment.statement} point 2 with $S := K \cap G$.
\end{proof}
\begin{lemma}[Cross-lineality with containment]
\label{loc:cross:lineality_with_containment.statement}
Given convex cones $S \subseteq K \subseteq W$,
\begin{equation*}
\lin(K - S) = F_K(S) - S.
\end{equation*}
\end{lemma}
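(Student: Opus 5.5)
Since $S \subseteq K$, the natural approach is to specialize \Cref{loc:cross:lineality_from_the_difference_of_generated_faces.statement} to the pair $(K,S)$, which gives
\begin{equation*}
\lin(K-S) = F_K(S) - F_S(K).
\end{equation*}
It then suffices to show that $F_S(K) = S$. That is, the face of $S$ generated by $K$ is all of $S$.

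For this I will use \Cref{loc:characterization_of_the_generated_face_of_a_convex_cone.statement} point 1 with the roles of the two cones swapped. The generated face $F_S(K)$ is the smallest face of $S$ containing $S \cap K$. Since $S \subseteq K$, we have $S \cap K = S$, and the only face of $S$ containing $S$ is $S$ itself. Therefore $F_S(K) = S$. As a cross-check via point 4 of the same theorem, $F_S(K) = \{ s \in S \mid \exists w \in K,\ s \le_S w\}$, and every $s \in S$ qualifies by taking $w = s \in K$.

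Substituting this into the displayed identity gives $\lin(K-S) = F_K(S) - S$. I expect no serious obstacle. The only point needing care is that \Cref{loc:cross:lineality_from_the_difference_of_generated_faces.statement} is stated for arbitrary pairs of convex cones, so it applies to $(K,S)$ without any containment hypothesis.

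If a self-contained argument is preferred, I would instead argue the two inclusions directly. For $\supseteq$: $F_K(S) = K \cap (S-K)$ by \Cref{loc:characterization_of_the_generated_face_of_a_convex_cone.remark}, so $F_K(S) \subseteq \lin(K-S)$. Also $-S \subseteq K - S$, and since $S \subseteq K$, $-S \subseteq S - K$; hence $-S \subseteq \lin(K-S)$. Closure of the subspace under addition then gives $F_K(S) - S \subseteq \lin(K-S)$. The reverse inclusion $\subseteq$ is exactly the decomposition content of \Cref{loc:decomposition_of_faces_of_difference_of_convex_cones.statement}, which is why routing through the earlier lemma is cleaner.
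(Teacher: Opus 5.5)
Your proposal is correct and matches the paper's proof: specialize \Cref{loc:cross:lineality_from_the_difference_of_generated_faces.statement} to the pair $(K,S)$, then note $F_S(K) = S$ because $S \cap K = S$ and $S$ is a face of itself. Your alternative direct argument for the inclusion $\supseteq$ is also sound, but it is not needed.
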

\begin{proof}[\hypertarget{loc:cross:lineality_with_containment.proof}{}Proof of \Cref{loc:cross:lineality_with_containment.statement}]

Applying \Cref{loc:cross:lineality_from_the_difference_of_generated_faces.statement} to the pair $(K, S)$ gives
\begin{equation*}
\lin\bigl(K-S\bigr) = F_K(S) - F_{S}(K) = F_K(S) - S,
\end{equation*}
since $F_S(K) = S$ because $S \subseteq K$ and $S$ is a face of itself.
\end{proof}
\begin{theorem}[Characterization of the cross-lineality]
\label{loc:cross_lineality.statement}
For two convex cones $K, G \subseteq W$, the \emph{cross-lineality} of the pair $(K, G)$ is equivalently defined as
\begin{enumerate}
\item $(K-G) \cap (G -K)=\lin(K-G)=\lin(G-K).$
\item The minimum separating subspace of $K$ and $G$.
\item $H_K(G)+ H_G(K)$
\item $F_K(G) - F_G(K)$
\end{enumerate}
\end{theorem}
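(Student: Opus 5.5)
Taking $\lin(K-G)$ as the reference object, I will show each of the four points equals it. Point 2 is \Cref{loc:the_cross:lineality_is_the_minimum_separating_subspace.statement} and point 4 is \Cref{loc:cross:lineality_from_the_difference_of_generated_faces.statement}, so both are already available. For point 1, by definition $\lin(K-G) = (K-G)\cap -(K-G) = (K-G)\cap(G-K)$. Since $G-K = -(K-G)$ and $\lin(P)=\lin(-P)$ for any cone $P$, this gives $\lin(G-K)=\lin(K-G)$ and settles point 1.

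The remaining work is point 3, $\lin(K-G) = H_K(G)+H_G(K)$. By \Cref{loc:the_generated_supporting_subspace_is_a_cross:lineality.statement}, $H_K(G) = \lin(K-(K\cap G))$ and $H_G(K)=\lin(G-(K\cap G))$. I would then apply \Cref{loc:cross:lineality_with_containment.statement} with $S := K\cap G\subseteq K$, and again with $S\subseteq G$, to get
\begin{equation*}
H_K(G) = F_K(K\cap G) - (K\cap G), \qquad H_G(K) = F_G(K\cap G) - (K\cap G).
\end{equation*}
Next I would identify $F_K(K\cap G)$ with $F_K(G)$. By \Cref{loc:characterization_of_the_generated_face_of_a_convex_cone.statement} point 1, both are the smallest face of $K$ containing $K\cap G$, because $K\cap(K\cap G) = K\cap G$. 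The same argument gives $F_G(K\cap G)=F_G(K)$.

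Summing the two expressions gives $F_K(G) - (K\cap G) + F_G(K) - (K\cap G)$. I then need this to equal $F_K(G)-F_G(K)$, which I would prove by two inclusions. For $\supseteq$, use $0\in K\cap G$: any $a-b$ with $a\in F_K(G)$, $b\in F_G(K)$ equals $(a-0)+(0-b)$, and since $H_G(K)$ is a subspace, $-b\in H_G(K)$ because $b \in H_G(K)$. For $\subseteq$, note $K\cap G\subseteq F_K(G)$ and $K\cap G\subseteq F_G(K)$. Then $-(K\cap G)\subseteq -F_G(K)$ and $K\cap G\subseteq F_K(G)$, and all four pieces lie in the subspace $F_K(G)-F_G(K)=\lin(K-G)$, which is closed under sums and negation.

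A cleaner route to point 3 avoids this bookkeeping. First, $H_K(G)\subseteq \lin(K-G)$ because $H_K(G)=\lin(K-(K\cap G))$, $K-(K\cap G)\subseteq K-G$, and lineality is monotone. By symmetry, using point 1, $H_G(K)\subseteq\lin(G-K)=\lin(K-G)$. Their sum therefore lies in the subspace $\lin(K-G)$. Conversely, $F_K(G)\subseteq H_K(G)$ because $H_K(G)$ contains $F_K(K\cap G)$, and $F_G(K)\subseteq H_G(K)$, so point 4 gives $\lin(K-G)\subseteq H_K(G)+H_G(K)$. The main obstacle is checking $F_K(G)\subseteq H_K(G)$. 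From the formula $H_K(G)=F_K(K\cap G)-(K\cap G)$, taking $0\in K\cap G$ gives $F_K(K\cap G)\subseteq H_K(G)$, and $F_K(K\cap G)=F_K(G)$ by the minimality argument above.
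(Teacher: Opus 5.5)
Your proposal is correct. Your first route for point 3 is essentially the paper's argument. The paper also uses $H_K(G)=\lin(K-(K\cap G))$ and applies \Cref{loc:cross:lineality_with_containment.statement} twice. It reaches $F_K(G)-(K\cap G)-F_G(K)+(K\cap G)$ and then absorbs the $K\cap G$ terms because they lie in both generated faces. For the inclusion $\subseteq$, the paper relies on the faces being convex cones that contain $K\cap G$, whereas you rely on $\lin(K-G)$ being a subspace; either works. You also make explicit the identification $F_K(K\cap G)=F_K(G)$, which the paper uses without comment.

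Your second, sandwich route is a mild but genuine variant. Monotonicity of $\lin$ gives $H_K(G)+H_G(K)\subseteq\lin(K-G)$ with no Minkowski bookkeeping. Point 4, together with $F_K(G)\subseteq H_K(G)$ and $F_G(K)\subseteq H_G(K)$, gives the reverse inclusion. It is shorter and makes the dependence on point 4 transparent, while the paper's version is a single equality chain. There is an even quicker justification of $F_K(G)\subseteq H_K(G)$ that bypasses the containment lemma. By point 5 of the supporting-subspace characterization, $H_K(G)\cap K$ is a face of $K$ containing $K\cap G$, so minimality of $F_K(G)$ gives the inclusion.
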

\begin{proof}[\hypertarget{loc:cross_lineality.proof}{}Proof of \Cref{loc:cross_lineality.statement}]

$1 = 2$: \Cref{loc:the_cross:lineality_is_the_minimum_separating_subspace.statement}.

$1 = 4$: \Cref{loc:cross:lineality_from_the_difference_of_generated_faces.statement}

$4 = 3$: 
By applying \Cref{loc:cross:lineality_with_containment.statement} twice, we have
\begin{align*}
&\lin(K - (K \cap G)) + \lin(G - (K \cap G))\\
=&\lin(K - (K \cap G)) - \lin(G - (K \cap G))
= F_K(G) - K \cap G - F_G(K) + K \cap G,
\end{align*}

and the r.h.s. equals point 4 because $K \cap G$ lies in both generated faces.
\end{proof}
Finally, we show the relation between the generated face and supporting subspace.
\begin{proposition}[Generated face spans the lineality]
\label{loc:generated_face_spans_the_lineality.statement}
For convex cones $K, G \subseteq W$,
\begin{equation*}
H_K(G) = \Span(F_K(G)).
\end{equation*}
\end{proposition}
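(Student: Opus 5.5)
We need to show $H_K(G)=\Span(F_K(G))$. By \Cref{loc:the_generated_supporting_subspace_is_a_cross:lineality.statement}, $H_K(G)=\lin(K-S)$ with $S:=K\cap G$. By \Cref{loc:cross:lineality_with_containment.statement}, $\lin(K-S)=F_K(S)-S$. The proof therefore reduces to two identities: $F_K(S)=F_K(G)$, and $F_K(S)-S=\Span(F_K(S))$.

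For the first identity, \Cref{loc:characterization_of_the_generated_face_of_a_convex_cone.statement} point 3 gives $F_K(G)=\min_{K-(K\cap G)}K$. Applying point 2 to the pair $(K,S)$ gives $F_K(S)=\min_{K-S}K$. These are the same set, so the first identity holds.

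For the second identity, write $F:=F_K(S)$ and note $S\subseteq F$. We prove the two inclusions.

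\begin{itemize}
\item[$(\subseteq)$] Since $F_K(S)-S\subseteq F-F$ and $F$ is a convex cone, $F-F=\Span(F)$.
\item[$(\supseteq)$] Since $F_K(S)-S=\lin(K-S)$ is a subspace, it suffices to show it contains $F$. Each $f\in F$ can be written as $f-0$, and $0\in S$ because $S$ is a convex cone containing the origin. Hence $F\subseteq F-S$. A subspace containing $F$ contains $\Span(F)$.
\end{itemize}

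Combining the steps gives $H_K(G)=\lin(K-S)=F_K(S)-S=\Span(F_K(S))=\Span(F_K(G))$.

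An alternative route, which I would use as a cross-check, goes directly through minimality. The space $\Span F$ is a subspace with $(\Span F)\cap K=F$, since $F\subseteq(\Span F)\cap K\subseteq\lin(K-S)\cap K=F$ by \Cref{loc:characterization_of_the_generated_face_of_a_convex_cone.remark}. Hence $\Span F$ is supporting by \Cref{loc:characterization_of_supporting_subspaces.statement} point 5. It contains $S$, and it is contained in every supporting subspace containing $S$, because such a subspace cuts a face containing $S$, hence containing $F$.

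The main obstacle is only the bookkeeping of whether the cited results apply with $S=K\cap G\subseteq K$, which they do. The one substantive point is that $F_K(S)-S$ absorbs $F$ because $0\in S$; this is where the convention that cones contain the origin is essential.
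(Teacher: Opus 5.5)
Your proof is correct and is the paper's argument: the chain $H_K(G)=\lin(K-(K\cap G))=F_K(G)-(K\cap G)=\Span(F_K(G))$ comes from the generated-supporting-subspace corollary and the cross-lineality-with-containment lemma, and the last equality is shown by the same two inclusions, using $K\cap G\subseteq F_K(G)$ and $0\in K\cap G$. You also spell out the identification $F_K(K\cap G)=F_K(G)$, which the paper uses without comment, and your minimality cross-check is sound as well.
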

\begin{proof}[\hypertarget{loc:generated_face_spans_the_lineality.proof}{}Proof of \Cref{loc:generated_face_spans_the_lineality.statement}]

We show the following equality chain:
\begin{align*}
H_K(G) = &\lin(K - (K \cap G))\\
=&F_K(G) - (K\cap G) 
= \Span(F_K(G)).
\end{align*}
The first equality is \Cref{loc:the_generated_supporting_subspace_is_a_cross:lineality.statement}.
The second equality follows from \Cref{loc:cross:lineality_with_containment.statement} with $S := K \cap G$.

We show the last equality.
$(\subseteq)$: Follows because $K\cap G \subseteq F_K(G)$, which is immediate from \Cref{loc:characterization_of_the_generated_face_of_a_convex_cone.statement} point 4.
$(\supseteq)$: The left-hand side is a subspace by the equality with $\lin(K-(K\cap G))$. It contains both $K\cap G$ and $F_K(G)$ because $0$ is in both $K\cap G$ and $F_K(G)$, therefore it contains the span also.
\end{proof}
In this section we have shown that the generated faces, the generated supporting subspaces, and the cross-lineality determine one another. We summarize our findings in the following diagram.

\begin{center}
\tikzcdset{every label/.append style={font=\small}}
\begin{tikzcd}[row sep=8ex, column sep=3em]
  & \mathrm{lin}(K-G)
      \arrow[dl, shift left=1ex, "K \cap \mathrm{lin}(K-G)"] & \\
  {F_K(G),\; F_G(K)}
      \arrow[ur, shift left=1ex, "F_K(G)-F_G(K)"]
      \arrow[rr, shift left=1ex, "\mathrm{Span}\,F_K(G)"]
  & & {\begin{array}{c} H_K(G) = \mathrm{lin}(K - K \cap G) \\ H_G(K) = \mathrm{lin}(G - K \cap G) \end{array}}
      \arrow[ul, "H_K(G)+H_G(K)"']
      \arrow[ll, shift left=1ex, "K \cap H_K(G)"]
\end{tikzcd}
\end{center}

\subsection{Generated faces of convex sets}
\label{loc:body.lattice_minima.generated_faces_of_convex_sets}
We dehomogenize the generated faces of convex cones and the cross-lineality to obtain the
analogues for convex sets. We obtain the generated face $F_C(D)$: the smallest face of $C$
containing $C \cap D$ for arbitrary convex sets $C,D \subseteq X$.
This generalizes the generated face at a point~\cite{weisFaceGeneratedPoint2021},
which is recovered exactly by requiring that the argument $D$ be a singleton contained in $C$.
\begin{theorem}[Characterization of the generated face of a convex set]
\label{loc:characterization_of_the_generated_face_of_a_convex_set.statement}
For convex sets $C, D \subseteq X$ with homogenization cones $K, G \subseteq X \times \mathbb{R}$, the generated face 
$F_C(D)$ is equivalently:
\begin{enumerate}
\item The smallest face of $C$ that contains $C \cap D$.
\item $\gamma_1^{-1}(F_K(G))$.
\item $\min_{\cone(C-D)}C$ when $C \cap D \neq \varnothing$, otherwise it is $\varnothing$.
\item $\min_{\cone(C- (C \cap D))}C$ when $C \cap D \neq \varnothing$, otherwise it is $\varnothing$.
\end{enumerate}
\end{theorem}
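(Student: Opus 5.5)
The plan is to take point 2 as the working definition and derive the other points from it. This transports \Cref{loc:characterization_of_the_generated_face_of_a_convex_cone.statement} through the lattice isomorphism of \Cref{loc:dehomogenization_is_a_bijection_on_the_faces.statement}.

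\emph{Points 1 and 2.} By \Cref{loc:dehomogenization_is_a_bijection_on_the_faces.statement}, $\gamma_1^{-1}$ is an inclusion-preserving bijection between faces of $K$ and faces of $C$, with inverse $\cone\circ\gamma_1$. Since preimages commute with intersections, $\gamma_1^{-1}(K\cap G)=C\cap D$. Moreover $K\cap G$ is a homogenization cone, so $K\cap G=\cone\circ\gamma_1(C\cap D)$ by \Cref{loc:dehomogenization_is_a_bijection_between_sets_and_homogenization_cones.statement}. Hence for a face $T\triangleleft K$ we have
\[
K\cap G\subseteq T \iff C\cap D\subseteq \gamma_1^{-1}(T).
\]
The smallest face of $K$ containing $K\cap G$ is $F_K(G)$ (point 1 of the cone theorem). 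Transporting it through the isomorphism, the smallest face of $C$ containing $C\cap D$ is $\gamma_1^{-1}(F_K(G))$. This also shows that the smallest face in point 1 exists. When $C\cap D=\varnothing$, $\varnothing$ is a face of $C$ (take $f=\tilde\gamma_1$ with orientation $K$ in \Cref{loc:faces_of_convex_sets.statement}), so the answer is $\varnothing$, consistent with points 3 and 4.

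\emph{Points 3 and 4.} Assume $C\cap D\neq\varnothing$ and fix $x_0\in C\cap D$. Let $P:=\cone(C-D)$. It is a convex cone containing $0$, since $C-D$ is convex, and hence a preorder. I will show directly that $\min_P C$ is the smallest face containing $C\cap D$.

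First, $\min_P C$ is a face by \Cref{loc:characterization_of_the_faces_of_convex_sets.statement} point 3.

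Second, it contains $C\cap D$: for $x\in C\cap D$ and $c\in C$, we have $c-x\in C-D\subseteq P$, so $x\le_P c$.

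Third, it is the smallest such face. Let $F\triangleleft C$ contain $C\cap D$. Write $F=C\cap\ker f$ with $f$ positive on $C$ (point 4), and let $A$ be its linear part. Since $x_0\in F$, we have $f(x)=A(x-x_0)$. Let $x\in\min_P C$. Then $x\le_P x_0$, i.e.\ $x_0-x\in\cone(C-D)$, so $x_0-x=\lambda(c-d)$ with $\lambda\ge0$, $c\in C$, $d\in D$.

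The key step, and the main obstacle, is turning this into $A(x_0-x)\le 0$: we know nothing about $f$ on $D$, so $f(d)$ need not be $\le 0$. I would therefore route the argument through point 4 instead. Note that $\min_{\cone(C-(C\cap D))}C\subseteq \min_{\cone(C-D)}C$, because $\cone(C-(C\cap D))\subseteq\cone(C-D)$ and minimum sets grow with the preorder. For the smaller preorder the argument works: $x_0-x=\lambda(c-e)$ with $e\in C\cap D\subseteq F$. Then
\[
A(x_0-x)=\lambda\bigl(f(c)-f(e)\bigr)=\lambda f(c)\ge 0,
\]
while $f(x)\ge0$ gives $A(x_0-x)=-f(x)\le 0$. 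Since the codomain order is a partial order, $f(x)=0$, so $x\in F$. Thus point 4 is contained in every face containing $C\cap D$. Since it is itself such a face (by the same two arguments as above), point 4 equals point 1.

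For point 3, I need $\min_{\cone(C-D)}C\subseteq F_C(D)$, i.e.\ the preorder enlargement adds no new minima. Here I would homogenize: show that $\gamma_1^{-1}(\min_{K-G}K)\supseteq \min_{\cone(C-D)}C$. Take $x\in\min_{\cone(C-D)}C$, so $x_0-x=\lambda(c-d)$. Then
\[
\gamma_1(x)=\gamma_1(x_0)-\lambda\bigl(\gamma_1(c)-\gamma_1(d)\bigr)\le_{K-G}\gamma_1(x_0)\le_{K-G}0,
\]
where the first inequality holds because $\lambda\gamma_1(c)\in K$ and $\lambda\gamma_1(d)\in G$, and the second because $x_0\in D$. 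Hence $\gamma_1(x)\in K\cap(G-K)=F_K(G)$ by \Cref{loc:characterization_of_the_generated_face_of_a_convex_cone.remark}, so $x\in\gamma_1^{-1}(F_K(G))$, which is point 2. The reverse inclusion follows from the chain point 4 $\subseteq$ point 3, so all four sets coincide.
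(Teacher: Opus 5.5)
Your proof is correct. Points 1 and 2 are handled as in the paper: your equivalence $K\cap G\subseteq T\iff C\cap D\subseteq\gamma_1^{-1}(T)$ is the content of \Cref{loc:bijection_of_faces_containing_a_subset.statement}. For points 3 and 4 you take a different route.

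The paper obtains both points purely by dehomogenization. It slices $F_K(G)=\min_{K-G}K$ and $F_K(K\cap G)=\min_{K-(K\cap G)}K$ with \Cref{loc:minima_dehomogenize_to_the_induced_order.statement}. It then identifies the induced orders via the two-sided identity $\gamma_0^{-1}(K-G)=\cone(C-D)$ of \Cref{loc:differences_in_the_lifting.statement}, applied to $(C,D)$ and to $(C,C\cap D)$.

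You instead prove directly in $X$ that point 4 is the smallest face containing $C\cap D$, using the kernel representation of an arbitrary such face. This is essentially the finest-preorder argument of \Cref{loc:characterization_of_the_generated_face_of_a_convex_set.monotonicity_remark}. You then close the chain point 4 $\subseteq$ point 3 $\subseteq$ point 2 $=$ point 1 $=$ point 4. Your middle inclusion is the non-trivial half of the minima lemma specialized to $P=K-G$, and it uses only $\cone(C-D)\times\{0\}\subseteq K-G$. The paper needs the reverse inclusion $\gamma_0^{-1}(K-G)\subseteq\cone(C-D)$ to get $\gamma_1^{-1}(F_K(G))\subseteq\min_{\cone(C-D)}C$; your sandwich through point 4 replaces it.

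What each route buys: the paper's is uniform, since each affine characterization is literally the slice of the matching cone characterization, through lemmas that are reused later. Yours proves the minimality of point 4 without homogenization and uses the lifting only in its easy direction.

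The step you flagged as the main obstacle also yields to a direct affine argument. Suppose $x\in C$ and $x_0-x=\lambda(c-d)$ with $\lambda>0$, $x_0,d\in D$ and $c\in C$. Then $e:=\frac{x+\lambda c}{1+\lambda}=\frac{x_0+\lambda d}{1+\lambda}$ lies in $C\cap D$. Either $x=c=e$, or $e\in\,]x,c[$. In both cases any face containing $C\cap D$ contains $x$, by point 1 of \Cref{loc:characterization_of_the_faces_of_convex_sets.statement}. Your lifted computation encodes exactly this convex combination, written at height $1+\lambda$ as $\gamma_1(x)+\lambda\gamma_1(c)=(1+\lambda)\gamma_1(e)$.
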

\begin{remark}
\label{loc:characterization_of_the_generated_face_of_a_convex_set.remark}
For a preorder $P \subseteq X$ with $C \cap D \subseteq \min_P C$, we reformulate $\min_P C$ in different ways. For $x \in C$ and $x_0 \in C \cap D$, we have $x_0  \in \min_P C$, and therefore minimality of $x$ is equivalent to either $x \le_{P} x_0$ or $x \sim_{P} x_0$. Hence
\begin{equation*}
\min{}_P C = C \cap (C \cap D - P) = C \cap (\lin(P) + C \cap D).
\end{equation*}
Letting $P := \cone(C-D)$, this reads 
\begin{equation*}
F_C(D) = C \cap [\cone(D-C) + C \cap D] = C \cap (\lin\cone(C-D) + C \cap D),
\end{equation*}
and with $P = \cone(C - (C \cap D))$,
\begin{equation*}
F_C(D) = C \cap [\cone((C \cap D)-C) + C \cap D] = C \cap (\lin\cone(C- (C \cap D)) + C \cap D).
\end{equation*}
\end{remark}
For a singleton $D = \{x\}$ contained in $C$, from the above remark we have
\begin{equation*}
F_C(\{x\}) = C \cap (\lin \cone(C - x) + x),
\end{equation*}
recovering \cite[Proposition 2.4]{millanIntrinsicCoreMinimal2023}. Our proof method provides an illuminating perspective for this formula, in that the expression
$C \cap (\lin\cone(C-x) + x)$
is a minimum set in the preorder $P := \cone(C - x)$.

We show \Cref{loc:characterization_of_the_generated_face_of_a_convex_set.statement} by dehomogenizing the analogous statements in \Cref{loc:characterization_of_the_generated_face_of_a_convex_cone.statement}. To do this, we require additional lemmas building on \Cref{loc:body.positivity_minimization_and_faces.dehomogenization} describing the dehomogenization of faces subject to a containment condition, dehomogenization of preorders, and dehomogenization of minimum sets.
The first lemma restricts the bijection of faces of \Cref{loc:dehomogenization_is_a_bijection_on_the_faces.statement} with a containment requirement.
\begin{lemma}[Bijection of faces containing a subset]
\label{loc:bijection_of_faces_containing_a_subset.statement}
Let $S \subseteq C \subseteq X$ be convex sets with homogenization cones $K = \cone \circ \gamma_1(C)$ and $T = \cone \circ \gamma_1(S)$. Then the set-valued map $\gamma_1^{-1}(\cdot)$ restricts to a lattice isomorphism between the faces of $C$ that contain $S$ and the faces of $K$ that contain $T$.
\end{lemma}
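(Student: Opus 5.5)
We restrict the lattice isomorphism of \Cref{loc:dehomogenization_is_a_bijection_on_the_faces.statement}, which pairs each face $T' \triangleleft K$ with the face $\gamma_1^{-1}(T') \triangleleft C$, with inverse $\cone\circ\gamma_1$ on faces. Since a restriction of a lattice isomorphism to corresponding subsets is again an order isomorphism (hence a lattice isomorphism for the induced meets and joins), it suffices to show that containment is preserved in both directions: for every face $T' \triangleleft K$,
\begin{equation*}
T \subseteq T' \iff S \subseteq \gamma_1^{-1}(T').
\end{equation*}

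For $(\Rightarrow)$, apply the monotone preimage $\gamma_1^{-1}$ to $T \subseteq T'$. This gives $S = \gamma_1^{-1}(T) \subseteq \gamma_1^{-1}(T')$, where $S = \gamma_1^{-1}(T)$ holds by \Cref{loc:dehomogenization_is_a_bijection_between_sets_and_homogenization_cones.statement}.

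For $(\Leftarrow)$, first note that $T' \subseteq K \subseteq (X\times\mathbb{R}_{++})\cup\{0\}$, so $T'$ is a homogenization cone. By \Cref{loc:dehomogenization_is_a_bijection_between_sets_and_homogenization_cones.statement} we therefore have $T' = \cone\circ\gamma_1(F')$ with $F' := \gamma_1^{-1}(T')$. Now $\cone\circ\gamma_1$ is monotone, so $S \subseteq F'$ gives $T = \cone\circ\gamma_1(S) \subseteq \cone\circ\gamma_1(F') = T'$. The faces $T'$ must contain the origin, but this causes no trouble: $\cone$ includes $0$, and any face of $K$ contains $0$, since faces of cones are kernels intersected with $K$.

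The only point needing care is the empty case, which I expect to be the main subtlety. If $S = \varnothing$, then $T = \cone(\varnothing) = \{0\}$ under the convention $\cone(\varnothing)=\{0\}$, which the paper's bijection lemma implicitly requires. Every face of $K$ contains $\{0\}$ and every face of $C$ contains $\varnothing$, so the correspondence reduces to the full bijection of \Cref{loc:dehomogenization_is_a_bijection_on_the_faces.statement}. This is consistent with the general argument above, so no separate treatment is needed beyond noting the convention. Convexity of $S$ is not used except to ensure that $T$ is a convex cone, as in \Cref{loc:dehomogenization_is_a_lattice_isomorphism_between_convex_sets_and_homogenization_cones.statement}.
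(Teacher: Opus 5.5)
Your proof is correct and follows essentially the same route as the paper. Both arguments establish $T \subseteq T' \iff S \subseteq \gamma_1^{-1}(T')$ for every face $T' \triangleleft K$, using that faces of $K$ are homogenization cones, and then restrict the face isomorphism of \Cref{loc:dehomogenization_is_a_bijection_on_the_faces.statement}. The paper reads this equivalence off the order isomorphism of \Cref{loc:dehomogenization_is_a_lattice_isomorphism_between_convex_sets_and_homogenization_cones.statement}, whereas you unpack it through the monotonicity of $\gamma_1^{-1}$ and $\cone\circ\gamma_1$. Your remark that the empty case needs the convention $\cone(\varnothing)=\{0\}$ is a fair point that the paper leaves implicit.
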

We defer \hyperlink{loc:bijection_of_faces_containing_a_subset.proof}{the proof} to the appendix.
Recall that a preorder $P \subseteq X  \times \mathbb{R}$ induces the preorder $\gamma_0^{-1}(P)$ on $X$. We find an expression for the dehomogenization of the finest preorder $P := K-G$.
\begin{lemma}[Differences in the lifting]
\label{loc:differences_in_the_lifting.statement}
For convex sets $C, D \subseteq X$, let $K = \cone(C \times \{1\})$ and
$G = \cone(D \times \{1\})$. Then
\begin{equation*}
\gamma_0^{-1}(K - G) = \cone(C-D).
\end{equation*}
\end{lemma}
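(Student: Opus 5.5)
We prove the two inclusions by direct computation, writing a general element of $K - G$ explicitly and reading off the condition that its last coordinate vanishes. Every element of $K$ has the form $(\lambda c, \lambda)$ with $\lambda \ge 0$ and $c \in C$, where the zero element corresponds to $\lambda = 0$ with any $c$, or is simply $0$ if $C = \varnothing$. Likewise every element of $G$ has the form $(\mu d, \mu)$ with $\mu \ge 0$ and $d \in D$. Hence a general element of $K - G$ is $(\lambda c - \mu d,\ \lambda - \mu)$, and $\gamma_0^{-1}(K-G)$ consists of those $x \in X$ with $(x, 0)$ of this form, that is, with $\lambda = \mu$.

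For the inclusion $\subseteq$, take $x \in \gamma_0^{-1}(K-G)$. The previous paragraph gives $x = \lambda c - \lambda d = \lambda(c-d)$ with $\lambda \ge 0$, $c \in C$ and $d \in D$, so $x \in \cone(C-D)$. The only subtle point is the case $\lambda = 0$, where $x = 0$.

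\begin{itemize}
\item If $C$ and $D$ are both non-empty, then $0 = 0\cdot(c-d) \in \cone(C-D)$ under the stated definition of $\cone(S) = \{\lambda s \mid \lambda \ge 0, s \in S\}$, which admits $\lambda = 0$.
\item If $C$ or $D$ is empty, then $K$ or $G$ is $\{0\}$. I would handle this degenerate case separately, since then $C - D = \varnothing$ and $\cone(\varnothing) = \varnothing$, whereas $0 \in \gamma_0^{-1}(K-G)$.
\end{itemize}

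This empty-set mismatch is the main obstacle. I expect to state that the lemma implicitly assumes $C, D$ non-empty, or to adopt the convention that $\cone$ always contains the origin, consistent with the paper's convention that cones contain $0$.

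For the inclusion $\supseteq$, take $x = \lambda(c-d)$ with $\lambda \ge 0$. Then
\begin{equation*}
\gamma_0(x) = (\lambda c, \lambda) - (\lambda d, \lambda) \in K - G,
\end{equation*}
since $(\lambda c,\lambda) = \lambda\gamma_1(c) \in K$ and $(\lambda d, \lambda) = \lambda\gamma_1(d) \in G$. This closes the argument. No convexity is used beyond the description of $K$ and $G$ as sets of rays through $C \times \{1\}$ and $D \times \{1\}$, which \Cref{loc:dehomogenization_is_a_bijection_between_sets_and_homogenization_cones.statement} provides.
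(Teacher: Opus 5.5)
Your proof is correct and is essentially the paper's argument: both inclusions come from writing elements of $K$ and $G$ as $(\lambda c,\lambda)$ and $(\mu d,\mu)$ and observing that height zero forces $\lambda=\mu$. For $\supseteq$ you lift $\lambda(c-d)$ as $\lambda\gamma_1(c)-\lambda\gamma_1(d)$, which the paper phrases as $K-G$ being a cone. The empty-set case you flag is not a real exception, because the mismatch only appears when the two conventions are mixed: with the literal definition $\cone(\varnothing)=\varnothing$ both sides are empty, and with the convention that cones contain the origin both sides are $\{0\}$, so the lemma needs no non-emptiness hypothesis.
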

We defer \hyperlink{loc:differences_in_the_lifting.proof}{the proof} to the appendix.
Next, we show sets of minima also dehomogenize, by dehomogenizing jointly the feasible set and the preorder under which the minimum is taken.
\begin{lemma}[Minima dehomogenize to the induced order]
\label{loc:minima_dehomogenize_to_the_induced_order.statement}
Let $K \subseteq X \times \mathbb{R}$ be a convex cone with slice $C := \gamma_1^{-1}(K)$, and let $P \subseteq X \times \mathbb{R}$ be a preorder. Then
\begin{equation*}
\gamma_1^{-1}(\min{}_P K) \subseteq \min{}_{\gamma_0^{-1}(P)}C,
\end{equation*}
with equality when the left hand side is non-empty.
\end{lemma}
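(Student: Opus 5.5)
The plan is to unfold both minimum sets through the identity $\gamma_1(x) - \gamma_1(y) = \gamma_0(x-y)$. This identity was used above to show that $\gamma_0^{-1}(P)$ is the preorder induced by $P$ on $X$, and it turns comparisons between points of the slice $X \times \{1\}$ into membership statements in $\gamma_0^{-1}(P)$. The inclusion will use only that $\gamma_1(C) \subseteq K$. The reverse inclusion will use transitivity of $P$ together with a witness point taken from the left-hand side, which is exactly why non-emptiness is needed.

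\emph{Inclusion.} Let $x \in \gamma_1^{-1}(\min_P K)$. Then $\gamma_1(x) \in K$, so $x \in C$, and $\gamma_1(x) \le_P k$ for every $k \in K$. Fix any $c \in C$. Since $\gamma_1(c) \in K$, we have $\gamma_1(c) - \gamma_1(x) = \gamma_0(c - x) \in P$. This means $c - x \in \gamma_0^{-1}(P)$, that is, $x \le_{\gamma_0^{-1}(P)} c$. As $c \in C$ was arbitrary, $x \in \min_{\gamma_0^{-1}(P)} C$.

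\emph{Equality under non-emptiness.} Fix a witness $x_0 \in \gamma_1^{-1}(\min_P K)$, and let $y \in \min_{\gamma_0^{-1}(P)} C$. Because $x_0 \in C$, minimality of $y$ gives $x_0 - y \in \gamma_0^{-1}(P)$. Equivalently, $\gamma_1(x_0) - \gamma_1(y) \in P$, so $\gamma_1(y) \le_P \gamma_1(x_0)$. Since $\gamma_1(x_0) \le_P k$ for all $k \in K$, transitivity of $P$ (\Cref{loc:equivalence_of_convexity_and_transitivity.statement}) gives $\gamma_1(y) \le_P k$ for all $k \in K$. Also $\gamma_1(y) \in K$ because $y \in C = \gamma_1^{-1}(K)$. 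Hence $\gamma_1(y) \in \min_P K$, so $y$ lies in the left-hand side.

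The only delicate point is this reverse inclusion. Domination over $C$ controls comparisons only with points of $K$ at height $1$, and not with the rest of $K$ (other heights, or the origin). The witness $x_0$ bridges this gap: it is dominated by all of $K$, so any point below $x_0$ inherits domination over all of $K$. Without such a witness, equality can fail, which is why the statement asks for the left-hand side to be non-empty.
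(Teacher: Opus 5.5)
Your proof is correct and matches the paper's argument essentially step for step. Both use the identity $\gamma_1(c)-\gamma_1(x)=\gamma_0(c-x)$ for the inclusion, and a witness $x_0$ from the non-empty left-hand side together with transitivity of $P$ for the reverse inclusion. One small point: transitivity of $P$ holds directly because $P$ is a preorder, so citing the convexity--transitivity lemma is unnecessary but harmless.
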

We defer \hyperlink{loc:minima_dehomogenize_to_the_induced_order.proof}{the proof} to the appendix.
The proof of \Cref{loc:characterization_of_the_generated_face_of_a_convex_set.statement} that
follows comprises only dehomogenization arguments applied to the corresponding
points of \Cref{loc:characterization_of_the_generated_face_of_a_convex_cone.statement}.
\begin{proof}[\hypertarget{loc:characterization_of_the_generated_face_of_a_convex_set.proof}{}Proof of \Cref{loc:characterization_of_the_generated_face_of_a_convex_set.statement}]

The subcone $K \cap G \subseteq K$ has slice $C \cap D$.

$1 = 2$:
By \Cref{loc:bijection_of_faces_containing_a_subset.statement}, the minimum face of $K$ containing $K \cap G$, which is $F_K(G)$ by \Cref{loc:characterization_of_the_generated_face_of_a_convex_cone.statement} point 1, has as its slice the smallest face of $C$ containing $C \cap D$, which is point 1.

For the equality of points 2, 3, and 4: if $C \cap D = \varnothing$, then all three points are $\varnothing$.
Below we only consider the case where $C \cap D \neq \varnothing$.

$2 = 3$:
For $x_0 \in C \cap D$, the lift $\gamma_1(x_0)$ lies in $K \cap G \subseteq F_K(G) = \min_{K-G}K$ by \Cref{loc:characterization_of_the_generated_face_of_a_convex_cone.statement} points 1 and 2. Then the minimum has a non-empty slice, and \Cref{loc:minima_dehomogenize_to_the_induced_order.statement} applies. The induced order is $\gamma_0^{-1}(K-G) = \cone(C-D)$ by \Cref{loc:differences_in_the_lifting.statement}, whence $\gamma_1^{-1}(F_K(G)) = \min_{\cone(C-D)}C$.

$2 = 4$:
Follows from a similar argument as $(2 = 3)$, for the convex sets $C$ and $C \cap D$, whose homogenization cones are $K$ and $K \cap G$ respectively (by \Cref{loc:dehomogenization_is_a_lattice_isomorphism_between_convex_sets_and_homogenization_cones.statement}). This argument yields 
\begin{equation*}
\gamma_1^{-1}(F_K(K \cap G)) = \min_{\cone(C - C \cap D)} C,
\end{equation*}
and the r.h.s. corresponds to $F_K(G)$ by the equivalence of \Cref{loc:characterization_of_the_generated_face_of_a_convex_cone.statement} points 2 and 3.
\end{proof}
\begin{remark}
\label{loc:characterization_of_the_generated_face_of_a_convex_set.monotonicity_remark}
That $\min_{\cone(C - (C \cap D))}C$ is the minimum face containing $C \cap D$ also admits a direct proof. It can be argued that $\cone(C - (C \cap D))$ is the finest preorder for which $C \cap D \leq C$. Indeed, for any preorder $P \subseteq X$ such that $C \cap D \leq C$, it follows that $P \supseteq C - (C \cap D)$, so $\cone(C - (C \cap D))$ is the least such preorder. Enlarging the preorder adds comparisons, hence it enlarges the minimum set: $P \subseteq Q$ gives $\min_P C \subseteq \min_Q C$. The least preorder therefore produces the least face.
\end{remark}
\subsection{The joint supporting subspace}
\label{loc:body.lattice_minima.the_joint_supporting_subspace}
In this section we show that the joint supporting subspace~\cite{scottBilateralFacialReduction2026}
is the dehomogenization of the cross-lineality, and that
the characterizations of the cross-lineality in \Cref{loc:cross_lineality.statement}
dehomogenize to analogues for the joint supporting subspace.
We thereby generalize characterizations from~\cite{scottBilateralFacialReduction2026}
to general vector spaces. In the next theorem we use the notation $\{ a,b \mid \ldots \}$ to mean the set including both elements $a$ and $b$ whenever the condition is satisfied.
\begin{theorem}[Characterizations of the joint supporting subspace]
\label{loc:affine_cross:lineality.statement}
Let $C, D \subseteq X$ be convex sets,
with their homogenization cones $K = \cone(C \times \{ 1 \})$ and $G = \cone(D \times \{ 1 \})$.
The \emph{joint supporting subspace} $T_a(C, D)$ is equivalently:
\begin{enumerate}
\item $\gamma_1^{-1}(\lin(K-G))$
\item $\lin\cone(C-D) + C \cap D$
\item The minimum separating flat of $C$ and $D$.
\item $\lin \cone(C - (C \cap D)) + \lin \cone(D - (C \cap D)) + C \cap D$.
\item $\aff(F_C(D) \cup F_D(C))$
\item $\aff\{ c,c',d,d'\mid c, c'  \in C , d,d'  \in D \text{ s.t. } c-d = \lambda(d'-c') \text{ for some } \lambda > 0\}$
\end{enumerate}
\end{theorem}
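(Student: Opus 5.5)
The plan is to take point 1 as the definition of $T_a(C,D)$. Each other point is then obtained by dehomogenizing the matching characterization of $\lin(K-G)$ in \Cref{loc:cross_lineality.statement}. The empty case is handled first. If $C \cap D = \varnothing$, then $K \cap G = \{0\}$. As noted after \Cref{loc:separating_affine_maps.statement}, $K-G$ is pointed, so $\lin(K-G)=\{0\}$ and point 1 is empty. Points 2 and 4 are empty because they add the empty set $C\cap D$. In point 5, $F_C(D)=F_D(C)=\varnothing$ by \Cref{loc:characterization_of_the_generated_face_of_a_convex_set.statement}, so the affine hull is empty. In point 6, the relation $c-d=\lambda(d'-c')$ rearranges to $\frac{1}{1+\lambda}c+\frac{\lambda}{1+\lambda}c' = \frac{1}{1+\lambda}d+\frac{\lambda}{1+\lambda}d'$, which would be a point of $C\cap D$; hence that set is empty too. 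For point 3, $\gamma_1^{-1}(\cdot)$ preserves inclusion and $\gamma_1^{-1}(\{0\})=\varnothing$ is a separating flat. From here on I fix $x_0 \in C\cap D$, so that $\gamma_1(x_0) \in K\cap G$.

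\textbf{Points 1, 2, 3.} For $1=3$: by \Cref{loc:characterization_of_the_separating_flats.statement} point 1, the separating flats are exactly the preimages $\gamma_1^{-1}(U)$ of separating subspaces $U$. By \Cref{loc:the_cross:lineality_is_the_minimum_separating_subspace.statement}, $\lin(K-G)$ is contained in every such $U$. Since $\gamma_1^{-1}$ is monotone, $\gamma_1^{-1}(\lin(K-G))$ is contained in every separating flat, and it is itself one, so it is the minimum. For $1=2$ I use an elementary fact: if a subspace $U$ contains $\gamma_1(x_0)$, then $\gamma_1^{-1}(U)=x_0+\gamma_0^{-1}(U)$. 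Preimages commute with intersection and negation, so $\gamma_0^{-1}(\lin(K-G)) = \lin\gamma_0^{-1}(K-G) = \lin\cone(C-D)$ by \Cref{loc:differences_in_the_lifting.statement}. It remains to show $x_0+\lin\cone(C-D) = \lin\cone(C-D)+C\cap D$. This holds because for $y\in C\cap D$ both $y-x_0$ and $x_0-y$ lie in $C-D$.

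\textbf{Point 4.} I dehomogenize $\lin(K-G)=H_K(G)+H_G(K)$ from \Cref{loc:cross_lineality.statement}, using $H_K(G)=\lin(K-(K\cap G))$ from \Cref{loc:the_generated_supporting_subspace_is_a_cross:lineality.statement}. Applying \Cref{loc:differences_in_the_lifting.statement} to the pair $(C, C\cap D)$, whose homogenization cones are $K$ and $K\cap G$, gives $\gamma_0^{-1}(H_K(G))=\lin\cone(C-(C\cap D))$; the same argument gives the analogue for $G$. The key step, and the one I expect to be the main obstacle, is that $\gamma_0^{-1}(U+V)=\gamma_0^{-1}(U)+\gamma_0^{-1}(V)$. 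In general preimages do not commute with sums. Here it works because $\gamma_1(x_0)\in U\cap V$. Given $u+v$ at height $0$, with $u$ at height $t$ and $v$ at height $-t$, I replace $u,v$ by $u-t\gamma_1(x_0)$ and $v+t\gamma_1(x_0)$, both of height $0$. The formula $\gamma_1^{-1}(U)=x_0+\gamma_0^{-1}(U)$ then gives point 4, after absorbing $x_0$ into $C\cap D$ as in the previous step.

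\textbf{Points 5 and 6.} For point 5, \Cref{loc:cross_lineality.statement} point 4 gives $\lin(K-G)=F_K(G)-F_G(K)$, and this equals $\Span(F_K(G)\cup F_G(K))$. Indeed, it is a subspace containing both faces, since $0$ lies in each, and any subspace containing both faces contains their difference. The cone $F_K(G)+F_G(K)$ is the homogenization cone of $\mathrm{conv}(F_C(D)\cup F_D(C))$, using $\gamma_1^{-1}(F_K(G))=F_C(D)$ from \Cref{loc:characterization_of_the_generated_face_of_a_convex_set.statement} point 2. Its span equals the span above, so \Cref{loc:span_dehomogenizes_to_the_affine_hull.statement} yields $\aff(F_C(D)\cup F_D(C))$. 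For point 6, I show that the set $S$ inside the hull contains $F_C(D)\cup F_D(C)$ and is contained in it.
\begin{itemize}
\item[$\subseteq$:] The relation $c-d=\lambda(d'-c')$ says $\gamma_1(c)+\lambda\gamma_1(c') = \gamma_1(d)+\lambda\gamma_1(d') \in K\cap G \subseteq F_K(G)$. The splitting property of the face $F_K(G)$ puts $\gamma_1(c)$ and $\gamma_1(c')$ in $F_K(G)$, so $c,c'\in F_C(D)$; symmetrically $d,d'\in F_D(C)$.
\item[$\supseteq$:] By \Cref{loc:characterization_of_the_generated_face_of_a_convex_cone.remark}, $c\in F_C(D)$ gives $\gamma_1(c)\in G-K$, i.e.\ $\gamma_1(c)+\nu\gamma_1(c')=\mu\gamma_1(d)$. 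Comparing heights gives $\mu=1+\nu$. If $\nu=0$, then $c\in C\cap D$, and I take $c=c'=d=d'$. Otherwise $c-d=\nu(d-c')$, and I take $d'=d$. The case of $F_D(C)$ is symmetric.
\end{itemize}
Hence the affine hulls agree, which gives point 6.
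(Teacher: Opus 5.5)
Your proof is correct. It follows the paper's overall strategy: take point 1 as the definition and slice the conic characterizations of $\lin(K-G)$ from \Cref{loc:cross_lineality.statement}. Three steps, however, take a different route.

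\textbf{Point 2.} The paper does not compute the slice. It invokes point 4 of \Cref{loc:characterization_of_the_separating_flats.statement}: the map $\tilde L \mapsto \tilde L + C\cap D$ is an inclusion-preserving surjection from the supporting subspaces of $\cone(C-D)$ onto the separating flats, so it carries the minimum $\lin\cone(C-D)$ to the minimum separating flat. You instead use the explicit slice formula $\gamma_1^{-1}(U) = x_0 + \gamma_0^{-1}(U)$ together with \Cref{loc:differences_in_the_lifting.statement}. The hypothesis $\gamma_1(x_0)\in\lin(K-G)$, which you leave implicit, is immediate from $K\cap G\subseteq (K-G)\cap(G-K)$.

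\textbf{Point 4.} The paper passes through point 5 rather than through $H_K(G)+H_G(K)$. It applies the already proved equality of points 5 and 2 to the pair $(C, C\cap D)$ to get $\aff(F_C(D)) = \lin\cone(C-(C\cap D)) + C\cap D$. It does the same for $D$ and joins the two flats through their common points. Your observation replaces this: $\gamma_0^{-1}$ commutes with sums of subspaces that both contain $\gamma_1(x_0)$, which holds here because $K\cap G$ lies in both $H_K(G)$ and $H_G(K)$.

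\textbf{Point 6.} The paper cites \Cref{loc:generated_faces_are_formed_by_symmetric_differences.statement}. That result rests in turn on \Cref{loc:pairs_forming_symmetric_differences.statement} and on $F_C(D) = T_a(C,D)\cap C$. Your two inclusions establish $S = F_C(D)\cup F_D(C)$ directly, using the splitting property of $F_K(G)$ and the identity $F_K(G) = K\cap(G-K)$.

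Your version is more uniform and self-contained. Every point is an explicit slice through one chosen origin $x_0$, and you verify the empty case point by point where the paper only asserts it. The paper's version is shorter given its earlier machinery. It also exhibits point 4 explicitly as the decomposition of point 5 into the two generated supporting flats.
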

Characterizations 1, 3 and 6 are novel even in finite dimensions, and characterizations 2, 4 and 5 extend
known characterizations in finite dimensions~\cite{scottBilateralFacialReduction2026} to general vector spaces.
Characterization 1 relates to the cross-lineality and characterization 2 to the finest separating order $\cone(C-D)$.
In characterization 2 we also recover the definition of the JSS presented in~\cite{scottBilateralFacialReduction2026},
showing that our characterizations are consistent with this earlier work.
Characterizations 4 and 5 are decompositions, point 5 with the generated faces and point 4 with the generated supporting flats, as we show below.

With characterization 5, we find that the two generated faces $F_C(D),  F_D(C)$ and the joint supporting subspace $T_a(C, D)$ determine one another, with the converse given by \Cref{loc:characterization_of_the_generated_face_of_a_convex_set.remark}:
\begin{equation*}
F_C(D) =  T_a(C, D) \cap C,  \quad T_a(C, D) =  \aff(F_C(D) \cup  F_D(C)).
\end{equation*}

This relationship clarifies the link between the joint supporting subspace in \cite{scottBilateralFacialReduction2026} and the pair of ``minimal" faces in \cite{linFacialReductionNice2025}, also showing that the minimal faces are in fact the mutually generated faces.

Characterization 6 of \Cref{loc:affine_cross:lineality.statement} is particularly direct: it says that the joint supporting subspace
is the affine hull of those points in $C$ and $D$ that form symmetric differences. This is the affine counterpart of the structure of the cross-lineality $\lin(K-G)= (K-G) \cap (G-K)$, which is the set of points $k,k' \in K$, $g,g' \in G$ that form symmetric differences $k-g=g'-k'$.
The relationship of point 6 to point 2 is also interesting: a symmetric difference $c-d = d'-c'$ immediately yields the point $p = 1/2(c + c') = 1/2(d + d') \in C \cap D$; the intersection arises immediately and necessarily, and is sufficient to specify the affine offset of the joint supporting subspace.
The following proposition serves to prove characterization 6, but also strengthens it.
\begin{proposition}[Pairs forming symmetric differences]
\label{loc:pairs_forming_symmetric_differences.statement}
Let $C, D \subseteq X$ be convex sets and let $K = \cone(C \times \{1\})$, $G = \cone(D \times \{1\})$ be their homogenization cones. For $c \in C$ and $d \in D$, the following are equivalent:
\begin{enumerate}
\item The difference $c-d$ has an opposite difference up to scale: $c - d = -\lambda(c'-d')$ for some $\lambda > 0$, $c' \in C$, $d' \in D$.
\item $c - d \in \lin\cone(C-D)$.
\item $c, d \in \gamma_1^{-1}(\lin(K-G))$.
\end{enumerate}
\end{proposition}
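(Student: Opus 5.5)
The plan is to prove $1 \iff 2$ by unfolding the definition of the lineality, and $2 \iff 3$ by lifting to the homogenization space. There, the absorption property of separating subspaces in \Cref{loc:characterization_of_separating_subspaces.statement} point 4 does the work. Throughout, I use that $C-D$ is convex, so $\cone(C-D)$ is a convex cone and $\lin\cone(C-D) = \cone(C-D) \cap -\cone(C-D)$ by \Cref{loc:characterization_of_the_lineality.statement} point 1.

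For $1 \iff 2$, note that $c - d \in C-D \subseteq \cone(C-D)$ always holds. Hence $c-d \in \lin\cone(C-D)$ if and only if $c-d \in -\cone(C-D)$, that is, $c - d = -\mu(c'-d')$ for some $\mu \ge 0$, $c' \in C$, $d' \in D$. The only gap with point 1 is the case $\mu = 0$. In that case $c = d \in C \cap D$, and we may take $c' = d' := c$ and $\lambda := 1$, so that $-\lambda(c'-d') = 0 = c-d$. Conversely, point 1 with $\lambda > 0$ is a special case of the membership above.

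For $2 \iff 3$, I would first observe that $\gamma_1(c) - \gamma_1(d) = \gamma_0(c-d)$. By \Cref{loc:differences_in_the_lifting.statement}, $\gamma_0^{-1}(K-G) = \cone(C-D)$. Applying the same identity to the negated pair, or simply using linearity of $\gamma_0$, gives $\gamma_0^{-1}(G-K) = -\cone(C-D)$, and hence
\begin{equation*}
\gamma_0^{-1}(\lin(K-G)) = \lin\cone(C-D).
\end{equation*}
Therefore point 2 is equivalent to $\gamma_1(c) - \gamma_1(d) \in \lin(K-G)$.

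It remains to check that this membership is equivalent to point 3. For $(3 \implies 2)$, if $\gamma_1(c), \gamma_1(d) \in \lin(K-G)$, then their difference lies in this subspace. For $(2 \implies 3)$, which I expect to be the one substantive step, I would use \Cref{loc:the_cross:lineality_is_the_minimum_separating_subspace.statement}: $U := \lin(K-G)$ is a separating subspace of $K$ and $G$. By \Cref{loc:characterization_of_separating_subspaces.statement} point 4, for $k := \gamma_1(c) \in K$ and $g := \gamma_1(d) \in G$, the condition $k - g \in U$ forces $k, g \in U$, which is exactly $c, d \in \gamma_1^{-1}(\lin(K-G))$. The rest of the argument is bookkeeping; the care required is in the $\mu = 0$ case above and in justifying the identity for $\gamma_0^{-1}(\lin(K-G))$, since preimages commute with intersections and negation.
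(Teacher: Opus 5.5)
Your proposal is correct and follows essentially the same route as the paper. For $1 \iff 2$ you use that $c-d \in \cone(C-D)$ holds automatically and treat the degenerate case $c=d$ separately. For $2 \iff 3$ you use $\gamma_0^{-1}(\lin(K-G)) = \lin\cone(C-D)$ from \Cref{loc:differences_in_the_lifting.statement}, together with the absorption property of the separating subspace $\lin(K-G)$ (\Cref{loc:characterization_of_separating_subspaces.statement} point 4). The only cosmetic difference is that you handle the $\mu = 0$ case after unfolding membership in $-\cone(C-D)$, whereas the paper dispatches $c = d$ up front.
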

With characterization 6 of \Cref{loc:affine_cross:lineality.statement}, we find that the points forming symmetric differences naturally assemble in an affine flat: their affine hull contains exactly those points that form symmetric differences, and no other.
The joint supporting subspace is therefore ``the affine structure of the points forming symmetric differences between the two convex sets".
This emergent geometric regularity is explained by the underlying cross-lineality $\lin(K-G)$, that is similarly the set of points forming symmetric differences $k-g=g'-k'$. That 
$\lin(K-G)$ is a subspace emerges from the convexity of $K$ and $G$.
\begin{proof}[\hypertarget{loc:pairs_forming_symmetric_differences.proof}{}Proof of \Cref{loc:pairs_forming_symmetric_differences.statement}]

$1 \iff 2$: If $c = d$, both hold: point 2 because $\lin\cone(C-D)$ contains $0$, and point 1 with $c' = d' = c \in C \cap D$. Otherwise no witnessing $\lambda$ can vanish, so point 1 is equivalently $c-d \in \cone(D-C)$. Since $c-d$ always lies in $C - D \subseteq \cone(C-D)$, point 1 is equivalent to
\begin{equation*}
c-d \in \cone(C-D) \cap \cone(D-C) = \lin\cone(C-D).
\end{equation*}
$2 \iff 3$: By \Cref{loc:differences_in_the_lifting.statement} applied to both $(C,D)$ and $(D,C)$,
\begin{equation*}
\gamma_0^{-1}(\lin(K-G)) = \gamma_0^{-1}(K-G) \cap \gamma_0^{-1}(G-K) = \cone(C-D) \cap \cone(D-C) = \lin\cone(C-D),
\end{equation*}
so point 2 is equivalently $\gamma_1(c) - \gamma_1(d) \in \lin(K-G)$. Since $\lin(K-G)$ is a separating subspace of $K$ and $G$ by \Cref{loc:cross_lineality.statement} point 2, absorption (\Cref{loc:characterization_of_separating_subspaces.statement} point 4) places $\gamma_1(c), \gamma_1(d) \in \lin(K-G)$ individually, which is point 3. Conversely, if $\gamma_1(c), \gamma_1(d) \in \lin(K-G)$, then
$\gamma_1(c) - \gamma_1(d)  \in \lin(K-G)$, hence $c-d  \in \gamma_0^{-1}(\lin(K-G)) = \lin \cone(C-D)$,
which is point 2.
\end{proof}
The conditions on $c$ and on $d$ in point 3 of \Cref{loc:pairs_forming_symmetric_differences.statement} are independent. We can therefore consider only one convex set, and doing so characterizes the generated faces, because $F_C(D)= T_a \cap C$.
\begin{proposition}[Generated faces are formed by symmetric differences]
\label{loc:generated_faces_are_formed_by_symmetric_differences.statement}
For convex sets $C, D \subseteq X$, the points in $F_C(D)$ are precisely those that are part of a symmetric difference:
\begin{equation*}
F_C(D) = \{c \in C : c-d = -\lambda(c'-d') \text{ for some } \lambda > 0,\ c' \in C,\ d, d' \in D\}.
\end{equation*}
\end{proposition}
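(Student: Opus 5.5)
The plan is to combine \Cref{loc:pairs_forming_symmetric_differences.statement} with the identity $F_C(D) = C \cap \gamma_1^{-1}(\lin(K-G))$, where $K,G$ are the homogenization cones of $C,D$. That identity follows from \Cref{loc:characterization_of_the_generated_face_of_a_convex_set.statement} point 2 together with \Cref{loc:characterization_of_the_generated_face_of_a_convex_cone.remark}:
\begin{equation*}
F_C(D) = \gamma_1^{-1}(F_K(G)) = \gamma_1^{-1}(K \cap \lin(K-G)) = C \cap \gamma_1^{-1}(\lin(K-G)).
\end{equation*}
Write $R$ for the right-hand side of the statement, i.e.\ the set of $c \in C$ for which there exists $d \in D$ satisfying point 1 of \Cref{loc:pairs_forming_symmetric_differences.statement}. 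It then suffices to show $R = C \cap \gamma_1^{-1}(\lin(K-G))$.

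For $R \subseteq F_C(D)$, take $c \in R$ with its witnesses $d, c', d', \lambda$. \Cref{loc:pairs_forming_symmetric_differences.statement} ($1 \Rightarrow 3$) gives $\gamma_1(c) \in \lin(K-G)$. Since also $c \in C$, we get $c \in F_C(D)$.

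For $F_C(D) \subseteq R$, take $c \in F_C(D)$. I must produce some $d \in D$ such that the pair $(c,d)$ satisfies point 3 of \Cref{loc:pairs_forming_symmetric_differences.statement}. Then $3 \Rightarrow 1$ yields the required symmetric difference, and so $c \in R$. The natural choice is any $d \in C \cap D$. Such a $d$ lies in $F_D(C) = D \cap \gamma_1^{-1}(\lin(K-G))$ by the symmetric version of the identity above, so $\gamma_1(d) \in \lin(K-G)$. Together with $\gamma_1(c) \in \lin(K-G)$, point 3 holds. The only obstacle here is the existence of such a $d$. If $C \cap D = \varnothing$, then $F_C(D) = \varnothing$ by point 1 of \Cref{loc:characterization_of_the_generated_face_of_a_convex_set.statement}, so the inclusion is vacuous.

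Alternatively, one can avoid the cone machinery in this direction. Take $d = x_0 \in C \cap D$. Membership $c \in F_C(D)$ means, by \Cref{loc:characterization_of_the_generated_face_of_a_convex_set.remark}, that $c - x_0 \in \lin\cone(C-D)$, which is point 2 of \Cref{loc:pairs_forming_symmetric_differences.statement}, and $2 \Rightarrow 1$ then applies directly. Either route works. The step to treat with most care is matching the quantifier structure of the statement, in particular making the existence of a suitable $d$ explicit and handling the empty case separately.
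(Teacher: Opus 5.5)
Your proof is correct, and its skeleton is the paper's: both write $F_C(D) = C \cap \gamma_1^{-1}(\lin(K-G))$ and apply \Cref{loc:pairs_forming_symmetric_differences.statement}. In both, the only real issue is finding a partner $d \in D \cap \gamma_1^{-1}(\lin(K-G))$, and any point of $C \cap D$ serves.

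The routes differ only in the case $C \cap D = \varnothing$. The paper proves the stronger implication $C \cap D = \varnothing \implies \lin(K-G) = \{0\}$, that is, $T_a(C,D) = \varnothing$. It does so using the decomposition $\lin(K-G) = \lin(K-(K\cap G)) + \lin(G-(K\cap G))$ from \Cref{loc:cross_lineality.statement} together with pointedness of homogenization cones. You instead note that $F_C(D) = \varnothing$ directly from point 1 (or point 3) of \Cref{loc:characterization_of_the_generated_face_of_a_convex_set.statement}, since $\varnothing$ is a face of $C$. This makes the inclusion $F_C(D) \subseteq R$ vacuous.

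Your handling is shorter and needs no cone machinery for this step; your alternative via the remark on $\min_P C$ avoids homogenization altogether. The paper's version additionally shows that the joint supporting subspace is empty for disjoint sets. That is exactly what makes point 1 of \Cref{loc:affine_cross:lineality.statement} empty in the disjoint case.
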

\begin{proof}[\hypertarget{loc:generated_faces_are_formed_by_symmetric_differences.proof}{}Proof of \Cref{loc:generated_faces_are_formed_by_symmetric_differences.statement}]

Write $T_a := T_a(C,D) = \gamma_1^{-1}(\lin(K-G))$, so that $F_C(D) = T_a \cap C$ by \Cref{loc:characterization_of_the_generated_face_of_a_convex_set.remark}. By \Cref{loc:pairs_forming_symmetric_differences.statement}, the r.h.s. is also characterized as $T_a \cap C$, up
to guaranteeing the existence of a $d  \in D \cap T_a$ to pair with every $c  \in C \cap T_a$ in \Cref{loc:pairs_forming_symmetric_differences.statement}.
This follows if we show that $T_a \neq \varnothing \implies C \cap D \neq \varnothing$, since we can then pick any $d \in C \cap D \subseteq D \cap T_a$.

We show the contrapositive, that $C \cap D = \varnothing \implies \lin(K-G)= \{0\}$. This follows from equivalence with point 3 of \Cref{loc:cross_lineality.statement}, which with \Cref{loc:the_generated_supporting_subspace_is_a_cross:lineality.statement} says that 
\begin{align*}
\lin(K-G)
&= \lin(K - (K \cap G)) + \lin(G- (K \cap G))\\
&= \lin(K- \{0\}) + \lin(G-\{0\})\\
&= \{0\},
\end{align*}
where the third equality follows from $K$ and $G$ being homogenization cones, and therefore pointed.
\end{proof}
\begin{proof}[\hypertarget{loc:affine_cross:lineality.proof}{}Proof of \Cref{loc:affine_cross:lineality.statement}]

If $C \cap D =  \emptyset$, then all points are the same set because they are
all the empty set. Next we show equality between all points in the case that $C \cap D \neq \varnothing$.

$1 = 3$:
By \Cref{loc:characterization_of_the_separating_flats.statement} point 1,
the separating flats of $(C,D)$ are exactly the preimages $\gamma_1^{-1}(U)$ of the separating subspaces $U$ of $(K,G)$. By \Cref{loc:cross_lineality.statement} point 2, $\lin(K-G)$ is the minimum separating subspace. Since $\gamma_1^{-1}$ preserves inclusion, it sends this minimum to the minimum separating flat, which is point 1. We note this argument holds even when $C \cap D = \varnothing$.

$3 = 2$:
By \Cref{loc:characterization_of_the_separating_flats.statement} point 4, the map $\tilde{L} \mapsto \tilde{L} + C \cap D$ sends the supporting subspaces of $\cone(C-D)$ onto the separating flats of $(C,D)$, and it preserves inclusion. By \Cref{loc:characterization_of_the_lineality.statement} point 4, $\lin\cone(C-D)$ is the minimum supporting subspace, so its image $\lin\cone(C-D) + C \cap D$ is the minimum separating flat.

$1 = 5$:
We compute from point 1:
\begin{align*}
\gamma_1^{-1}(\lin(K-G))
&= \gamma_1^{-1}\Span\bigl(F_K(G) \cup F_G(K)\bigr)\\
&= \gamma_1^{-1}\Span\bigl(\convcone(F_K(G) \cup F_G(K))\bigr)\\
&= \aff \gamma_1^{-1}\bigl(\convcone(F_K(G) \cup F_G(K))\bigr)\\
&= \aff\bigl(\gamma_1^{-1}(F_K(G)) \cup \gamma_1^{-1}(F_G(K))\bigr)\\
&= \aff\bigl(F_C(D) \cup F_D(C)\bigr),
\end{align*}
which is point 5. The first equality is \Cref{loc:cross_lineality.statement} point 3 with the identification of $H_K(G) = \Span(F_K(G))$ by \Cref{loc:generated_face_spans_the_lineality.statement}, with the fact that 
$\Span(F_K(G)) + \Span(F_G(K)) =  \Span(F_K(G) \cup F_G(K))$. The third is \Cref{loc:span_dehomogenizes_to_the_affine_hull.statement}, applied to the homogenization cone $\convcone(F_K(G) \cup F_G(K)) \subseteq (X \times \mathbb{R}_{++}) \cup \{0\}$.
The fifth is \Cref{loc:characterization_of_the_generated_face_of_a_convex_set.statement} point 2.

$5 = 4$:
First consider that
\begin{align*}
\aff(F_C(D)) &= \aff\bigl(F_C(D) \cup (C \cap D)\bigr) \\
&= \aff\bigl(F_C(C \cap D) \cup F_{C \cap D}(C)\bigr) \\
&= \lin\cone(C - (C \cap D)) + C \cap D =: L_C + C \cap D,
\end{align*}
where we defined $L_C := \lin \cone(C-(C \cap D))$ in the last line. The first equality holds because $C \cap D \subseteq F_C(D)$ (follows from \Cref{loc:characterization_of_the_generated_face_of_a_convex_set.statement} point 4). The second equality uses the fact that $F_C(C \cap D) = F_C(D)$ by equivalence of points 3 and 4 of \Cref{loc:characterization_of_the_generated_face_of_a_convex_set.statement}, and also that $F_{C \cap D}(C) = C \cap D$ because $C \cap D \subseteq C$. The third equality uses the equivalence between
points 5 and 2 shown above, applied to the convex sets $C$ and $C \cap D$.

Similarly, we have $\aff(F_D(C)) = L_D + C \cap D$. The subspaces $L_C$ and $L_D$ both contain $C \cap D - C \cap D$, therefore the term $(+ C \cap D)$ is a common affine offset applied to both. Therefore their affine hull is the flat of direction $L_C + L_D$ through $C \cap D$, i.e.,
\begin{equation*}
\aff(F_C(D) \cup F_D(C)) = L_C + L_D + C \cap D,
\end{equation*}
which is point 4.

$6 = 5$:
By \Cref{loc:generated_faces_are_formed_by_symmetric_differences.statement},
the set of points forming symmetric differences is
precisely $F_C(D) \cup F_D(C)$.
\end{proof}
\subsubsection{The generated supporting flat}
\label{loc:body.lattice_minima.the_joint_supporting_subspace.the_generated_supporting_flat}
When considering nested convex sets $S \subseteq C \subseteq X$, the joint supporting subspace yields the generated supporting flat.
\begin{proposition}[The joint supporting subspace is the generated supporting flat]
\label{loc:the_joint_supporting_subspace_is_the_generated_supporting_flat.statement}
Let $C \subseteq X$ be a convex set and let $S \subseteq C$ be a convex set. Then $T_a(C,S)$ is the minimum supporting flat of $C$ containing $S$.
\end{proposition}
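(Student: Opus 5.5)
The plan is to reduce the statement to its conic analogue, \Cref{loc:separation_reduces_to_containment.statement}, by dehomogenization, following the same pattern as the proof of \Cref{loc:affine_cross:lineality.statement} point $1 = 3$. Let $K := \cone\circ\gamma_1(C)$ and $T := \cone\circ\gamma_1(S)$. Since $S \subseteq C$, we have $T \subseteq K$, and $T$ is a subcone of $K$. By \Cref{loc:affine_cross:lineality.statement} point 1, $T_a(C,S) = \gamma_1^{-1}(\lin(K-T))$. By \Cref{loc:separation_reduces_to_containment.statement} point 2, $\lin(K-T)$ is the minimum supporting subspace of $K$ containing $T$. It therefore remains to transport this minimality through $\gamma_1^{-1}$.

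For the transport, I would establish the following: a flat $L$ is a supporting flat of $C$ containing $S$ if and only if $L = \gamma_1^{-1}(U)$ for some supporting subspace $U$ of $K$ containing $T$.

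\emph{Backward direction.} By \Cref{loc:characterization_of_supporting_flats.statement} point 4, $\gamma_1^{-1}(U)$ is supporting. If $U \supseteq T$, then $\gamma_1^{-1}(U) \supseteq \gamma_1^{-1}(T) = S$ by \Cref{loc:dehomogenization_is_a_bijection_between_sets_and_homogenization_cones.statement}.

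\emph{Forward direction.} Start from $L = \ker f$ with $f$ positive on $C$ and $S \subseteq \ker f$. The homogenization $\psi$ is positive on $K$ (\Cref{loc:preservation_of_orientation_in_dehomogenization.statement}), and $\ker\psi$ is a subspace containing $\gamma_1(S)$, hence containing $T = \cone\gamma_1(S)$. Also $\gamma_1^{-1}(\ker\psi) = \ker f = L$ by \Cref{loc:dehomogenization_of_kernels.statement}.

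Given this correspondence, minimality follows. Let $L$ be any supporting flat of $C$ containing $S$, and write $L = \gamma_1^{-1}(U)$ with $U \supseteq T$ supporting. Then $\lin(K-T) \subseteq U$, and since $\gamma_1^{-1}$ is monotone, $T_a(C,S) \subseteq L$. Moreover, $T_a(C,S)$ is itself in the class, being the image of $\lin(K-T)$. Alternatively, one could argue directly via separation: by \Cref{loc:affine_cross:lineality.statement} point 3, $T_a(C,S)$ is the minimum separating flat. One would then show an affine version of \Cref{loc:separation_reduces_to_containment.statement} point 1: if $f(S)\le 0\le f(C)$ with $S\subseteq C$, then $f(S)=0$, so separating maps are exactly maps positive on $C$ vanishing on $S$. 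This direct version seems cleaner, and I would present it as the main argument.

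The main subtlety is the degenerate case $S = \varnothing$. Here $T = \{0\}$, and $\lin(K)=\{0\}$ because $K$ is a homogenization cone and hence pointed, so $T_a(C,\varnothing) = \varnothing$. One must check that $\varnothing$ counts as a supporting flat of $C$. It does, by the empty-kernel construction in the separation section: equip $\gamma_1$ with the orientation $K$, as in \Cref{loc:characterization_of_supporting_flats.statement} point 1, where $C\setminus\varnothing$ is convex. So the minimum is $\varnothing$, consistent with the statement. A second point needing care, in the direct argument, is that $f(S)=0$ uses the antisymmetry of $Y_+$ together with $S\subseteq C$.
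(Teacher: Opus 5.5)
Your transport argument is essentially the paper's proof. The paper likewise takes the minimality of $\lin(K-G)$ from \Cref{loc:separation_reduces_to_containment.statement} point 2. It then pushes that minimality through $\gamma_1^{-1}$ using two facts: $G\subseteq U\iff S\subseteq\gamma_1^{-1}(U)$ for subspaces $U$, and $\gamma_1^{-1}$ maps the supporting subspaces of $K$ onto the supporting flats of $C$. Together these are exactly your two-sided correspondence.

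The direct argument you intend to make primary is a genuinely different route, and it is correct. For $S\subseteq C$, antisymmetry of $Y_+$ forces $f(S)=0$ whenever $f(S)\le 0\le f(C)$. Conversely, any $f$ positive on $C$ and vanishing on $S$ separates. So the separating flats of $(C,S)$ are literally the same class as the supporting flats of $C$ containing $S$, and \Cref{loc:affine_cross:lineality.statement} point 3 concludes. The reversed clause $f(C)\le 0\le f(S)$ in the definition of separation is harmless, since $-f$ has the same kernel.

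What your route buys is brevity. The dehomogenization bookkeeping was already done when proving point 3, via \Cref{loc:characterization_of_the_separating_flats.statement} point 1. All that remains is the affine analogue of \Cref{loc:separation_reduces_to_containment.statement} point 1, which is the same one-line antisymmetry observation. The paper's route instead presents the proposition as the literal dehomogenization of its conic counterpart, in keeping with its methodology. The cost is re-checking that containment and surjectivity transfer through $\gamma_1^{-1}$.

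Your separate check that $\varnothing$ is a supporting flat when $S=\varnothing$ is correct but unnecessary in either route. $T_a(C,\varnothing)=\gamma_1^{-1}(\lin(K))$ is the dehomogenization of a supporting subspace, so it is automatically a supporting flat. Also, point 3 of \Cref{loc:affine_cross:lineality.statement} was established without assuming $C\cap D\neq\varnothing$.
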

The \hyperlink{loc:the_joint_supporting_subspace_is_the_generated_supporting_flat.proof}{proof} in the appendix dehomogenizes \Cref{loc:separation_reduces_to_containment.statement}.
\begin{proposition}[Generated face spans the generated supporting flat]
\label{loc:generated_face_spans_the_generated_supporting_flat.statement}
For convex sets $C, D \subseteq X$,
\begin{equation*}
\aff(F_C(D)) = T_a(C, C \cap D).
\end{equation*}
\end{proposition}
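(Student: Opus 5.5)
The plan is to reduce everything to characterization 5 of \Cref{loc:affine_cross:lineality.statement}, applied to the pair $(C, C\cap D)$. That characterization gives
\begin{equation*}
T_a(C, C\cap D) = \aff\bigl(F_C(C\cap D) \cup F_{C\cap D}(C)\bigr).
\end{equation*}
Two identifications then finish the argument. First, $F_{C\cap D}(C) = C\cap D$: the smallest face of $C\cap D$ containing $(C\cap D)\cap C = C\cap D$ is $C\cap D$ itself, by \Cref{loc:characterization_of_the_generated_face_of_a_convex_set.statement} point 1. Second, $F_C(C\cap D) = F_C(D)$. By point 1 of the same theorem, both are the smallest face of $C$ containing $C\cap(C\cap D) = C\cap D$. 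Alternatively, one can compare points 3 and 4 for the pair $(C, C\cap D)$, where point 3 reads $\min_{\cone(C-(C\cap D))}C$, which is point 4 for the pair $(C,D)$.

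Substituting these gives $T_a(C,C\cap D) = \aff(F_C(D)\cup (C\cap D))$. Since $C\cap D \subseteq F_C(D)$ by point 1 of \Cref{loc:characterization_of_the_generated_face_of_a_convex_set.statement}, the union is just $F_C(D)$, and the claim follows. This is the same chain already used in the $5=4$ step of the proof of \Cref{loc:affine_cross:lineality.statement}, so it can be cited almost verbatim.

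The only delicate point is the empty case $C\cap D=\varnothing$. Then $F_C(D)=\varnothing$, so $\aff(F_C(D))=\varnothing$. The pair $(C,\varnothing)$ also has empty intersection, so by the empty case in the proof of \Cref{loc:affine_cross:lineality.statement}, $T_a(C,\varnothing)=\varnothing$. Point 1 confirms this: the homogenization cone of $\varnothing$ is $\{0\}$, and $\lin(K-\{0\})=\lin K=\{0\}$ because $K$ is pointed, so $\gamma_1^{-1}(\{0\})=\varnothing$.

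I expect no real obstacle. The main care needed is checking that characterization 5 was established for arbitrary convex pairs, including nested ones and the degenerate empty case, which it was.
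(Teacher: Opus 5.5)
Your proposal is correct and follows the paper's own proof. Both apply characterization 5 of \Cref{loc:affine_cross:lineality.statement} to the pair $(C, C\cap D)$, then use \Cref{loc:characterization_of_the_generated_face_of_a_convex_set.statement} to replace $F_C(C\cap D)$ by $F_C(D)$ and $F_{C\cap D}(C)$ by $C\cap D$, and absorb $C\cap D \subseteq F_C(D)$ into the union. Your separate treatment of $C\cap D=\varnothing$ is harmless but not needed, since the same chain holds verbatim there with every term empty.
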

\begin{proof}[\hypertarget{loc:generated_face_spans_the_generated_supporting_flat.proof}{}Proof of \Cref{loc:generated_face_spans_the_generated_supporting_flat.statement}]

\begin{align*}
T_a(C, C \cap D) &= \aff\bigl(F_C(C \cap D) \cup F_{C \cap D}(C)\bigr)\\
&= \aff\bigl(F_C(D) \cup (C \cap D)\bigr)\\
&= \aff(F_C(D)).
\end{align*}
The first equality is \Cref{loc:affine_cross:lineality.statement} point 5 at the pair $(C, C \cap D)$. The second is points 3 and 4 of \Cref{loc:characterization_of_the_generated_face_of_a_convex_set.statement} for $F_C(C \cap D) = F_C(D)$, and its point 1 for $F_{C \cap D}(C) = C \cap D$, a set being its own generated face. The third is $C \cap D \subseteq F_C(D)$, again by point 1.
\end{proof}
\section{Applications}
\label{loc:body.applications}
\subsection{Faces of differences}
\label{loc:body.applications.faces_of_differences}
In \Cref{loc:decomposition_of_faces_of_difference_of_convex_cones.statement} we showed that the faces of a difference of two convex cones decompose into a difference of faces of the original convex cones in a manner structured by separating subspaces. Here we derive the equivalent for convex sets, for which there is not as much structure.
We must resort to minimization, instead of separating subspaces; separating
flats are not sufficiently powerful to obtain the full result.
\begin{theorem}[Decomposition of faces of the difference of convex sets]
\label{loc:decomposition_of_faces_of_the_difference_of_convex_sets.statement}
Let $C, D \subseteq X$ be convex sets, let $F \triangleleft (C-D)$ be a non-empty face, and let
$P \subseteq X$ be a preorder for which $F = \min_{P} (C-D)$.
Then $F_C := \min_{P}(C)$ and $F_D := \max_{P}(D)$ are faces of $C$ and $D$ respectively, and $F = F_C-F_D$.
Further, such a preorder $P$ always exists: for instance, the choice $P := \cone((C-D)-F)$ has the required property. 
\end{theorem}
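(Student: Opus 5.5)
The plan is to prove the elementary identity $\min_P(C-D) = \min_P(C) - \max_P(D)$ for a fixed preorder $P$, using only translation invariance of $P$. Faceness of $F_C$ and $F_D$ then follows from \Cref{loc:characterization_of_the_faces_of_convex_sets.statement} point 3. The existence of a suitable $P$ is handled separately, by a direct check that $\cone((C-D)-F)$ realizes $F$ as its minimum set.

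\emph{Existence of $P$.} Set $P := \cone((C-D)-F)$, with $F$ non-empty.
\begin{itemize}
\item[] \textbf{$P$ is a compatible preorder.} $P$ contains $0$, since $F \subseteq C-D$. Closure under addition follows the pattern of \Cref{loc:differences_in_the_lifting.statement}. Given $\lambda(a-f)+\mu(a'-f')$, rescale to $(\lambda+\mu)\bigl(ta+(1-t)a' - (tf+(1-t)f')\bigr)$. This uses convexity of $C-D$ and of $F$, the latter holding because faces are convex.
\item[] \textbf{$F \subseteq \min_P(C-D)$.} For $f \in F$ and $a \in C-D$, we have $a-f \in P$, so $f \le_P a$.
\item[] \textbf{$\min_P(C-D) \subseteq F$.} Let $x \in C-D$ with $x \le_P F$, and fix $f_0 \in F$. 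Then $f_0 - x \in P$, so $f_0 - x = \lambda(a-f)$ with $\lambda \ge 0$, $a \in C-D$ and $f \in F$. If $\lambda = 0$, then $x = f_0 \in F$. Otherwise
\begin{equation*}
\tfrac{1}{1+\lambda}f_0 + \tfrac{\lambda}{1+\lambda}f = \tfrac{1}{1+\lambda}x + \tfrac{\lambda}{1+\lambda}a.
\end{equation*}
The left side lies in $F$ by convexity, and the right side is an open-segment point between $x$ and $a$ in $C-D$ (if $x=a$, then $x\in F$ directly). By the segment property of faces (point 1 of \Cref{loc:characterization_of_the_faces_of_convex_sets.statement}), $x \in F$.
\end{itemize}

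\emph{Decomposition.} First, $\min_P C - \max_P D \subseteq \min_P(C-D)$. Indeed, for $c_0 \le_P C$ and $d_0 \ge_P D$, any $c-d$ satisfies $c_0 - d_0 \le_P c - d_0 \le_P c - d$ by translation invariance and transitivity.

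For the converse, take $x = c_0 - d_0 \in F$. For each $c \in C$, minimality gives $c_0 - d_0 \le_P c - d_0$, and translating by $d_0$ yields $c_0 \le_P c$. Hence $c_0 \in \min_P C$. Symmetrically, comparing with $c_0 - d$ shows $d_0 \in \max_P D$. This gives $F = F_C - F_D$.

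\emph{Faceness.} Since $F$ is non-empty, so are $F_C$ and $F_D$. Point 3 of \Cref{loc:characterization_of_the_faces_of_convex_sets.statement} then shows that $F_C = \min_P C$ is a face of $C$. Writing $\max_P D = \min_{-P} D$, and noting that $-P$ is again a compatible preorder, the same point shows that $F_D$ is a face of $D$.

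\emph{Main obstacle.} The only delicate step is the inclusion $\min_P(C-D) \subseteq F$ for the explicit choice of $P$. It requires converting the cone relation into a segment relation and invoking point 1 of \Cref{loc:characterization_of_the_faces_of_convex_sets.statement}, including the care taken with the degenerate cases $\lambda = 0$ and $x = a$.
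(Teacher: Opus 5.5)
Your proof is correct. Its core matches the paper's: the identity $\min_P(C-D)=\min_P C-\max_P D$ in a fixed preorder, with faceness read off from point 3 of \Cref{loc:characterization_of_the_faces_of_convex_sets.statement}. Two sub-steps take a different route.

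For the existence of $P$, the paper does no verification by hand. It notes that $F$ is the face of $C-D$ generated by $F$ itself, and reads $F=\min_{\cone((C-D)-F)}(C-D)$ off points 1 and 3 of \Cref{loc:characterization_of_the_generated_face_of_a_convex_set.statement}, a result proved by dehomogenizing to cones. That route is shorter and ties the choice of $P$ to the generated-face machinery: it is the finest preorder with $F\le_P C-D$. Your direct check is self-contained and needs only the classical segment definition of a face. It proves closure of $P$ under addition by rescaling, then applies the segment property to $\tfrac{1}{1+\lambda}f_0+\tfrac{\lambda}{1+\lambda}f=\tfrac{1}{1+\lambda}x+\tfrac{\lambda}{1+\lambda}a$.

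For the inclusion $\min_P(C-D)\subseteq\min_P C-\max_P D$, the paper argues by contrapositive: if $c\notin\min_P C$, then $c'-d<_P c-d$ for any $c'\in\min_P C$. That step tacitly needs $\min_P C\neq\varnothing$, which is not argued there. Your direct argument shows that every representation $c_0-d_0$ of a point of $F$ has $c_0\in\min_P C$ and $d_0\in\max_P D$. This proves the inclusion and supplies that non-emptiness at the same time.
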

This result can be alternatively stated for a Minkowski sum,
in which case we instead have $F = \min_{P}(C) + \min_{P}(D)$.
\Cref{loc:decomposition_of_faces_of_the_difference_of_convex_sets.statement} generalizes \cite[Theorem 1.7.5]{schneider_ConvexBodiesBrunn_2013}
across three axes: to infinite dimensions, to non-compact convex sets,
and to faces that are not necessarily exposed.
\begin{proof}[\hypertarget{loc:decomposition_of_faces_of_the_difference_of_convex_sets.proof}{}Proof of \Cref{loc:decomposition_of_faces_of_the_difference_of_convex_sets.statement}]

That the preorder $P := \cone((C-D)-F)$ is such that $F = \min_{P} (C-D)$
follows from \Cref{loc:characterization_of_the_generated_face_of_a_convex_set.statement} points 1 and 3, as $F$ is the face of $C-D$ generated by itself.

In
the following, every inequality, $\min$ and $\max$ are taken in the preorder $P$. By 
\Cref{loc:characterization_of_the_faces_of_convex_sets.statement} point 3,
both $\min C$ and $\max D$ are faces, so it only remains to show that
\begin{equation*}
\min C - \max D = \min (C-D).
\end{equation*}

$\subseteq$: Let $c \in C, d \in D, c' \in \min C$, and
$d' \in \max D$. We have that $c'-d' \leq c'-d \leq c-d$,
which shows that $\min C - \max D \leq C - D$, and therefore that
$\min C - \max D \subseteq \min (C-D)$.

$\supseteq$: For any $c \in C, d \in D$ such that $c-d \notin \min C-\max D$,
consider the case where $c \notin \min C$. For any $c' \in  \min C$,
we have $c'- d < c-d$, so $c-d \notin \min(C-D)$. We have the same result when considering instead 
$d \notin \max D$. This shows that
$(\min C - \max D)^c \subseteq (\min (C-D))^c$, so we have shown the containment and hence the result.
\end{proof}
\subsection{Faces of intersections}
\label{loc:body.applications.faces_of_intersections}
The next result resolves the open problem 5.10 stated in
\cite{weisNoteFacesConvex2025}.
\begin{theorem}[Face of intersections is the intersection of faces]
\label{loc:face_of_intersections_is_the_intersection_of_faces.statement}
For two convex sets $C, D \subseteq X$, for any face $A \triangleleft (C \cap D)$
we have 
\begin{equation*}
A = F_C(A) \cap F_D(A).
\end{equation*}
\end{theorem}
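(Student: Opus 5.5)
The inclusion $A \subseteq F_C(A) \cap F_D(A)$ is immediate: $A \subseteq C \cap D$, so $C \cap A = A = D \cap A$, and by \Cref{loc:characterization_of_the_generated_face_of_a_convex_set.statement} point 1 each generated face contains $A$. If $A = \varnothing$, then \Cref{loc:characterization_of_the_generated_face_of_a_convex_set.statement} point 3 gives $F_C(A) = F_D(A) = \varnothing$, and there is nothing to prove. So the plan is to assume $A \neq \varnothing$ and show the reverse inclusion. The idea is to turn membership in each generated face into a concrete segment relation. Then I will combine the two relations and use the classical face property of $A$ inside $C \cap D$ (\Cref{loc:characterization_of_the_faces_of_convex_sets.statement} point 1).

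\emph{Step 1 (unpack the generated faces).} Fix $x \in F_C(A) \cap F_D(A)$ and some $x_0 \in A$. By \Cref{loc:characterization_of_the_generated_face_of_a_convex_set.statement} point 4 applied to the pair $(C, A)$, we have $F_C(A) = \min_{\cone(C-A)} C$. Hence $x \le x_0$ in $\cone(C-A)$, that is, $x_0 - x = \lambda(c - a)$ for some $\lambda \ge 0$, $c \in C$ and $a \in A$. If $\lambda = 0$, then $x = x_0 \in A$ and we are done. Otherwise, rearranging gives
\begin{equation*}
y := \tfrac{1}{1+\lambda}x + \tfrac{\lambda}{1+\lambda}c = \tfrac{1}{1+\lambda}x_0 + \tfrac{\lambda}{1+\lambda}a \in A ,
\end{equation*}
using convexity of $A$. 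Thus $y = x + \alpha(c - x)$ with $\alpha \in\, ]0,1[$. In the same way, $F_D(A) = \min_{\cone(D-A)} D$ produces $d \in D$ and $\beta \in\, ]0,1[$ with $z := x + \beta(d - x) \in A$. Note also that $x \in C \cap D$, since $F_C(A) \subseteq C$ and $F_D(A) \subseteq D$.

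\emph{Step 2 (find a point of $C \cap D$ beyond $A$).} Since $z \in A \subseteq C$ and $x, c \in C$, convexity gives $\operatorname{conv}\{x, c, z\} \subseteq C$. Likewise $\operatorname{conv}\{x, d, y\} \subseteq D$. Let $m := \tfrac12(y+z) \in A$, and for small $\varepsilon > 0$ set
\begin{equation*}
w := m + \varepsilon(m - x) = x + \tfrac{1+\varepsilon}{2}\bigl(\alpha(c-x) + \beta(d-x)\bigr).
\end{equation*}
I will check that $w \in \operatorname{conv}\{x, c, z\}$. Writing $\beta(d-x) = z - x$, we get $w = x + \tfrac{(1+\varepsilon)\alpha}{2}(c-x) + \tfrac{1+\varepsilon}{2}(z-x)$. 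This is a convex combination of $x, c, z$ as long as the two coefficients sum to at most $1$, i.e.\ $(1+\varepsilon)(\alpha+1)/2 \le 1$. That holds for small $\varepsilon$ because $\alpha < 1$. Symmetrically, $w \in \operatorname{conv}\{x, d, y\}$ once $(1+\varepsilon)(\beta+1)/2 \le 1$. Hence $w \in C \cap D$. This computation is purely affine, so no independence of $c - x$ and $d - x$ is needed.

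\emph{Step 3 (conclude by the face property).} Now $x, w \in C \cap D$, and $m = \tfrac{1}{1+\varepsilon}w + \tfrac{\varepsilon}{1+\varepsilon}x \in\, ]x, w[$ with $m \in A$. Since $A$ is a face of $C \cap D$, point 1 of \Cref{loc:characterization_of_the_faces_of_convex_sets.statement} gives $x \in A$, which proves $F_C(A) \cap F_D(A) \subseteq A$.

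The main obstacle is Step 2. The two generated faces only give segments leaving $x$ toward $c \in C$ and toward $d \in D$, and neither of these directions need stay in $C \cap D$. The point is to use the midpoint of the two witnesses $y, z \in A$: the triangle that lies in $C$ and the one that lies in $D$ both contain a small outward push of that midpoint. Everything else is bookkeeping with the minimum-set description of \Cref{loc:characterization_of_the_generated_face_of_a_convex_set.statement}.
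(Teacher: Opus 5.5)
Your proof is correct, and it takes a different route from the paper's. The paper does not argue pointwise. It writes $F_C(A)\cap F_D(A)=\min_{\cone(C-A)}C\cap\min_{\cone(D-A)}D$ and restricts both minima to $C\cap D$. It then merges them into one minimum over the intersected preorder $\cone(C-A)\cap\cone(D-A)$ via \Cref{loc:minimizers_for_intersection_of_preorders.statement}. Next it identifies that preorder with $\cone((C\cap D)-A)$ via \Cref{loc:difference_cone_with_intersection_of_containing_convex_sets.statement}, whose proof is a parallelogram midpoint trick on two representations $w=c-f_1=d-f_2$ of a common direction. It finishes with $F_{C\cap D}(A)=A$.

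You use the minimum-set description only to extract the two witnesses $y\in A\cap\,]x,c[$ and $z\in A\cap\,]x,d[$. Your triangle construction, which pushes the midpoint $m$ of $y,z$ slightly away from $x$, does the work of both lemmas at once. The classical segment property of $A$ inside $C\cap D$ then closes the argument.

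Your version is more elementary and self-contained. It needs only the smallest-face and minimum-set descriptions of generated faces and point 1 of \Cref{loc:characterization_of_the_faces_of_convex_sets.statement}, with an explicit $\varepsilon$.

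The paper's version is more structural. It isolates the preorder identity $\cone(C-A)\cap\cone(D-A)=\cone((C\cap D)-A)$, which is the conceptual content behind ``intersecting generated faces amounts to intersecting their finest preorders''. Its chain also proves $F_C(A)\cap F_D(A)=F_{C\cap D}(A)$ for every nonempty convex $A\subseteq C\cap D$, with the face hypothesis entering only at the last equality. Your Step 3 yields the inclusion $\subseteq$ of that identity in the same generality, if you apply the segment property to the face $F_{C\cap D}(A)\ni m$ instead of to $A$.
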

In \cite{weisNoteFacesConvex2025}, the same decomposition is shown under the
condition that the face $A$ itself have a non-empty intrinsic core (also known
as the relative algebraic interior), which
holds in particular whenever $A$ is finite-dimensional. The general case was
left open.

Our proof method is to express the generated faces as 
$F_C(A) = \min_{\cone(C-A)}C$, by
\Cref{loc:characterization_of_the_generated_face_of_a_convex_set.statement} point 3, 
a characterization which associates the generated face with the preorder $\cone(C-A)$. Then we show that the intersection of generated faces is obtained by the intersection of their associated preorders. 
\begin{lemma}[Minimizers for intersection of preorders]
\label{loc:minimizers_for_intersection_of_preorders.statement}
For preorders $K, G \subseteq X$ and a convex set $S \subseteq X$,
\begin{equation*}
\min_{K}S \cap \min_{G}S = \min_{K \cap G} S.
\end{equation*}
\end{lemma}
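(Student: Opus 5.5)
The proof is a direct unfolding of the definition of minimum sets. No dehomogenization or separation machinery is needed, because the statement is purely order-theoretic and holds pointwise.

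First, I would check that the right-hand side makes sense, namely that $K \cap G$ is again a compatible preorder. Both $K$ and $G$ contain the origin, so $0 \in K \cap G$ and the relation is reflexive. Both are closed under addition and under multiplication by scalars in $\mathbb{R}_{++}$, so $K \cap G$ is too. Hence $K \cap G$ is a convex cone containing the origin, and by \Cref{loc:equivalence_of_convexity_and_transitivity.statement} it is a compatible preorder.

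Next, I would record the elementary identity that for all $x, y \in X$,
\begin{equation*}
x \le_{K \cap G} y \iff y - x \in K \cap G \iff \bigl(y - x \in K \text{ and } y - x \in G\bigr) \iff \bigl(x \le_{K} y \text{ and } x \le_{G} y\bigr).
\end{equation*}
In words, the order induced by an intersection of positive cones is the conjunction of the two orders.

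Finally, I would substitute this into $\min_{K \cap G} S = \{s \in S \mid \forall s' \in S,\ s \le_{K \cap G} s'\}$. The condition "for all $s' \in S$, both $s \le_K s'$ and $s \le_G s'$" splits into the conjunction of "for all $s'$, $s \le_K s'$" and "for all $s'$, $s \le_G s'$", since a universal quantifier distributes over a conjunction. This gives exactly $\min_K S \cap \min_G S$, with both inclusions obtained at once.

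There is no genuine obstacle. The only points needing a remark are the well-definedness of $K \cap G$ as a preorder and the fact that convexity of $S$ is never used; the identity holds for an arbitrary subset $S$. The substantive work lies in the subsequent application to \Cref{loc:face_of_intersections_is_the_intersection_of_faces.statement}.
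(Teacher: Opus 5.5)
Your proposal is correct and follows essentially the same route as the paper: you unfold $\min$ as a comparison against every point of $S$ and use $x \le_{K \cap G} y \iff x \le_K y \land x \le_G y$. Your extra checks, that $K \cap G$ is again a compatible preorder and that convexity of $S$ is never used, are valid additions that the paper leaves implicit.
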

\begin{proof}[\hypertarget{loc:minimizers_for_intersection_of_preorders.proof}{}Proof of \Cref{loc:minimizers_for_intersection_of_preorders.statement}]

For $x \in S$, minimality is a comparison with every point of $S$,
\begin{equation*}
x \in \min_{K \cap G}S \iff x \le_{K \cap G} S,
\end{equation*}
and
\begin{equation*}
x \le_{K \cap G} y \iff x \le_{K} y \land x \le_{G} y,
\end{equation*}
therefore
\begin{equation*}
x \in \min{}_{K \cap G}S \iff x \le_{K} S \land x \le_{G} S \iff x \in \min{}_{K}S \cap \min{}_{G}S.
\end{equation*}
\end{proof}
\begin{lemma}[Difference cone with intersection of containing convex sets]
\label{loc:difference_cone_with_intersection_of_containing_convex_sets.statement}
Let $C, D, F \subseteq X$ be convex sets such that $F \subseteq C \cap D$.
Then
\begin{equation*}
\cone((C \cap D) - F) = \cone(C-F) \cap \cone(D-F)
\end{equation*}
\end{lemma}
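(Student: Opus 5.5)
The inclusion $\subseteq$ is immediate from monotonicity, and for $\supseteq$ I will build an explicit common point of $C \cap D$ by averaging the two base points in $F$. I first dispose of the degenerate case $F = \varnothing$. Then both sides are $\cone(\varnothing) = \varnothing$ under the convention $\cone(S) = \{\lambda s \mid \lambda \ge 0, s \in S\}$. So assume $F \neq \varnothing$.

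For $\subseteq$: since $C \cap D \subseteq C$ and $C \cap D \subseteq D$, we have $(C\cap D) - F \subseteq C - F$ and $(C \cap D) - F \subseteq D - F$. Taking $\cone$ preserves inclusion, so the left-hand side lies in both cones and hence in their intersection.

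For $\supseteq$, take $v$ in the intersection and write $v = \lambda(c - f_1) = \mu(d - f_2)$ with $\lambda,\mu \ge 0$, $c \in C$, $d \in D$ and $f_1, f_2 \in F$. If $v = 0$, then $v = f - f$ for any $f \in F$, and $f \in C\cap D$, so $v$ lies in the left-hand side. Otherwise $\lambda, \mu > 0$.

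The key step is to choose the base point $f := \tfrac12 f_1 + \tfrac12 f_2$, which lies in $F$ by convexity. I then set $x := f + \varepsilon v$ for a small $\varepsilon > 0$ and show $x \in C \cap D$. For membership in $C$, rewrite
\begin{equation*}
x = \tfrac12\bigl(f_1 + 2\varepsilon\lambda (c - f_1)\bigr) + \tfrac12 f_2 .
\end{equation*}
The first bracket is a point of the segment $[f_1, c] \subseteq C$ as soon as $2\varepsilon\lambda \le 1$. Also $f_2 \in F \subseteq C$, so convexity of $C$ gives $x \in C$. Symmetrically,
\begin{equation*}
x = \tfrac12 f_1 + \tfrac12\bigl(f_2 + 2\varepsilon\mu(d - f_2)\bigr),
\end{equation*}
which lies in $D$ once $2\varepsilon\mu \le 1$, using $f_1 \in F \subseteq D$. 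Choosing $\varepsilon := \min\{1/(2\lambda), 1/(2\mu)\}$ gives $x \in C \cap D$. Then $v = \varepsilon^{-1}(x - f) \in \cone((C\cap D) - F)$.

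The only real obstacle is that $c$ and $d$ need not coincide, and the two base points $f_1, f_2$ differ. So one cannot simply rescale one representation to land in $C\cap D$. The trick that resolves this is that each $f_i$ lies in both $C$ and $D$, because $F \subseteq C \cap D$. Averaging the base points therefore lets each set absorb the other's base point, and the hypothesis $F \subseteq C \cap D$ is used exactly there.
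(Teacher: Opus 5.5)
Your proof is correct and follows essentially the same route as the paper's: both average the base points $f_1, f_2$ and use $F \subseteq C \cap D$ so that each set absorbs the other's base point. The only differences are that the paper first shrinks to a common $w \in (C-F)\cap(D-F)$ and then halves it, whereas you combine both scalings into a single $\varepsilon$, and that you treat the cases $F=\varnothing$ and $v=0$ explicitly where the paper leaves them implicit.
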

\begin{proof}[\hypertarget{loc:difference_cone_with_intersection_of_containing_convex_sets.proof}{}Proof of \Cref{loc:difference_cone_with_intersection_of_containing_convex_sets.statement}]

$(\subseteq)$: Trivial.

$(\supseteq)$: Let $x \in \cone(C-F) \cap \cone(D-F)$, and pick $\lambda_1, \lambda_2 > 0$
with $\lambda_1 x \in C-F$ and $\lambda_2 x \in D-F$. Both $C-F$ and $D-F$ are convex
and contain $0$, hence closed under shrinking, so $w := \min(\lambda_1, \lambda_2)x$
lies in both: there exist elements $c \in C$, $d \in D$ and $f_1, f_2 \in F$ such that
\begin{equation*}
w = c - f_1 = d - f_2.
\end{equation*}
We claim that $\frac{w}{2} \in (C \cap D) - F$, which suffices to show the containment since its multiple $x$ is then in $\cone((C \cap D) - F)$.
The claim is verified by taking the midpoint $f := \frac{f_1 + f_2}{2} \in F$, with which $\frac{w}{2}= (w /2 + f) - f  \in C \cap D - F$, because
\begin{equation*}
f + \frac{w}{2} = \frac{c + f_2}{2} \in C, \qquad f + \frac{w}{2} = \frac{f_1 + d}{2} \in D.
\end{equation*}
\end{proof}
\begin{proof}[\hypertarget{loc:face_of_intersections_is_the_intersection_of_faces.proof}{}Proof of \Cref{loc:face_of_intersections_is_the_intersection_of_faces.statement}]

For $A = \varnothing$ the result holds because the three faces are empty. When $A \neq \varnothing$, the result holds because 
\begin{align*}
F_C(A) \cap F_D(A)  = &\min{}_{\cone(C-A)}C \cap \min{}_{\cone(D-A)}D\\
= &\min_{\cone(C-A)} (C \cap D) \cap  \min_{\cone(D-A)}(C \cap D)\\
 = &\min_{\cone(C-A) \cap \cone(D - A)}(C \cap D)\\
=& \min_{\cone((C \cap D) - A)}(C \cap D) =  F_{C \cap D}(A) = A,
\end{align*}
where the first equality corresponds to the characterization of generated faces of \Cref{loc:characterization_of_the_generated_face_of_a_convex_set.statement} point 3, the second equality holds because every point in the intersection of the minimum sets is in $C \cap D$, the third holds by \Cref{loc:minimizers_for_intersection_of_preorders.statement}, the fourth by \Cref{loc:difference_cone_with_intersection_of_containing_convex_sets.statement}, and the last because $A \triangleleft C \cap D$.
\end{proof}
\subsection{Facial reduction and lexicographic maps}
\label{loc:body.applications.facial_reduction_and_lexicographic_maps}
The lexicographic characterization of faces of convex sets provided
in \cite[Theorem 2]{martinez-legaz_LEXICOGRAPHICALCHARACTERIZATIONFACES_}
says that for any convex set $C \subseteq \mathbb{R}^n$, any face
$F \triangleleft C$ can be expressed as 
\begin{equation*}
F = \argmax_{x  \in  C} \, Ax,
\end{equation*}
for some matrix $A \in \mathbb{R}^{m \times n}$ with $m \le n$, 
where the argmax is over the lexicographic order in $\mathbb{R}^m$. 
This characterization is extended to infinite dimensions with step-affine
functions in \cite{gorokhovikCharacterizationsFacesConvex2026}, though
we presently only discuss the finite-dimensional case.

The lexicographic characterization was originally obtained from the concept of lexicographic separation of convex sets. In
an immediately preceding work, \textcite{martinez-legazLexicographicalSeparationRn1987} show that any
two disjoint convex sets can be strictly separated in the lexicographic order
(the set images are strictly ordered in the lexicographic order; $A(C) < A(D)$).
This ``true" separation for any pair of disjoint convex sets is unique to lexicographic orders.
Using this framework,
it is shown in \cite{martinez-legaz_LEXICOGRAPHICALCHARACTERIZATIONFACES_}
that any face can be obtained by lexicographically separating 
a face $F \triangleleft C$ from its complement
$C \setminus F$, which is also convex. This positions lexicographic maps as a seemingly unique
way of characterizing the faces of convex sets.

In the language of the present work,
the characterization uses a linear oriented map $A:X \to (Y, Y_+)$ where
$Y_+$ is the positive cone of the lexicographic order in $Y$. We call such
maps and their affine counterparts lexicographic maps.
We thus situate lexicographic maps as members of the larger class of oriented maps.
Our results make it manifest that lexicographic orders have no privileged access to faces, as
we showed that any preorder obtains a face via minimization.
On the other hand, the fact that lexicographic preorders are in themselves
sufficient to recover any face of any convex set is a nontrivial fact that
does not follow from our theory of preorders thus far. Nonetheless, our machinery provides convenient
language to show this interesting fact. In this section we provide
a simple, alternative proof for this fact, by connecting
lexicographic maps with the notion of facial reduction in conic programming,
recently extended to bilateral facial reduction between any two convex 
sets~\cite{scottBilateralFacialReduction2026, linFacialReductionNice2025}.
We define the lexicographic product for oriented maps, from which it is possible to build lexicographic maps from oriented functionals. 
This product is analogous to the lexicographic product of cones in the theory of ordered groups~\cite{fuchsPartiallyOrderedAlgebraic2014}, and generalizes the procedure
of adding rows to the matrix $A$ in~\cite{martinez-legaz_LEXICOGRAPHICALCHARACTERIZATIONFACES_}.
\begin{definition}[Lexicographic product of oriented maps]
\label{loc:lexicographic_product_of_oriented_maps.statement}
Let $\phi_1:X \to (Y_1, Y_{1+})$ and $\phi_2:X \to (Y_2, Y_{2+})$ be either both linear or both affine oriented maps. The lexicographic product forms the oriented map $\phi_1 \rtimes \phi_2:X \to (Y_1 \times Y_2, Y_+)$, where the product takes values $\forall x \in  X,  \phi_1 \rtimes \phi_2(x) = \phi_1 \times \phi_2 (x) = (\phi_1(x),  \phi_2(x))$, and the codomain of the product takes the partial order $Y_+ := ((Y_{1+} \setminus \{0\}) \times Y_2) \cup (\{0\} \times Y_{2+})$. 
\end{definition}
The kernel of the product is the intersection of the kernels: $\ker(\phi_1 \rtimes \phi_2) = \ker\phi_1 \cap \ker\phi_2$.
The lexicographic product is associative: that
$\phi_1 \rtimes \phi_2 \rtimes \phi_3$ is positive on a point $x$ corresponds to the statement that 
\begin{equation*}
\phi_1(x) \in Y_{1+} \setminus \{0\} \,  \lor \, \bigl( \phi_1(x) = 0 \ \land\ \phi_2(x) \in Y_{2+} \setminus \{0\} \bigr) \, \lor \, \bigl( \phi_1(x) = 0 \ \land\ \phi_2(x) = 0 \ \land\ \phi_3(x) \in Y_{3+} \bigr),
\end{equation*}
a condition which can be seen to correspond to both $(\phi_1 \rtimes \phi_2) \rtimes \phi_3$ and $\phi_1 \rtimes (\phi_2 \rtimes \phi_3)$.
The lexicographic maps of~\cite{martinez-legaz_LEXICOGRAPHICALCHARACTERIZATIONFACES_}
are then products $\phi := \psi_1 \rtimes \ldots \rtimes \psi_m$ of oriented functionals on $W$, where each $\psi_i$ is non-trivial on the kernel $\ker(\psi_1 \rtimes \ldots \rtimes \psi_{i-1}) = \bigcap_{j<i}\ker\psi_j$ of the preceding partial product.
We now formalize bilateral facial reduction~\cite{scottBilateralFacialReduction2026}
as the procedure of iteratively building lexicographic products while 
maintaining separation of two given convex sets $C, D$ at every step.
This perspective unifies the lexicographic characterization of faces~\cite{martinez-legaz_LEXICOGRAPHICALCHARACTERIZATIONFACES_}
and facial reduction~\cite{drusvyatskiyManyFacesDegeneracy2017} under our framework.
We first provide a short proof for the central result of \cite{scottBilateralFacialReduction2026},
that the joint supporting subspace is the constraint obtained by the iterative reduction,
and then show that the lexicographic characterization of faces follows as a corollary. We begin with a lemma in
the conic setting.
\begin{lemma}[Alternatives for a separating linear oriented map]
\label{loc:alternatives_for_a_separating_linear_oriented_map.statement}
Let $K,G \subseteq W$ be convex cones for $W$ a finite-dimensional
vector space. For any oriented linear map $\phi$ such that $\phi(K) \geq \phi(G)$, at least one of the following holds:
\begin{enumerate}
\item $K \cap \ker \phi = \{0\}$ or $G \cap \ker \phi = \{0\}$,
\item $\ker \phi = \lin(K-G)$,
\item $\exists \psi:W \to (\mathbb{R}, \mathbb{R}_+)$ an oriented linear functional, non-zero on $\ker\phi$, such that $(\phi \rtimes \psi)(K) \geq (\phi \rtimes \psi)(G)$.
\end{enumerate}
\end{lemma}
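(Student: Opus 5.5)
The plan is to assume that alternative 2 fails and build the functional $\psi$ of alternative 3 directly. Alternative 1 will not be needed, so the argument is really a dichotomy between 2 and 3.

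Let $U := \ker\phi$. By \Cref{loc:characterization_of_separating_subspaces.statement} point 2, $U$ is a separating subspace of $K$ and $G$. By \Cref{loc:cross_lineality.statement} point 2, $\lin(K-G) \subseteq U$. Consider the convex cone
\begin{equation*}
M := (K-G)\cap U = (K\cap U) - (G\cap U),
\end{equation*}
where the second equality is the absorption property of \Cref{loc:characterization_of_separating_subspaces.statement} point 4, as used in \Cref{loc:decomposition_of_faces_of_difference_of_convex_cones.statement}. Its lineality is $\lin(M) = \lin(K-G)\cap U = \lin(K-G)$.

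First I reduce alternative 3 to a positivity condition. I claim that a linear functional $\psi$ satisfies $(\phi\rtimes\psi)(K-G)\ge 0$ in the lexicographic order as soon as $\psi(M)\ge 0$. Take $k\in K$ and $g\in G$. If $\phi(k-g)\neq 0$, then $\phi(k-g)\in Y_+\setminus\{0\}$ because $\phi$ separates, so the pair lies in the first part of the lexicographic cone. If instead $\phi(k-g)=0$, then $k-g\in U$, and absorption gives $k,g\in U$. Hence $k-g\in M$ and $\psi(k-g)\ge 0$, so the pair lies in $\{0\}\times\mathbb{R}_+$. Either way $(\phi\rtimes\psi)(k)\ge(\phi\rtimes\psi)(g)$. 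It therefore suffices to find a linear functional on $U$ that is nonzero on $U$ and nonnegative on $M$, and then to extend it linearly to $W$ along any complement of $U$.

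Next, since alternative 2 fails, $U\neq \lin(K-G)=\lin(M)$. This forces $M\neq U$: if $M=U$, then $\lin(M)=U$, a contradiction.

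The main step is the finite-dimensional one. I need that a convex cone $M$ properly contained in the finite-dimensional space $U$ has a nonzero element in its dual cone. I would argue by contradiction. If the dual cone $M^*$ in $U^*$ were $\{0\}$, then by the bipolar theorem $\overline{M}=M^{**}=U$. A convex set whose closure is all of $U$ has interior equal to the interior of its closure, namely $U$ (finite dimensions), so $M=U$. This contradicts the previous step. Thus some $\psi\neq 0$ on $U$ has $\psi(M)\ge 0$, and the first step then gives alternative 3.

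This separation step is the only place finite dimensionality enters. Care is needed only to cite the fact that a convex set and its closure share the same interior, which holds in finite dimensions. If alternative 2 holds instead, there is nothing to prove.
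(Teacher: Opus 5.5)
Your proposal is correct and follows essentially the paper's route: restrict to $\ker\phi$, observe that the cone $(K\cap\ker\phi)-(G\cap\ker\phi)$ has lineality contained in $\lin(K-G)\subsetneq\ker\phi$, use finite-dimensional separation to obtain a functional that is nonzero on $\ker\phi$ and nonnegative on that cone (you via the bipolar theorem, the paper by separating a point $-z$ lying outside the cone), and extend it to $W$. Your explicit absorption argument for the lexicographic inequality fills in a step the paper only asserts, and your remark that alternative 1 is never needed is accurate: the paper assumes its failure here but uses it only in the affine counterpart, where it makes the dehomogenized functional non-constant on $\ker f$.
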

In the above lemma, finite dimensionality is only needed to guarantee separation by functionals. It can be relaxed in favor of interiority conditions that allow for the use of the Hahn-Banach theorem.
\begin{proof}[\hypertarget{loc:alternatives_for_a_separating_linear_oriented_map.proof}{}Proof of \Cref{loc:alternatives_for_a_separating_linear_oriented_map.statement}]

Assume that cases 1 and 2 fail. Narrowing the ambient space to $Z := \ker\phi$, we must show the existence of a non-trivial separating oriented functional $\psi:\ker\phi \to (\mathbb{R}, \mathbb{R}_+)$, i.e., $\psi(G') \leq \psi(K')$ for $K' := K \cap Z$ and $G' := G \cap Z$. The existence of $\psi$ is guaranteed by the Hahn-Banach theorem in finite dimensions, so long as $\lin(K'-G') \subsetneq \ker\phi$.

The condition holds: $\lin(K'-G') \subseteq \lin(K-G) \subsetneq \ker\phi$, by monotonicity of the lineality and the failure of case 2. It guarantees separation, as follows. Take $z \in \ker\phi \setminus \lin(K'-G')$. Then one of $z, -z$ lies outside the convex cone $P := K' - G'$, say $-z \notin P$. In finite dimensions, the disjoint convex sets $P$ and $\{-z\}$ are separated by a non-zero functional $\psi$: $\psi(P) \geq \psi(-z)$. Since $P$ is a cone, this lower bound forces $\psi(P) \geq 0$. Then $\psi(P) = \psi(K' - G') \geq 0  \implies \psi(K') \geq \psi(G')$.
Extend $\psi$ linearly to $W$, which leaves it non-zero on $\ker\phi$. The separation $\psi(G') \leq \psi(K')$ is sufficient to show that $(\phi \rtimes \psi)(K) \geq (\phi \rtimes \psi)(G)$.
\end{proof}
Dehomogenization yields the corresponding statement for affine oriented maps, where the role of the lineality is played by the joint supporting subspace of the two convex sets.
\begin{lemma}[Alternatives for a separating affine oriented map]
\label{loc:alternatives_for_a_separating_affine_oriented_map.statement}
 Let $C, D \subseteq X$ be convex sets for $X$ a finite-dimensional
vector space. For any affine oriented map $f:X \to (Y,Y_+)$ such that
$f(D) \le 0 \le f(C)$, at least one of the following holds:
\begin{enumerate}
\item $f(D) < f(C)$,  making $f$ a certificate showing that $C \cap D = \varnothing$,
\item $\ker f = T_a(C,D)$,
\item $\exists \psi:X \to (\mathbb{R}, \mathbb{R}_+)$ an affine oriented functional, non-constant on $\ker f$, such that $(f \rtimes \psi)(D) \le 0 \le (f \rtimes \psi)(C)$.
\end{enumerate}
\end{lemma}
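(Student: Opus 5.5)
The plan is to dehomogenize \Cref{loc:alternatives_for_a_separating_linear_oriented_map.statement}. Let $K = \cone(C \times \{1\})$ and $G = \cone(D \times \{1\})$, and let $\psi$ be the homogenization of $f$ from \Cref{loc:oriented_linear_maps_are_in_bijection_with_oriented_affine_maps.statement}. By \Cref{loc:oriented_separation_of_convex_sets.statement} (points $1 \iff 2$) we have $\psi(G) \le \psi(K)$, so the conic lemma applies to $\phi := \psi$. I will translate each of its three alternatives back to $X$, using $\ker f = \gamma_1^{-1}(\ker \psi)$ (\Cref{loc:dehomogenization_of_kernels.statement}).

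\emph{Conic case 1.} Suppose $K \cap \ker\psi = \{0\}$; the case for $G$ is symmetric. Then $C \cap \ker f = \gamma_1^{-1}(K \cap \ker \psi) = \varnothing$. Take $c \in C$ and $d \in D$. We have $f(c) - f(d) \ge 0$. If this difference were $0$, then $0 \le f(c) = f(d) \le 0$, and antisymmetry would force $f(c) = 0$, i.e.\ $c \in \ker f$, which is impossible. Hence $f(d) < f(c)$ for all pairs. Moreover $C \cap D \subseteq \ker f$, because any $x \in C \cap D$ satisfies $0 \le f(x) \le 0$. So $C \cap D = \varnothing$ and alternative 1 holds.

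\emph{Conic case 2.} If $\ker \psi = \lin(K-G)$, then $\ker f = \gamma_1^{-1}(\lin(K-G)) = T_a(C,D)$ by \Cref{loc:affine_cross:lineality.statement} point 1.

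\emph{Conic case 3, with both cases 1 and 2 failing.} We obtain a functional $\psi'$ on $X \times \mathbb{R}$, nonzero on $\ker \psi$, with $(\psi \rtimes \psi')(G) \le (\psi \rtimes \psi')(K)$. Set $g := \psi' \circ \gamma_1$. The lexicographic product is computed componentwise, so $\psi \rtimes \psi'$ is the homogenization of $f \rtimes g$. Its codomain order is a pointed convex cone, so \Cref{loc:oriented_separation_of_convex_sets.statement} ($2 \implies 1$) applies and gives $(f \rtimes g)(D) \le 0 \le (f \rtimes g)(C)$.

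The main obstacle is showing that $g$ is \emph{non-constant} on $\ker f$. Knowing only that $\psi'$ is nonzero on $\ker \psi$ is not enough, since $g$ could be a nonzero constant there. Because case 1 fails, $\ker f$ meets both $C$ and $D$, so it is nonempty. I would split on whether $C \cap D$ is empty.

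\emph{Subcase $C \cap D \neq \varnothing$.} Pick $x_0 \in C \cap D$. Then $\gamma_1(x_0) \in K' \cap G'$, where $K' := K \cap \ker\psi$ and $G' := G \cap \ker\psi$. The separation $\psi'(G') \le \psi'(K')$, applied with the pairs $(0, \gamma_1(x_0))$ and $(\gamma_1(x_0), 0)$, forces $g(x_0) = \psi'(\gamma_1(x_0)) = 0$. By \Cref{loc:oriented_linear_maps_are_in_bijection_with_oriented_affine_maps.statement}, $\ker\psi = \{(x_0\lambda + u, \lambda) : u \in \ker A\}$, where $A$ is the linear part of $f$. If $g$ were constant on $\ker f = x_0 + \ker A$, then $g \equiv 0$ there, so $\psi'$ would vanish on $\gamma_0(\ker A)$ and on $\gamma_1(x_0)$, hence on all of $\ker\psi$. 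This contradicts the choice of $\psi'$.

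\emph{Subcase $C \cap D = \varnothing$.} I would bypass the conic lemma and work directly inside the flat $\ker f$. The sets $C \cap \ker f$ and $D \cap \ker f$ are nonempty, disjoint and convex in the finite-dimensional affine space $\ker f$. Finite-dimensional separation gives a linear functional, nonzero on $\ker A$, together with a level $b$ between the two images. Translating by $b$ yields an affine $g$, non-constant on $\ker f$, with $g \le 0$ on $D \cap \ker f$ and $g \ge 0$ on $C \cap \ker f$. Points outside $\ker f$ are already strictly ordered by the $f$ component, so the lexicographic product separates and alternative 3 holds.
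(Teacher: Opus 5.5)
Your proposal is correct, and its skeleton is the paper's. You homogenize $f$, invoke the conic alternatives lemma, and translate its three cases back to $X$. Your treatment of cases 1 and 2, and of the separation by $f \rtimes g$, matches the paper's.

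You differ only in the non-constancy step. The paper does not split on whether $C\cap D$ is empty. Since conic case 1 fails, both $C\cap\ker f$ and $D\cap\ker f$ are nonempty. So if $g$ were constant with value $c$ on $\ker f$, the inequalities $\psi'(G')\le 0\le\psi'(K')$, which you already extract from the lexicographic separation, give $c\ge 0$ at a point of $C\cap\ker f$ and $c\le 0$ at a point of $D\cap\ker f$. Hence $c=0$. Then $\psi'$ vanishes on $\gamma_1(\ker f)$, which spans $\ker\psi$, a contradiction. Your spanning argument therefore works with any base point of $\ker f$; a common point $x_0\in C\cap D$ is not needed to force the constant to zero.

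Your disjoint subcase discards $\psi'$ and separates $C\cap\ker f$ from $D\cap\ker f$ directly inside $\ker f$. This requires extending the functional from $\ker A$ to $X$, a step you leave implicit. The subcase is valid in finite dimensions and shows that case as ordinary hyperplane separation of disjoint sets. The cost is a second separation theorem on top of the one inside the conic lemma, so the argument is no longer a pure dehomogenization. Any relaxation of finite dimensionality via interiority conditions, as the paper suggests for the conic lemma, would then have to be arranged separately for this extra step.
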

The proof is a dehomogenization argument,
with the main difficulty in showing that the existence of a separating non-trivial functional
in the homogenization space translates to a non-trivial separating affine functional. We leave \hyperlink{loc:alternatives_for_a_separating_affine_oriented_map.proof}{this proof} to the appendix.
The above lemma specifies the behaviour of bilateral facial reduction at each step. The next result describes the full procedure, using the fact that
termination is guaranteed by the dimensions being finite.
\begin{theorem}[Bilateral facial reduction with lexicographic products]
\label{loc:bilateral_facial_reduction_with_lexicographic_products.statement}
Let $C, D \subseteq X$ be convex sets for $X$ a finite-dimensional vector space. 
Let $\phi_0: X \to (\{ 0 \},\{ 0 \})$. We run the following loop, increasing $m$
at each step.

For  $m \in \{1, 2, \ldots\}$:
\begin{itemize}
\item If $\phi_{m-1}$ satisfies points 1 or 2 of \Cref{loc:alternatives_for_a_separating_affine_oriented_map.statement}, return $\phi_{m-1}$.
\item Otherwise, let $\phi_m :=  \phi_{m-1} \rtimes \psi_m$, for the affine functional $\psi_m$ on $X$, non-constant on $\ker\phi_{m-1}$, guaranteed to exist by \Cref{loc:alternatives_for_a_separating_affine_oriented_map.statement} point 3.
\end{itemize}

This process terminates for some $m \le \dim(X)$, yielding an oriented
affine map separating $C$ from $D$ that satisfies \Cref{loc:alternatives_for_a_separating_affine_oriented_map.statement} points 1 or 2.
\end{theorem}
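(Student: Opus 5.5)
The proof runs by induction on $m$, with \Cref{loc:alternatives_for_a_separating_affine_oriented_map.statement} supplying each step and a dimension count on kernels ensuring termination.

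\emph{Invariant.} At every stage, $\phi_m$ is an oriented affine map with $\phi_m(D) \le 0 \le \phi_m(C)$.

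\emph{Base case.} The map $\phi_0$ is valued in the trivial ordered space $(\{0\},\{0\})$. It is therefore affine, it separates trivially since $\phi_0 \equiv 0$, and its kernel is all of $X$.

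\emph{Inductive step.} Suppose $\phi_{m-1}$ separates $C$ from $D$ but satisfies neither point 1 nor point 2. Then \Cref{loc:alternatives_for_a_separating_affine_oriented_map.statement} point 3 supplies an affine functional $\psi_m$ such that:
\begin{itemize}
\item $\psi_m$ is non-constant on $\ker\phi_{m-1}$;
\item $\phi_m = \phi_{m-1}\rtimes\psi_m$ again separates $C$ from $D$.
\end{itemize}
I would note that the codomain order of a lexicographic product of partial orders is again a pointed convex cone, so $\phi_m$ is a legitimate oriented affine map. Closure under addition and antisymmetry can be checked directly from \Cref{loc:lexicographic_product_of_oriented_maps.statement}. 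So the invariant persists, and whenever the loop returns, the returned map separates and satisfies point 1 or point 2, by the exit test.

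\emph{Termination.} The measure is the dimension of the kernels. Recall that $\ker(\phi_{m-1}\rtimes\psi_m) = \ker\phi_{m-1}\cap\ker\psi_m$.

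First, each $\ker\phi_{m-1}$ is non-empty as long as the loop continues. Indeed, if $\ker\phi_{m-1}$ were empty, then by \Cref{loc:oriented_separation_of_convex_sets.statement} (any $x_0\in C\cap D$ satisfies $\phi_{m-1}(x_0)=0$) we would have $C\cap D=\varnothing$. I then expect point 1 or point 2 to hold: when $C\cap D=\varnothing$, the proof of \Cref{loc:affine_cross:lineality.statement} gives $T_a=\varnothing=\ker\phi_{m-1}$, which is point 2. Hence each $\ker\phi_{m-1}$ is a non-empty affine flat.

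Second, the kernel drops in dimension at each step. Since $\psi_m$ is affine and non-constant on the non-empty flat $\ker\phi_{m-1}$, its zero set meets that flat in either the empty set or a flat of dimension exactly one less. So the dimension of $\ker\phi_m$ strictly decreases, with the empty set counted as dimension $-1$.

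Starting from $\dim\ker\phi_0=\dim X$, after at most $\dim X$ extensions the kernel has dimension at most $0$. It remains to check that the loop cannot extend past that point. If the kernel is empty, the previous paragraph shows the exit test fires. If the kernel is a single point, no affine functional is non-constant on it, so point 3 is unavailable. By \Cref{loc:alternatives_for_a_separating_affine_oriented_map.statement}, point 1 or point 2 must then hold, and the loop returns. This gives $m\le\dim X$, up to an off-by-one in how $m$ is indexed at return, which I would fix by counting extensions.

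\emph{Main obstacle.} The delicate step is the degenerate bookkeeping: showing that an empty kernel forces point 1 or point 2, and that the loop can never reach a stage where a step is required but unavailable. This relies on the convention $T_a(C,D)=\varnothing$ when $C\cap D=\varnothing$, together with the exhaustiveness of the trichotomy in \Cref{loc:alternatives_for_a_separating_affine_oriented_map.statement}. I would handle it first, as a separate case split on whether $C\cap D$ is empty.
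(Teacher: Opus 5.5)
Your proof is correct and follows the paper's argument: separation is carried along inductively by point 3 of the alternatives lemma, and termination comes from each non-constant affine $\psi_m$ cutting the current kernel flat down by exactly one dimension. Your extra bookkeeping makes explicit what the paper leaves implicit, namely the forced exit once the kernel is a single point, and that the bound really counts the at most $\dim X$ extensions (the index of the returned $\phi_{m-1}$) rather than the loop counter at return. The empty-kernel case you guard against is actually vacuous, since a non-constant affine functional on a non-empty flat always vanishes somewhere on it.
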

\begin{proof}[\hypertarget{loc:bilateral_facial_reduction_with_lexicographic_products.proof}{}Proof of \Cref{loc:bilateral_facial_reduction_with_lexicographic_products.statement}]

That $\phi_{m-1}$ separates $C$ and $D$ follows by induction: 
$\phi_0$ is trivially separating, and any generated map preserves this
property by \Cref{loc:alternatives_for_a_separating_affine_oriented_map.statement} point 3. 

That termination must occur follows from the fact that the dimension of the kernel
decreases by $1$ at each step: $\ker\phi_m = \ker\phi_{m-1} \cap \ker\psi$, and 
the functional in point 3 of \Cref{loc:alternatives_for_a_separating_affine_oriented_map.statement} is guaranteed to be non-constant on $\ker\phi_{m-1}$, so its zero set there is a hyperplane of that flat.
\end{proof}
We next show that the characterization of faces by lexicographic maps is a corollary of
bilateral facial reduction.
\begin{corollary}[Lexicographic characterization of faces]
\label{loc:lexicographic_characterization_of_faces.statement}
Let $X$ be a finite-dimensional vector space and $C \subseteq X$ a convex set. For any non-empty face $F \triangleleft C$ there is an affine lexicographic map $\phi:X \to (Y, Y_+)$ such that $\phi(C) \geq 0$ and $F = C \cap \ker\phi$.
\end{corollary}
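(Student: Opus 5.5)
We will run bilateral facial reduction (\Cref{loc:bilateral_facial_reduction_with_lexicographic_products.statement}) on the pair $(C, F)$, with $F$ playing the role of $D$. Since $F \neq \varnothing$ and $F \subseteq C$, we have $C \cap F = F \neq \varnothing$. Hence no separating map can satisfy point 1 of \Cref{loc:alternatives_for_a_separating_affine_oriented_map.statement}: such an $f$ would give $f(F) < f(C)$, while any $x \in F$ satisfies $f(x) \le 0 \le f(x)$, i.e.\ $f(x) = 0$, so $f(x) < f(x)$ fails. The loop therefore terminates with some $\phi := \phi_m = \phi_0 \rtimes \psi_1 \rtimes \cdots \rtimes \psi_m$ that separates $C$ from $F$ and satisfies $\ker \phi = T_a(C, F)$.

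Separation gives $\phi(F) \le 0 \le \phi(C)$. In particular $\phi(C) \ge 0$, and $F \subseteq C$ forces $\phi(F) = 0$. Intersecting with $C$ and using \Cref{loc:characterization_of_the_generated_face_of_a_convex_set.remark} together with \Cref{loc:affine_cross:lineality.statement} point 2, we get
\begin{equation*}
C \cap \ker\phi = C \cap T_a(C,F) = C \cap \bigl(\lin\cone(C-F) + F\bigr) = F_C(F).
\end{equation*}
Since $F$ is a face containing $F$, it is the smallest such face, so $F_C(F) = F$ by \Cref{loc:characterization_of_the_generated_face_of_a_convex_set.statement} point 1. Alternatively, \Cref{loc:the_joint_supporting_subspace_is_the_generated_supporting_flat.statement} identifies $T_a(C,F)$ as the minimum supporting flat containing $F$, and its trace on $C$ is then $F$.

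It remains to check that $\phi$ is a lexicographic map in the sense of \cite{martinez-legaz_LEXICOGRAPHICALCHARACTERIZATIONFACES_}. The factor $\phi_0$ has codomain $\{0\}$, so it contributes nothing; we drop it, which is harmless because $\{0\} \times Y$ with the product order is canonically $Y$. Each $\psi_i$ is an affine oriented functional into $(\mathbb{R}, \mathbb{R}_+)$ that is non-constant on $\ker\phi_{i-1}$. By associativity, $\phi$ is therefore an iterated lexicographic product of real affine functionals, and its codomain is $\mathbb{R}^m$ with the lexicographic cone. The termination bound $m \le \dim X$ matches the row bound on $A$. Flipping signs to pass between argmin and argmax recovers the form $F = \argmax_{x\in C} Ax$.

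The main obstacle is the degenerate case $m = 0$, where $\phi_0$ already satisfies point 2, i.e.\ $T_a(C,F) = X$. Here $F = C$, and we can take the zero functional into $(\mathbb{R},\mathbb{R}_+)$, which is a trivial lexicographic map. We also have to confirm that \Cref{loc:alternatives_for_a_separating_affine_oriented_map.statement} is applied with exactly the convention $f(D) \le 0 \le f(C)$, taking $D = F$.
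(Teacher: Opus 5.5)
Your proposal is correct and follows the paper's proof. You run bilateral facial reduction on $(C,F)$, rule out point 1 of \Cref{loc:alternatives_for_a_separating_affine_oriented_map.statement} because $C \cap F = F \neq \varnothing$, and conclude $C \cap \ker\phi = C \cap T_a(C,F) = F_C(F) = F$ via \Cref{loc:characterization_of_the_generated_face_of_a_convex_set.remark}. One correction: the case where the loop immediately returns $\phi_0$ is not an obstacle, since $\phi_0$ (the empty lexicographic product, with $\ker\phi_0 = X = T_a(C,F)$) satisfies the conclusion by the same argument, whereas the zero functional you substitute would violate the paper's requirement that each factor of a lexicographic map be non-trivial on the preceding kernel.
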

\begin{proof}[\hypertarget{loc:lexicographic_characterization_of_faces.proof}{}Proof of \Cref{loc:lexicographic_characterization_of_faces.statement}]

Apply \Cref{loc:bilateral_facial_reduction_with_lexicographic_products.statement} with $D = F$. The resulting map cannot satisfy point 1 of \Cref{loc:alternatives_for_a_separating_affine_oriented_map.statement}, since $F \subseteq C$ is nonempty. Then it must satisfy point 2, so
we obtain an affine lexicographic map $\phi$ with $\phi(F) \le 0 \le \phi(C)$
such that $\ker \phi = \lin\cone(C-F) + F = T_a(C,F)$. Therefore, $\ker \phi \cap C = F_C(F) = F$
by \Cref{loc:characterization_of_the_generated_face_of_a_convex_set.remark}, and because $F$ is a face of $C$.
\end{proof}
\section{Conclusion}
\label{loc:body.conclusion}
We characterized many types of objects in this work, yet they are all derived from oriented linear maps $\psi:W \to (Y, Y_+)$.
Dehomogenization, formalized as precomposition with $\gamma_1$, yields the affine oriented maps. Orthogonally, we apply restrictions to the oriented maps: positivity on one convex cone, and in the case of separation, an additional requirement of negativity on a second convex cone. From these maps, and subject to these restrictions, we obtain preorders, supporting subspaces and faces.
These other objects are all obtained ``downstream" of oriented maps: for an
oriented map $\psi$ positive on a convex cone $K$,
\begin{equation*}
\psi \mapsto (P := \psi^{-1}(Y_+)) \mapsto (U := \lin(P)) \mapsto (F := U \cap K).
\end{equation*}
These are the operations detailed in \Cref{loc:body.positivity_minimization_and_faces}.

\Cref{loc:body.lattice_minima} can be understood as reversing this process:
\begin{equation*}
F \mapsto (U := \Span(F)) \mapsto (P := K - U) \mapsto \pi_P,
\end{equation*}
where in the last step the quotient map $\pi_P$ is obtained from $P$ as
in \Cref{loc:preorders_are_characterized_by_oriented_maps.statement}.
These operations go ``upstream" by building the ``smallest" upstream element
that yields the downstream element. Generated elements are obtained
via ``round-trips". For example, to obtain the generated supporting subspace
from a given subspace, we build the minimum upstream object and then go back
downstream. For a subspace $L \subseteq W$,
\begin{equation*}
L \mapsto (P := K - L) \mapsto \lin(P),
\end{equation*}
which gives the formula $H_K(L) := \lin(K-L)$ of \Cref{loc:separation_reduces_to_containment.statement}.

\paragraph{Summary and outlook}

\label{loc:body.conclusion.summary_and_outlook}
Motivated by the understanding of convex sets as slices of convex cones in the homogenization space, we introduced affine oriented maps on convex sets, from which the notions of faces, supporting subspaces, and generalized separation emerge. By doing this, we unify exposed faces, hyperplane separation, lexicographically exposed faces, faces generated at a point, the joint supporting subspace, and bilateral facial reduction all under one framework.
Downstream of our framework, three geometric consequences follow easily. We show that any face of an intersection of two convex sets is an intersection of faces of the individual convex sets (\Cref{loc:face_of_intersections_is_the_intersection_of_faces.statement}), which resolves the open problem of \cite{weisNoteFacesConvex2025}. We also show that any face of a difference of convex sets is the difference of faces (\Cref{loc:decomposition_of_faces_of_the_difference_of_convex_sets.statement}), and this without compactness, exposedness, or finite dimensions. Finally, our framework provides a natural formulation and simple proofs for bilateral facial reduction (\Cref{loc:bilateral_facial_reduction_with_lexicographic_products.statement}), from which
the lexicographic characterization of faces follows as a corollary (\Cref{loc:lexicographic_characterization_of_faces.statement}).

We believe that the use of oriented maps provides a novel, powerful, motivated, yet simple unifying method in the analysis of convex sets. We believe it has the potential to form a valuable foundation for a qualification-free theory of convex analysis, including results such as those introduced in~\cite{scottBilateralFacialReduction2026}.
\section{Acknowledgements}
\label{loc:body.acknowledgements}
The author was financially supported by the University of British Columbia during the making of this manuscript. 
AI tools (Claude Fable 5, Opus 5 and Grok 4.6) were used when making this work, helping with ideation, proof writing, searching the literature, and checking for errors. All writing was edited manually before submission, and almost all insights,
proofs, and paper structure originated from the author.
The author has no competing interests to declare, and no data or code were created for this work.
\printbibliography
\appendix
\section{Appendix}
\begin{proof}[\hypertarget{loc:characterization_of_the_lineality.proof}{}Proof of \Cref{loc:characterization_of_the_lineality.statement}]

$1 = 2$: Immediate.

$3 = 1$: For any subspace $U \subseteq K$, any point in $U$ has an additive inverse: $\{ u, -u \}  \in U \subseteq K$ therefore $u \in \lin(K)$, which shows that $U \subseteq \lin(K)$. That $\lin(K)$ is a subspace follows from similarity with $0$ being preserved by vector space operations.

$2 = 4$: With $\lin(K)$ defined with point $2$, $\lin(K)$ is a supporting subspace of $K$ by \Cref{loc:characterization_of_supporting_subspaces.statement} point 4 considered with the preorder $K$. Containment of $\lin(K)$ (as defined in point 2) in an arbitrary supporting subspace $U$ of $K$ follows from the characterization of supporting subspaces \Cref{loc:characterization_of_supporting_subspaces.statement} point 2, by taking $z = 0$, since $0$ is contained in any supporting subspace.

$2 = 5$: Since $\lin(K)$ is a supporting subspace of $K$ by point 4, and $\lin(K) \subseteq K$ by point 2, we have $\lin(K) = K \cap \lin(K) \triangleleft K$ by \Cref{loc:characterization_of_supporting_subspaces.statement} point 5. Containment of $\lin(K)$ in an arbitrary face $F$ of $K$ follows from \Cref{loc:characterization_of_the_faces_of_convex_cones.statement} point 2, again by taking $z = 0$, since $0$ is contained in any face.
\end{proof}
\subsection{Dehomogenization proofs}
\label{loc:appendix.dehomogenization_proofs}
\begin{proof}[\hypertarget{loc:dehomogenization_is_a_bijection_between_sets_and_homogenization_cones.proof}{}Proof of \Cref{loc:dehomogenization_is_a_bijection_between_sets_and_homogenization_cones.statement}]

We show that $\gamma_1^{-1}$ and $\cone\circ\gamma_1$ are mutually inverse, by checking each composition.

Let $K$ be a homogenization cone. Every $k \in K$ is of the form $\lambda\gamma_1(c)$ for some $c \in \gamma_1^{-1}(K)$:
the origin with $\lambda = 0$, and otherwise $k = (x,\lambda)$ with $\lambda > 0$ and $c = x/\lambda$,
since $\gamma_1(c) = k/\lambda \in K$. Thus $K \subseteq \cone\circ\gamma_1(\gamma_1^{-1}(K))$. The reverse inclusion
holds because $\gamma_1(\gamma_1^{-1}(K)) \subseteq K$ and $\cone(K) = K$, so
\begin{equation*}
K = \cone\circ\gamma_1(\gamma_1^{-1}(K)).
\end{equation*}

Conversely, let $C \subseteq X$ and $K := \cone\circ\gamma_1(C)$. The inclusion $C \subseteq \gamma_1^{-1}(K)$ is $\gamma_1(C) \subseteq K$. For the reverse, any point of $K$ off $\gamma_1(C)$ has height other than one, so $\gamma_1^{-1}$ sees only $\gamma_1(C)$:
\begin{equation*}
\gamma_1^{-1}(\cone\circ\gamma_1(C)) = C.
\end{equation*}
\end{proof}
\begin{proof}[\hypertarget{loc:span_dehomogenizes_to_the_affine_hull.proof}{}Proof of \Cref{loc:span_dehomogenizes_to_the_affine_hull.statement}]

The subspaces are the kernels of the linear maps and the non-empty flats the kernels of the affine maps, with $\ker \psi \supseteq H \iff \psi(H) = 0$. Each hull, being the smallest such set containing its argument, is therefore an intersection of kernels,
\begin{equation*}
\Span H = \bigcap_{\psi(H) = 0} \ker \psi, \qquad \aff S = \bigcap_{f(S) = 0} \ker f.
\end{equation*}

Being a preimage, $\gamma_1^{-1}$ commutes with intersections, and each term dehomogenizes on its own, $\gamma_1^{-1}(\ker \psi) = \ker f$ by \Cref{loc:dehomogenization_of_kernels.statement}. Hence
\begin{align*}
\gamma_1^{-1}(\Span H) = \bigcap_{\psi(H) = 0} \gamma_1^{-1}(\ker \psi)
= \bigcap_{\psi(H) = 0} \ker f
= \bigcap_{f(S) = 0} \ker f = \aff S,
\end{align*}
where the third equality reindexes along the bijection $\psi \mapsto f$ onto the affine maps. It is valid because $\psi(H) = 0 \iff f(S) = 0$ by \Cref{loc:preservation_of_orientation_in_dehomogenization.statement} applied to $\psi$ and to $-\psi$, which gives $\psi(H) \ge 0 \iff f(S) \ge 0$ and $\psi(H) \le 0 \iff f(S) \le 0$.
\end{proof}
\begin{proof}[\hypertarget{loc:bijection_of_faces_containing_a_subset.proof}{}Proof of \Cref{loc:bijection_of_faces_containing_a_subset.statement}]

The cone $T$ and faces of $K$ are all homogenization cones (i.e., contained in $(X  \times \mathbb{R}_{++}) \cup \{ 0 \}$) because they are subcones of $K$. Therefore, they are in the domain of $\gamma_1^{-1}$ as a lattice isomorphism from convex cones contained in $(X \times \mathbb{R}_{++})\cup\{0\}$ to convex sets as guaranteed by \Cref{loc:dehomogenization_is_a_lattice_isomorphism_between_convex_sets_and_homogenization_cones.statement}. From this, we have that for any face $F$ of $K$,
\begin{equation*}
T \subseteq F \iff S \subseteq \gamma_1^{-1}(F),
\end{equation*}
allowing for a restriction of $\gamma_1^{-1}$ to an isomorphism between only those sets containing $T$ and $S$ respectively.
\Cref{loc:dehomogenization_is_a_bijection_on_the_faces.statement} states that $\gamma_1^{-1}$ 
also restricts to faces of $K$ and $C$; applying both restrictions yields the result.
\end{proof}
\begin{proof}[\hypertarget{loc:differences_in_the_lifting.proof}{}Proof of \Cref{loc:differences_in_the_lifting.statement}]

$(\supseteq)$: Note that $C \times \{1\} \subseteq K$
and $D \times \{1\} \subseteq G$, so we have that 
\begin{equation*}
(C-D) \times \{0\} = C \times \{1\} - D \times \{1\} \subseteq K-G.
\end{equation*}
And since $K-G$ is a cone,
$\cone(C-D) \times \{0\} \subseteq K-G$.

$(\subseteq)$: Any elements $k \in K$, $g \in G$
such that $k-g \in X \times \{0\}$ must be of the form
$k = (\lambda c, \lambda)$, $g =  (\lambda d,  \lambda)$ for $c \in C$,
$d \in D$, and
$\lambda \ge 0$. Then
\begin{equation*}
k-g =  (\lambda(c-d), 0) \in \cone(C-D) \times \{0\}.
\end{equation*}
\end{proof}
\begin{proof}[\hypertarget{loc:minima_dehomogenize_to_the_induced_order.proof}{}Proof of \Cref{loc:minima_dehomogenize_to_the_induced_order.statement}]

For $c, c' \in C$, the difference of the lifts is a horizon vector, $\gamma_1(c') - \gamma_1(c) = \gamma_0(c'-c)$. Therefore 
$\gamma_1(c')-\gamma_1(c)  \in P  \iff \gamma_0(c' - c)  \in P$, i.e., $\gamma_1(c) \leq_{P} \gamma_1(c') \iff c \leq_{\gamma_0^{-1}(P)} c'$.

$(\subseteq)$: If $\gamma_1(c) \le_{P} K$ then in particular $\gamma_1(c) \le_{P} \gamma_1(c')$ for all $c' \in C$, and therefore $c \le_{\gamma_0^{-1}(P)} C$.

$(\supseteq)$: Let $x_0 \in \gamma_1^{-1}(\min_P K)$ and $c \in \min_{\gamma_0^{-1}(P)}C$. Then since $x_{0} \in C$, we have $c \le_{\gamma_0^{-1}(P)} x_0$, so $\gamma_1(c) \le_{P} \gamma_1(x_0)$, and $\gamma_1(x_0) \le_{P} K$. Transitivity gives $\gamma_1(c) \in \min_P K$.
\end{proof}
\begin{proof}[\hypertarget{loc:the_joint_supporting_subspace_is_the_generated_supporting_flat.proof}{}Proof of \Cref{loc:the_joint_supporting_subspace_is_the_generated_supporting_flat.statement}]

The homogenization $G := \cone(\gamma_1(S))$ is a subcone of $K := \cone(\gamma_1(C))$, so \Cref{loc:separation_reduces_to_containment.statement} point 2 makes $\lin(K-G)$ the minimum supporting subspace of $K$ containing $G$. We dehomogenize that minimality.

The subcone $G$ dehomogenizes back to $S$, by \Cref{loc:dehomogenization_is_a_lattice_isomorphism_between_convex_sets_and_homogenization_cones.statement}, and it is generated by $\gamma_1(S)$, so a subspace $U \subseteq X \times \mathbb{R}$ satisfies
\begin{equation*}
G \subseteq U \iff S \subseteq \gamma_1^{-1}(U).
\end{equation*}
By \Cref{loc:dehomogenization_of_kernels.statement}, $\gamma_1^{-1}$ carries the supporting subspaces of $K$ surjectively onto the supporting flats of $C$, so by the displayed equivalence it carries those containing $G$ surjectively onto those containing $S$. Preserving inclusion, it carries minimum to minimum: the minimum supporting flat of $C$ containing $S$ is $\gamma_1^{-1}(\lin(K-G))$, which is $T_a(C,S)$ by \Cref{loc:affine_cross:lineality.statement} point 1.
\end{proof}
\begin{proof}[\hypertarget{loc:alternatives_for_a_separating_affine_oriented_map.proof}{}Proof of \Cref{loc:alternatives_for_a_separating_affine_oriented_map.statement}]

Let $K := \cone(C \times \{1\})$ and $G := \cone(D \times \{1\})$ in $X \times \mathbb{R}$, and let $\phi:X \times \mathbb{R} \to (Y, Y_+)$ be the oriented linear map associated with $f$ by \Cref{loc:oriented_linear_maps_are_in_bijection_with_oriented_affine_maps.statement}, namely $\phi(x, \lambda) = f(x) - (1-\lambda)f(0)$. By construction $\phi(G) \le 0 \le \phi(K)$.

Apply \Cref{loc:alternatives_for_a_separating_linear_oriented_map.statement} to $\phi$. The cases dehomogenize as follows.
For the first case of \Cref{loc:alternatives_for_a_separating_linear_oriented_map.statement}
dehomogenize to the first case of \Cref{loc:alternatives_for_a_separating_affine_oriented_map.statement}: if $K \cap \ker(\phi) = \{0\}$, then $C \cap \ker f=\varnothing$, and similarly for $G$ and $D$. With $f(D) \leq f(C)$, this is enough to guarantee that $f(D) < f(C)$, because given two points $c  \in C, d  \in D$
such that $f(c)=f(d)$ we would have $0 \leq f(c) =f(d) \leq 0$, so $c,d  \in \ker f$, contradicting the fact that at least one of $K \cap \ker \phi$, $G \cap \ker \phi$ is empty.

For the second case, $\ker f = \gamma_1^{-1}(\ker \phi) = \gamma_1^{-1}(\lin(K-G)) = \lin\cone(C-D) + C \cap D$ by \Cref{loc:affine_cross:lineality.statement} points 1 and 2.

For the third case, the bijection of \Cref{loc:oriented_linear_maps_are_in_bijection_with_oriented_affine_maps.statement} dehomogenizes the linear $\psi_h:X \times \mathbb{R} \to (\mathbb{R}, \mathbb{R}_+)$ to the affine $\psi:X \to (\mathbb{R}, \mathbb{R}_+)$ given by $\psi(x) := \psi_h(x, 1)$. Pulling back by $\gamma_1$ acts coordinate-wise, so it sends $\phi \rtimes \psi_h$ to $f \rtimes \psi$, transferring the separation to $(f \rtimes \psi)(D) \le 0 \le (f \rtimes \psi)(C)$.

Still in the third case, we argue that $\psi$ is not constant. Suppose case 1 fails, so that $K \cap \ker\phi = \cone((C \cap \ker f) \times \{1\})$ and $G \cap \ker\phi = \cone((D \cap \ker f) \times \{1\})$ are non-trivial homogenization cones. If $\psi$ were constant on $\ker f$ with value $c$, then on these cones $\psi_h(\lambda x, \lambda) = \lambda \psi_h(x, 1) = \lambda \psi(x) = \lambda c$ for $x \in C \cap \ker f$ or $D \cap \ker f$, so $\psi_h(K \cap \ker\phi) = \psi_h(G \cap \ker\phi) = \{\lambda c : \lambda \ge 0\}$. The separation $\psi_h(G \cap \ker\phi) \le 0 \le \psi_h(K \cap \ker\phi)$ forces this common set into $\{0\}$, so $c = 0$; combined with $\psi_h$ vanishing on $\{(x_1 - x_2, 0) : x_1, x_2 \in \ker f\}$ (by linearity of $\psi_h$ and $\psi(x_1) = \psi(x_2) = 0$), $\psi_h$ vanishes on a generating set of $\ker\phi$, contradicting $\psi_h \neq 0$ on $\ker\phi$. Hence $\psi$ is non-constant on $\ker f$.
\end{proof}
\end{document}

%% file: header.tex
\usepackage{amsmath}
\usepackage{amsthm}
\usepackage{aliascnt}
\usepackage{biblatex}
\usepackage{graphicx}
\usepackage{tikz-cd}

\theoremstyle{plain}
\newtheorem{theorem}{Theorem}[section]

\newaliascnt{corollary}{theorem}
\newtheorem{corollary}[corollary]{Corollary}
\aliascntresetthe{corollary}

\newaliascnt{lemma}{theorem}
\newtheorem{lemma}[lemma]{Lemma}
\aliascntresetthe{lemma}

\newaliascnt{proposition}{theorem}
\newtheorem{proposition}[proposition]{Proposition}
\aliascntresetthe{proposition}

\theoremstyle{definition}
\newaliascnt{definition}{theorem}
\newtheorem{definition}[definition]{Definition}
\aliascntresetthe{definition}

\theoremstyle{remark}
\newaliascnt{remark}{theorem}
\newtheorem{remark}[remark]{Remark}
\aliascntresetthe{remark}

\newaliascnt{fact}{theorem}

\aliascntresetthe{fact}

\usepackage{hyperref}
\usepackage{cleveref}

\crefname{corollary}{Corollary}{Corollaries}
\Crefname{corollary}{Corollary}{Corollaries}
\crefname{lemma}{Lemma}{Lemmas}
\Crefname{lemma}{Lemma}{Lemmas}
\crefname{proposition}{Proposition}{Propositions}
\Crefname{proposition}{Proposition}{Propositions}
\crefname{definition}{Definition}{Definitions}
\Crefname{definition}{Definition}{Definitions}
\crefname{remark}{Remark}{Remarks}
\Crefname{remark}{Remark}{Remarks}
\crefname{fact}{Fact}{Facts}